\newcommand {\ignore}[1]  {}
\newcommand{\sm}{\smallsetminus}
\newif\ifdraft\drafttrue
\long\def\combarak#1{\ifdraft{\tt #1 }\else\ignorespaces\fi}
\font\sn = cmssi8 scaled \magstep0
\newcommand\name[1]{\label{#1}{\ifdraft{\sn [#1]}\else\ignorespaces\fi}}
\newcommand\eq[2]{{\ifdraft{\ \tt [#1]}\else\ignorespaces\fi}\begin{equation}\label{#1}{#2}\end{equation}}
\newcommand{\supp}{{\mathrm{supp}}}
\newcommand{\FF}{{\mathcal{F}}}
\newcommand{\GG}{{\mathcal{G}}}
\newcommand{\SSS}{{\mathcal {S}}}
\newcommand{\REL}{{\mathrm {REL}}}
\newcommand{\dist}{{\mathrm{dist}}}
\newcommand {\equ}[1]     {\eqref{#1}}
\newcommand{\R}{{\mathbb{R}}}
\newcommand{\Z}{{\mathbb{Z}}}
\newcommand{\HH}{{\mathcal{H}}}
\newcommand{\E}{{\mathbf{e}}}
\newcommand{\br}{{\mathbf{r}}}
\newcommand{\N}{{\mathbb{N}}}
\newcommand{\xx}{{\mathbf{x}}}
\newcommand{\yy}{{\mathbf{y}}}
\newcommand{\A}{{\bf{a}}}
\newcommand{\SL}{\operatorname{SL}}
\newcommand{\Res}{\operatorname{Res}}
\newcommand{\hol}{{\mathrm{hol}}}
\newcommand{\UU}{{\mathcal{U}}}
\newcommand{\interior}{{\rm int}}
\newcommand{\EE}{{\mathcal{E}}}
\newcommand{\QQ}{{\mathcal Q}}
\newcommand{\PP}{{\mathcal P}}
\newcommand{\LL}{{\mathcal L}}
\newcommand{\FFF}{{\mathcal F}}
\newcommand{\pp}{{\mathbf p}}
\newcommand{\bq}{{\mathbf q}}
\newcommand{\dev}{{\mathrm {dev}}}
\newcommand{\CC}{{\mathcal{C}}}
\newcommand{\vre}{\varepsilon}
\newcommand{\IE}{\mathcal{T}}
\newcommand{\B}{\mathbf{b}}
\newcommand{\cc}{\mathbf{c}}
\newcommand{\BB}{\mathcal{B}}
\newcommand{\BBB}{\mathbb{B}}
\newcommand{\AAA}{\mathbb{A}}
\newcommand\MM{{\mathcal M}}
\newcommand\PMF{{{\mathcal P}\kern-2pt\MM\FFF}}
\newcommand\PML{{{\mathcal P}\kern-2pt\MM\LL}}
\newcommand\til{\widetilde}
\newcommand\Mod{\operatorname{Mod}}
\newcommand\hhat{\widehat}
\long\def\@savemarbox#1#2{\global\setbox#1\vtop{\hsize\marginparwidth 
%%%%%  \@parboxrestore #2}}
  \@parboxrestore\tiny\raggedright #2}}
\def\Empty{}
\newcommand\oplabel[1]{
  \def\OpArg{#1} \ifx \OpArg\Empty {} \else
        \label{#1}
  \fi}
\newlength{\saveu}
\newtheorem{thm}{Theorem}[section]
\newtheorem{lem}[thm]{Lemma}
\newtheorem{prop}[thm]{Proposition}
\newtheorem{Def}[thm]{Definition}
\newtheorem{cor}[thm]{Corollary}
\newtheorem{remark}[thm]{Remark}
\newtheorem{conj}{Conjecture}
\numberwithin{figure}{section}
\begin{document}

\title[Mahler's question for interval exchanges]{Cohomology classes
represented by
measured foliations, and Mahler's
question for interval exchanges}
\author{Yair Minsky}
\address{Yale University, New Haven, CT {\tt yair.minsky@yale.edu}}
\author{Barak Weiss}
\address{Ben Gurion University, Be'er Sheva, Israel 84105
{\tt barakw@math.bgu.ac.il}}
\date{\today}

\begin{abstract}
A translation surface on $(S, \Sigma)$ gives rise to two
transverse measured foliations $\FF, \GG$ on $S$ with singularities in
$\Sigma$, and by integration, to a pair of cohomology
classes $[\FF], \, [\GG] \in H^1(S, \Sigma; \R)$. Given a 
measured foliation $\FF$, we characterize the set of cohomology
classes $\B$ for which there is a measured foliation $\GG$ as above
with $\B = [\GG]$. This extends previous results of Thurston
\cite{Thurston stretch maps} and
Sullivan \cite{Sullivan}. 

We apply this to two problems: unique ergodicity of interval exchanges
and flows on the moduli space of translation surfaces. 
For a fixed permutation $\sigma \in \mathcal{S}_d$, the space $\R^d_+$
parametrizes the interval exchanges on $d$ 
intervals with permutation $\sigma$. We describe lines
$\ell$ in $\R^d_+$ such 
that almost every point in $\ell$ is uniquely ergodic. We also show that for
$\sigma(i) = d+1-i$, for almost every $s>0$, 
the interval exchange transformation corresponding to $\sigma$ and
$(s, s^2, \ldots, s^d)$ is uniquely ergodic. 
%This is an analogue
%of Mahler's conjecture (Sprindzhuk's theorem) in diophantine
%approximation. 
%
As another application we show that when $k=|\Sigma|
\geq 2,$ the 
operation of `moving the singularities horizontally' is globally
well-defined. %This operation has been studied 
%in some restricted settings in
%connection with the horocycle flow \cite{EMM, CW}. 
We prove that there is a well-defined action of the group $B \ltimes
\R^{k-1}$ on the set of translation surfaces of type $(S, \Sigma)$
without horizontal saddle connections. Here $B \subset \SL(2,\R)$ is the
subgroup of upper triangular matrices. 
%Given an irreducible permutation $\sigma$ on $d$ symbols, it was
%proved by Masur and Veech 
%that for $\mu$-a.e. $\A$ in $\R^d_+$, the interval exchange
%transformation determined by $\A$ is uniquely ergodic. Here
%$\mu$ is $d$-dimensional Lebesgue measure. We
%describe other measures $\mu$ for which the same statement holds. For
%each $\A \in \R^d_+$ we describe an explicit open set
%$\til{ \mathcal{C}}_{\A}$ of directions such that for $\B \in
%\til {\mathcal{C}}_{\A}$, and for $\mu$-a.e. $s, \ \A+s\B$ is
%uniquely ergodic. Here $\mu$ is any absolutely decaying Federer
%measure on the real line, e.g. $1$-dimensional Lebesgue measure or the standard
%coin-tossing measure on the Cantor middle thirds set. 
%More generally, if 
% $\A$ is a $C^2$ curve in $\R^d_+$ 
%with $\A'(s) \in \mathcal{C}_{\A(s)}$ for all $s$, 
%and $\mu$ is decaying Federer, then for $\mu$-a.e. $s, \
%\A(s)$ is uniquely ergodic. The cones $\CC_{\A}$ are generically
%half-spaces, and contain a `universal' open cone $\CC$. 
\end{abstract}

\maketitle
\section{Introduction}
\subsection{Motivating questions and nonsensical pictures}
To introduce the problems discussed in this paper, consider some
pictures. Suppose that $\A = (a_1, \ldots, a_d)$
is a vector with positive entries, $I = \left[0, \sum a_i \right)$ is an interval,
$\sigma$ is a permutation on $d$ symbols, and $\IE=\IE_{\sigma}(\A): I \to I$ is the
interval exchange obtained by cutting up $I$ into segments of lengths
$a_i$ and permuting them according to $\sigma$. A fruitful technique
for studying the dynamical properties of $\IE$ is to consider it as
the return map to a transverse segment along the vertical foliation in
a translation surface, i.e. a union of polygons with edges glued
pairwise by translations. See Figure \ref{figure: Masur} for an example with one polygon;
note that the interval exchange determines the 
horizontal coordinates of vertices, but there are many
possible choices of the vertical coordinates.

\begin{figure}[htp] 
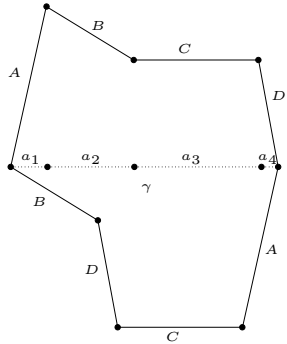
\caption{Masur's construction: an
interval exchange embedded in a one-polygon translation surface.}\name{figure: Masur}
\end{figure}

Given a translation surface $q$ with a transversal, one may deform it by
applying the horocycle flow, i.e. deforming the polygon with the linear map 
\eq{eq: horocycle matrix}{
h_s = \left( \begin{matrix} 1 & s \\ 0 & 1 \end{matrix} \right).
}
The return map to a transversal in $h_sq$ depends on $s$, so we
get a one-parameter family $\IE_s$ of interval exchange
transformations (Figure \ref{figure: horocycle polygon}). For
sufficiently small $s$, one has $\IE_s = \IE_\sigma(\A(s)),$ where
$\A(s) = \A + s\B$ is a line segment, whose derivative $\B = (b_1,
\ldots, b_d)$ is determined
by the {\em heights} of the vertices of the polygon. We will consider
an inverse problem: given a line segment $\A(s) = \A+s\B$, does there exist a
translation surface $q$ such that for all sufficiently small $s$,
$\IE_\sigma(\A(s))$ is the return map along vertical leaves to a
transverse segment in $h_sq$? Attempting to interpret this question with pictures,
we see that some choices of $\B$ lead to a translation
surface while others lead to nonsensical pictures -- see Figure
\ref{figure: nonsense}. The solution to this problem is given by 
Theorem \ref{thm: sullivan2}.

\begin{figure}[htp] 
\includegraphics{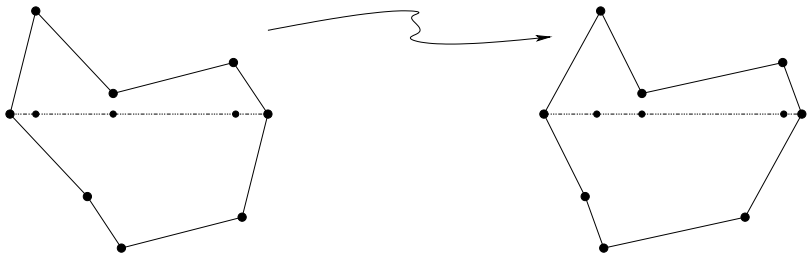}
\caption{The horocycle action gives a linear one parameter family of interval
exchanges} \name{figure: horocycle polygon}
\end{figure}

\begin{figure}[htp] 
\centerline{\hfill \includegraphics[scale=0.8]{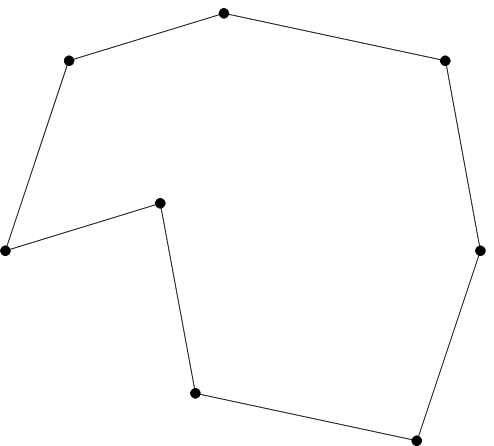}  \hfill
\includegraphics[scale=0.8]{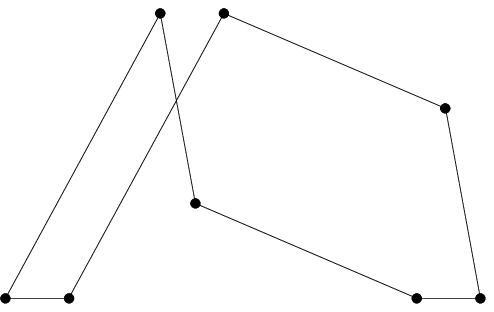}   \hfill}
\caption{The choice $\B = (2, 1, -1, -2)$
(left) gives a translation surface but 
what about $\B=(0, 3, -1, -2)$?} \name{figure: nonsense}
\end{figure}

Now consider a translation surface $q$ with two 
singularities. We may consider the operation of moving one singularity
horizontally with respect to the other. That is, at time $s$, the line
segments joining one singularity to the other are made longer by $s$,
while line segments joining a singularity to itself are unchanged. For
small values of $s$, one obtains a new translation surface $q_s$ by
examining the picture. But for large values of $s$, some of the
segments in the figure cross each other and it is not clear whether
the operation defined above gives rise to a well-defined surface. Our
Theorem \ref{thm: real rel main} shows that the operation of moving the zeroes is
well-defined for all values of $s$, provided one rules out the obvious
obstruction that two singularities connected by a horizontal segment
collide.

\begin{figure}[htp] \name{figure: real rel}
\centerline{\hfill \includegraphics[scale=0.45]{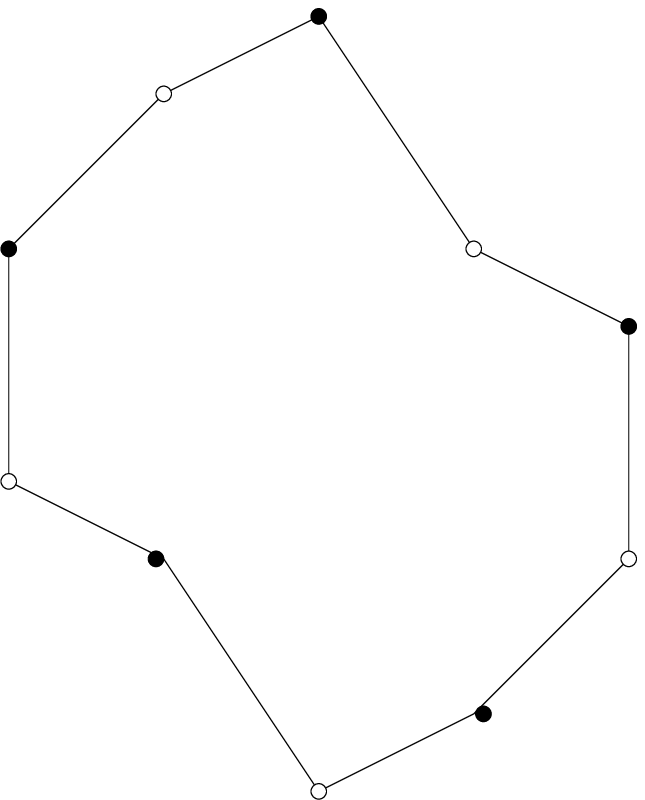}  \hfill
\includegraphics[scale=0.45]{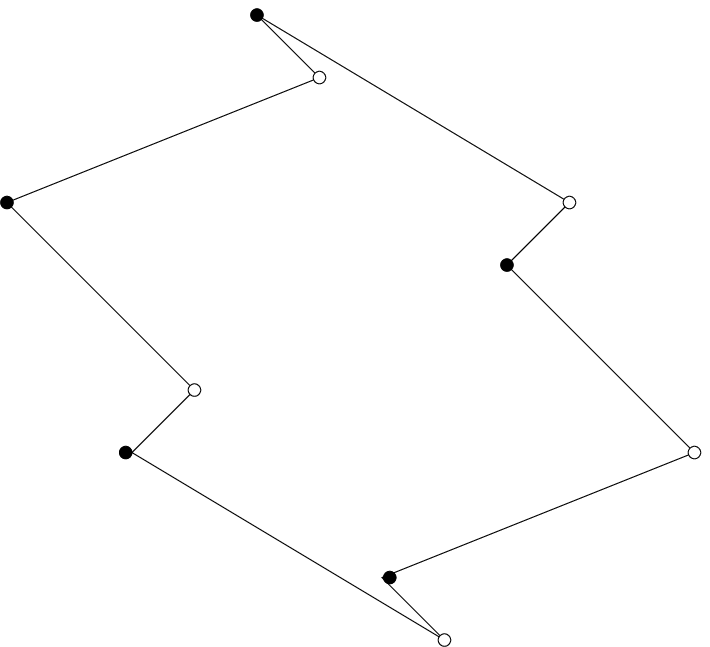}   \hfill \includegraphics[scale=0.45]{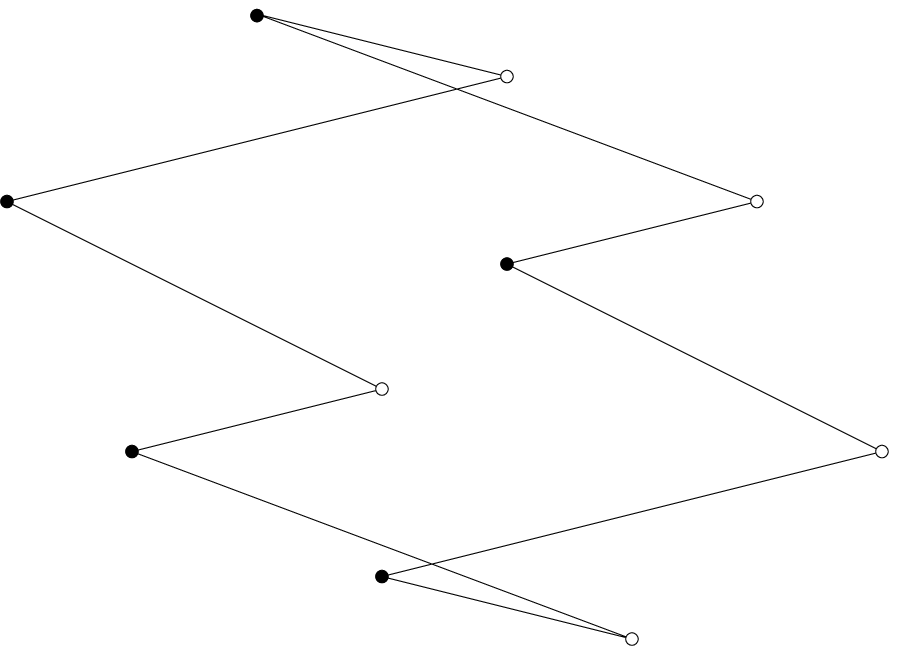}   \hfill}
\caption{The singularity $\circ$ is moved to the right with
respect to $\bullet$, and the
picture becomes nonsensical.}
\end{figure}

\subsection{Main geometrical result}
Let $S$ be a compact oriented surface of genus $g \geq 2$ and $\Sigma
\subset S$ a finite subset. A {\em
translation surface structure} on $(S,\Sigma)$ is an atlas of charts into the
plane, whose domains cover
$S \sm \Sigma$, and such that the
transition maps are translations. Such structures
arise naturally in complex 
analysis and in the study of interval exchange transformations and
polygonal billiards and have been the subject of intensive research, 
see the recent surveys  \cite{MT, Zorich survey}.

Several geometric structures on the plane can
be pulled back to $S \sm \Sigma$ via the atlas, among them the foliations of the
plane by horizontal and vertical lines. We call the resulting
oriented foliations of $S \sm \Sigma$ the {\em horizontal and vertical
foliation} 
respectively. Each can be completed to a singular foliation on $S$,
with a pronged {\em singularity} at each point of $\Sigma$. Label the
points of $\Sigma$ by $\xi_1, \ldots, \xi_k$ and fix natural 
numbers $r_1, \ldots, r_k $. We say that
the translation surface is {\em of
type $\mathbf{r} = (r_1, \ldots, r_k)$}
%, or {\em belongs to the stratum $\til\HH(\mathbf{r})$}, 
if the horizontal and vertical foliations have a $2(r_j+1)$-pronged
singularity at each $\xi_j$.

By pulling back $dy$ (resp. $dx$) from the plane, the
horizontal (vertical) foliation arising from a translation 
surface structure is equipped with a {\em transverse measure},
i.e. a
family of measures on each arc transverse to the foliation which is
invariant under holonomy along leaves. We will call an oriented singular foliation
on $S$, with singularities in 
$\Sigma$, which is equipped with a transverse
measure a {\em measured foliation on $(S, \Sigma)$}. We caution the reader that we
deviate from the convention adopted in several
papers on this subject, by considering the number and orders of
singularities as 
part of the structure of a measured foliation; we call these the {\em
type} of the foliation. In other words, we do not consider two measured
foliations which differ by a Whitehead move to be the same. 

Integrating the transverse measures gives rise to two
well-defined cohomology classes in the relative cohomology group
$H^1(S, \Sigma; \R)$. That is we obtain a map 
$$\mathrm{hol} : \left\{ \mathrm{translation\ surfaces\ on\ }
(S,\Sigma) \right \} \to \left(H^1(S, \Sigma; \R)\right)^2.
$$
This map is a local homeomorphism and serves to give coordinate
charts to the set of translation surfaces (see \S \ref{subsection: strata} for
more details), but it is not globally injective. For example,
precomposing with a homeomorphism which acts trivially on homology may
change a marked translation surface structure but does not change its
image in under hol; see \cite{McMullen American J} for more examples.
On the other hand it is not hard to see that the pair of horizontal
and vertical measured foliations associated to a translation surface
uniquely determine it, and hence the question arises of reconstructing
the translation surface from just the cohomological data recorded by
hol. Our main theorems give results in this direction. 

To state them we define the set of {\em (relative) cycles carried
by $\FF$}, denoted $H_+^\FF$, to be the image in $H_1(S,
\Sigma; \R)$ of all (possibly atomic) transverse measures on $\FF$
(see \S\ref{subsection: cohomology}). 
%, either via Poincar\'e
%duality or through the Schwartzman asymptotic cycle construction \cite{Schwartzman}.
%\combarak{how do you like the reference? or should we go for something
%more traditional?}

\begin{thm}
\name{thm: sullivan1}
Suppose $\FF$ is a measured foliation on $(S, \Sigma)$, and $\B \in H^1(S, \Sigma;
\R)$. Then the following are equivalent:
\begin{enumerate}
\item
There is a measured foliation $\GG$ on $(S, \Sigma)$, everywhere
transverse to $\FF$ and of the same type, such that $\GG$ represents $\B$. 
\item
Possibly after replacing $\B$ with $-\B$, for any $\delta \in H_+^{\FF}$, $\B\cdot \delta >0$.  
\end{enumerate}
\end{thm}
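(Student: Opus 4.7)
For the implication $(1)\Rightarrow(2)$ I would proceed directly. Suppose $\GG$ is transverse to $\FF$, of the same type, with $[\GG]=\B$. For any $\delta\in H_+^{\FF}\sm\{0\}$ represented by a transverse measure $\mu$ on $\FF$, work in a flow-box chart where $\FF$ is horizontal and $\GG$ is vertical. There $\mu$ is a measure on horizontal transversals and the transverse measure of $\GG$ is $|dx|$. The pairing $\B\cdot\delta$ localizes to the integral of one against the other, whose sign is constant and fixed by the orientation of the frame $(\FF,\GG)$. Replacing $\B$ by $-\B$ if necessary, this integral is strictly positive because $\mu$ is non-trivial.

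For the main direction $(2)\Rightarrow(1)$ the strategy I would pursue is to dualize: produce a closed $1$-form $\omega$ representing $\B$ that is strictly positive on every tangent vector positively transverse to $\FF$, and then let $\GG$ be the oriented foliation with tangent distribution $\ker\omega$ and transverse measure $|\omega|$. Transversality to $\FF$ is then automatic, and the singular profile of $\omega$ forced by matching its kernel with the complement of $\FF$'s $2(r_j+1)$ prongs at $\xi_j$ produces the right prong count for $\GG$, so $\GG$ has the same type as $\FF$.

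The existence of $\omega$ I would extract by convex duality. Let $K\subset H^1(S,\Sigma;\R)$ be the cone of classes representable by a closed $1$-form non-negative on positive transversals. Its interior consists of classes with strictly positive representatives, and by finite-dimensional Hahn--Banach, $\interior(K)$ is exactly the set of $\B$ strictly positive on $\overline{H_+^{\FF}}\sm\{0\}$, provided one has the duality $K^\vee=\overline{H_+^{\FF}}$. Hypothesis (2), after fixing the sign of $\B$, then places $\B$ in $\interior(K)$ and delivers $\omega$.

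The main obstacle is establishing the duality $K^\vee=\overline{H_+^{\FF}}$. My plan is to work on a rectangle decomposition of $(S,\Sigma)$ adapted to $\FF$: the $0$-skeleton contains $\Sigma$ and is otherwise $4$-valent regular, and each $2$-cell is a rectangle with two horizontal leaf segments and two transverse arcs as sides. Transverse measures on $\FF$ become non-negative weights on horizontal edges satisfying switch conditions at interior vertices, while classes in $K$ become cocycles with non-negative horizontal-edge values. These two cones are visibly mutually dual inside the finite-dimensional space of horizontal-edge weights; quotienting by switch relations on one side and cocycle relations on the other transports the duality to $H_+^{\FF}\subset H_1(S,\Sigma;\R)$ and $K\subset H^1(S,\Sigma;\R)$. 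A final, more routine, step lifts the cocycle on the $1$-skeleton to a smooth closed $1$-form on $S\sm\Sigma$ using standard local pronged models at the singularities, giving $\omega$ itself.
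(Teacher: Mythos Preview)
Your easy direction $(1)\Rightarrow(2)$ is fine and matches the paper.

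For $(2)\Rightarrow(1)$ there is a genuine gap at the step you label ``these two cones are visibly mutually dual.'' On a \emph{fixed} rectangle decomposition adapted to $\FF$ (for instance the complex $\BB(\gamma)$ attached to a special transverse system $\gamma$), once you normalize a representing cocycle $\beta$ to vanish on the non-cylinder transverse edges --- which is the only way to make the pairing with $H_+^\FF$ transparent --- the values $\beta(\ell_R)$ on the leaf segments through each rectangle $R$ are \emph{determined} by the class $\B$: the relevant coboundaries $\delta h$ must have $h$ constant on each transverse component, and since each non-cylinder component meets $\Sigma$, $h$ vanishes there. Consequently the polyhedral cone $K_\gamma=\{\B:\beta(\ell_R)>0\text{ for all }R\}$ is typically a \emph{proper} subcone of $\{\B:\B\cdot\delta>0\ \forall\,\delta\in H_+^\FF\}$. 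Concretely, if $\FF$ is uniquely ergodic with no saddle connections, the latter is an open half-space while $K_\gamma$ is cut out by as many inequalities as there are rectangles. Your LP duality, read correctly, only yields the easy inclusion $K_\gamma^\vee\supset H_+^\FF$; it does not place a given $\B$ inside $K_\gamma$.

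What is missing is exactly the mechanism the paper supplies (following Thurston rather than Sullivan): one must \emph{refine} the transverse system to $\gamma_N\subset\gamma$ so that every leaf edge of $\BB(\gamma_N)$ either shadows a saddle connection (hence lies in $H_+^\FF$ outright) or crosses $\gamma$ at least $N$ times. If some leaf edge $e_N$ still had $\beta(e_N)\le 0$ for all $N$, the normalized counting measures along $e_N$ would converge weak-$*$ to an invariant transverse measure $\mu$ with $\B(c_\mu)\le 0$, contradicting the hypothesis. Thus the cones $K_{\gamma_N}$ exhaust the dual of $H_+^\FF$ as $N\to\infty$, but no single decomposition suffices. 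If you wish to run a genuine Sullivan-style Hahn--Banach argument instead, it must take place in the infinite-dimensional space of closed $1$-currents (or forms), not in a finite edge-weight space; and even then the relative setting requires care, since the atomic measures on saddle connections in $\FF$ are bona fide elements of $H_+^\FF$ contributing inequalities your ``switch conditions at interior vertices'' do not capture.
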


After proving Theorem \ref{thm: sullivan1} we learned from F. Bonahon
that it has a long and interesting history. Similar result were proved
by 
Thurston \cite{Thurston stretch maps} in the context of train tracks
and measured laminations, and by Sullivan \cite{Sullivan} in a very
general context involving foliations. Bonahon neglected to mention his
own contribution \cite{Bonahon}. Our result is a `relative
version' in that we control the type of the foliation, 
and need to be careful with the singularities. This explains why our
definition of $H_+^\FF$ includes the relative cycles carried by
critical leaves of $\FF$. The
proof we present here is 
%Our
%original proof of this result 
%employed a cohomological equation standard in topological dynamics
%(see e.g. \cite[Chap. X]{GH});
%here we present a more transparent proof which is 
close to the
one given by Thurston. 

The arguments proving Theorem \ref{thm: sullivan1} imply the following
stronger statement (see \S \ref{section: prelims} for detailed
definitions):

\begin{thm}\name{thm: hol homeo}
Given a topological singular foliation $\FF$ on $(S, \Sigma)$, let $\til \HH
(\FF)$ denote the set of marked translation surfaces whose vertical
foliation is topologically equivalent to $\FF$. Let $\AAA(\FF) \subset
H^1(S, \Sigma; \R)$ denote the set of cohomology classes corresponding
to (non-atomic) transverse measures on $\FF$. Let $\BBB(\FF) \subset
H^1(S, \Sigma; \R)$ denote the set of cohomology classes that pair
positively with all elements of $H^\FF_+ \subset H_1(S, \Sigma)$. Then 
$$
\mathrm{hol} : \til \HH (\FF) \to \AAA(\FF) \times \BBB(\FF)
$$
is a homeomorphism. 
\end{thm}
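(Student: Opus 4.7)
The plan is to reduce Theorem \ref{thm: hol homeo} directly to Theorem \ref{thm: sullivan1}, verifying well-definedness, bijectivity, and bicontinuity in turn.

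For well-definedness of $\mathrm{hol}$ as a map into $\AAA(\FF) \times \BBB(\FF)$: if $q \in \til\HH(\FF)$ has vertical measured foliation $\FF_q$ (regarded as a transverse measure on $\FF$) and horizontal measured foliation $\GG_q$, then $[\FF_q] \in \AAA(\FF)$ by definition, and the easy direction $(1)\Rightarrow(2)$ of Theorem \ref{thm: sullivan1} applied to $\FF_q$ and $[\GG_q]$ gives $[\GG_q] \cdot \delta > 0$ for every $\delta \in H_+^{\FF_q} = H_+^\FF$, so $[\GG_q] \in \BBB(\FF)$. For surjectivity, given $(\A, \B) \in \AAA(\FF) \times \BBB(\FF)$, realize $\A$ as the class of a transverse measure $\mu$ on $\FF$, apply Theorem \ref{thm: sullivan1} to the measured foliation $\FF_\mu$ and the class $\B$ to produce a transverse measured foliation $\GG$ of the same type with $[\GG] = \B$, and assemble $(\FF_\mu, \GG)$ into local translation charts $(x,y) \mapsto x+iy$ (with $\GG$ horizontal and $\FF_\mu$ vertical) to obtain $q \in \til\HH(\FF)$ with $\mathrm{hol}(q) = (\A, \B)$.

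For injectivity, suppose $\mathrm{hol}(q_1) = \mathrm{hol}(q_2)$. A translation surface is determined by its pair of horizontal and vertical measured foliations, so it suffices to show that each cohomology class determines a unique transverse measure on its underlying topological foliation. This is a classical fact, obtained via ergodic decomposition on each minimal component together with the linear independence of the relative cohomology classes of distinct ergodic measures; in the relative setup, the extra classes in $H_+^\FF$ carried by critical leaves only add more information and so strengthen injectivity. Bicontinuity then follows from the fact, recalled in \S \ref{subsection: strata}, that $\mathrm{hol}$ is a local homeomorphism on the ambient stratum: combined with the bijection onto $\AAA(\FF) \times \BBB(\FF)$ just established, the restriction to $\til\HH(\FF)$ is automatically bicontinuous.

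The main obstacle I anticipate is the injectivity step in the presence of singularities: while the cohomology-to-measure correspondence is standard for nonsingular foliations, the relative setting with prescribed singular type needs a careful argument that the singular classes in $H_+^\FF$ also suffice to detect the measure. A secondary subtlety is making sure the translation surface assembled from $(\FF_\mu, \GG)$ actually lands in the marked component $\til\HH(\FF)$, rather than in a stratum related by a Whitehead move; this should follow because Theorem \ref{thm: sullivan1} produces $\GG$ of the same type as $\FF_\mu$, so the singular combinatorics match exactly, but one should verify that the marking supplied by the topological identification with $\FF$ is preserved throughout the construction.
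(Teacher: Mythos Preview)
Your reduction of well-definedness and surjectivity to the two directions of Theorem \ref{thm: sullivan1} is exactly what the paper does. The gaps are in injectivity and in continuity of the inverse.

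For injectivity, your reduction ``each cohomology class determines a unique transverse measure on its underlying topological foliation'' only applies to the \emph{vertical} direction, where the underlying foliation $\FF$ is fixed by hypothesis. For the horizontal direction it does not apply: if $\hol(\bq_1)=\hol(\bq_2)=(\A,\B)$, then after your argument both $\bq_i$ have the same vertical measured foliation $\FF_\mu$, but their horizontal foliations $\GG_1,\GG_2$ are a priori \emph{different} topological foliations transverse to $\FF_\mu$, both representing $\B$. There is no ``underlying foliation'' on which to run an ergodic-decomposition argument. The paper proves the required uniqueness as a separate lemma (Lemma~\ref{lem: fiber singleton}): using a special transverse system $\gamma$ one isotopes so that $\GG_1$ and $\GG_2$ agree along $\gamma$, and then checks that on every edge of $\BB(\gamma)$ the cocycles $[\GG_1]$ and $[\GG_2]$ are forced to agree (by connecting endpoints to $\Sigma$ along $\gamma$ and using that both represent $\B$). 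Since the rectangle geometry is determined by these edge values, $\bq_1=\bq_2$. Your self-identified ``main obstacle'' (singularities affecting the measure-from-cohomology map) is not the real issue; the real issue is that the horizontal foliation is not pinned down topologically.

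For bicontinuity, ``local homeomorphism on the ambient stratum plus bijection on the subset'' does not formally imply that the restriction is a homeomorphism: one needs to know that a point $\bq'\in\til\HH$ near $\bq\in\til\HH(\FF)$ with $x(\bq')\in\AAA(\FF)$ actually lies in $\til\HH(\FF)$, so that the local inverse of $\hol$ near $\bq$ agrees with the global inverse on $\til\HH(\FF)$. The paper supplies this step by observing that, in a geometric triangulation of $\bq$, the vertical foliation of a nearby $\bq'$ is determined by the weights $x(\bq')$ on the edges; if those weights come from a transverse measure on $\FF$, the vertical foliation of $\bq'$ is topologically $\FF$. Combined with the uniqueness lemma above, this shows the two inverses coincide and hence the inverse is continuous.
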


\subsection{Applications}
We present two applications of Theorem \ref{thm: sullivan1}. The first
concerns the generic properties of interval exchange transformations.
Let $\sigma$ be a permutation on
$d$ symbols and let $\R^d_+$ be the vectors $\A = (a_1, \ldots, a_d)$
for which $a_i>0, \, i=1, \ldots, d.$ The pair $\A, \sigma$ determines
an {\em interval exchange} $\IE_\sigma(\A)$ by subdividing the 
interval $I_\A=\left[ 0, \sum a_i \right)$ into  
$d$ subintervals of lengths $a_i$, which are
permuted according to $\sigma$. In 1982 
Masur \cite{Masur-Keane} and Veech \cite{Veech-zippers} confirmed a
conjecture of Keane, proving (assuming that $\sigma$ is irreducible) that almost
every $\A$, with respect to Lebesgue measure on $\R^d_+$, is {\em uniquely
ergodic}, i.e. the only invariant measure for $\IE_\sigma(\A)$ is
Lebesgue measure. On the other hand Masur and Smillie
\cite{MS} showed that the set of non-uniquely ergodic interval exchanges
is large in the sense of Hausdorff dimension. 
A basic problem is to understand the finer structure of the set of
non-uniquely ergodic interval exchanges. 
Specifically, motivated by analogous developments in diophantine
approximations, we will ask: For which curves
$\ell \subset \R^d_+$ is the non-uniquely ergodic set of zero
measure, with respect to the natural measure on the curve?
Which properties of a measure $\mu$ on $\R^d_+$
guarantee that $\mu$-a.e. $\A$ is uniquely
ergodic?

\ignore{
In a celebrated paper \cite{KMS}, Kerckhoff, Masur and Smillie 
solved an important special case of this problem and introduced a
powerful dynamical technique. Given a rational angled polygon 
$\mathcal{P}$, they 
considered the interval exchange $\IE_{\mathcal{P}}(\theta)$ which is
obtained from the 
billiard flow in $\mathcal{P}$ in initial direction $\theta$ as the
return map to a subset of $\mathcal{P}$, and  
proved  that for every $\mathcal{P}$, for almost every
$\theta$ (with respect to Lebesgue 
measure on $[0, 2\pi]$), $\IE_\mathcal{P}(\theta)$ is uniquely
ergodic. The result was later extended by Veech \cite{Veech-fractals},
who showed that Lebesgue measure 
can be replaced by any measure on $[0, 2\pi]$ satisfying a certain
decay condition, for
example the coin-tossing measure on Cantor's 
middle thirds set. 
%It can be checked that fixing a subset of
%$\mathcal{P}$ we 
%obtain a permutation $\sigma$, and 
The above results correspond to the
statement that for a certain $\sigma$ (depending on $\mathcal{P}$),
$\mu$-a.e. $\A \in \R^d_+$ is uniquely ergodic for 
certain measures $\mu$ supported on line segments in $\R^d_+$. 
}

In this paper we obtain several results in this direction, involving
three ingredients: a curve in $\R^d_+$; a
measure supported on the curve; and a dynamical property of interval
exchanges. The goal will be to 
understand the dynamical properties of points in the support of the
measure. We state these results, and some open questions in this
direction, in \S \ref{section: results on iets}. To illustrate them we state
a special case,
which may be thought of as an interval exchanges analogue of a
famous
result of Sprindzhuk (Mahler's conjecture, see e.g. 
\cite[\S4]{dimasurvey}): 
% about the curve \equ{eq: mahler curve}.
\begin{thm}
\name{thm: mahler curve}
For $d \geq 2,$ let 
\eq{eq: mahler curve}{
\A(x) = \left(x, x^2, \ldots, x^d\right)
}
and let
$\sigma(i)= d+1-i$. Then $\A(x)$ is uniquely ergodic 
for Lebesgue a.e. $x>0$. 
%
%\eq{eq: mahler curve2}{
%\A(x) = \left(x, x^2, \ldots, x^{d}\right) \ \ \
%\mathrm{and} \ \ \sigma(i) = d+1-i.
%}
%Then $\IE_\sigma(\A(x))$ is
%uniquely ergodic for Lebesgue-a.e. $x>0$.  
\end{thm}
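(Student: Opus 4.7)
My plan is to combine Theorem \ref{thm: sullivan1} with a quantitative non-divergence estimate for the Teichm\"uller flow along a polynomial curve of translation surfaces, in the spirit of Kleinbock--Margulis's proof of Sprindzhuk's theorem on Mahler's conjecture.

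First, I would realize the Mahler curve geometrically. For each $x>0$, Theorem \ref{thm: sullivan1} produces a translation surface $q(x)$ in the hyperelliptic component of the appropriate stratum whose vertical foliation realizes $\IE_\sigma(\A(x))$ as the return map to a fixed transversal. The transverse cohomology class $\B(x)$ of the horizontal foliation is flexible: any continuous choice satisfying the positivity condition (2) of Theorem \ref{thm: sullivan1} is admissible, so I would arrange $\B(x)$ to depend polynomially (even linearly) on $x$. The resulting map $x \mapsto q(x)$ is then a polynomial curve in the period coordinates on the stratum. By Masur's criterion, $\IE_\sigma(\A(x))$ is uniquely ergodic if and only if the forward Teichm\"uller orbit $\{g_t q(x)\}_{t \geq 0}$ is recurrent in the stratum, so it suffices to establish this recurrence for Lebesgue a.e.\ $x>0$.

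The heart of the argument is a non-divergence estimate: for a $(C,\alpha)$-good curve in period coordinates, the Lebesgue measure of the parameters whose $g_t$-image leaves a given compact subset decays with the cusp depth, with bounds summable along a suitable sequence $t_n \to \infty$. This is the translation-surface analogue (developed by Minsky--Weiss, Eskin--Masur, and others) of the Kleinbock--Margulis estimate on $\SL(d+1,\R)/\SL(d+1,\Z)$. Since polynomials in one variable are automatically $(C,\alpha)$-good and period coordinates are linear, the estimate applies to our curve $x \mapsto q(x)$. A Borel--Cantelli summation then yields recurrence, and hence unique ergodicity, for Lebesgue a.e.\ $x$.

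The main obstacle is the non-divergence estimate itself, which requires describing the cusps of the hyperelliptic stratum (neighborhoods of surfaces with short saddle connections) as explicit algebraic subsets in period coordinates, and carrying out the Borel--Cantelli summation over the resulting polynomial inequalities in $x$. The role of Theorem \ref{thm: sullivan1} in this step is twofold: it furnishes the global polynomial family $q(x)$, and it gives period coordinates in which the polynomial structure of $\A(x)$ is transparent. A secondary subtlety is that the positivity condition (2) must be maintainable uniformly as $x$ ranges over $(0,\infty)$; this may require exploiting the natural $x \leftrightarrow 1/x$ symmetry of the reversal permutation to reduce to the half-line $x \geq 1$, or alternatively piecing together local polynomial choices of $\B(x)$ across finitely many coordinate patches.
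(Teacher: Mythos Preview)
Your outline diverges from the paper's argument at the central step, and the divergence hides the one computation that is specific to the Mahler curve and the reversal permutation. The paper does \emph{not} lift the whole curve $x\mapsto\A(x)$ to a polynomial family of translation surfaces and then appeal to a $(C,\alpha)$-good non-divergence estimate along it. Instead, the point is that the \emph{tangent direction} $\A'(x)$ is itself the correct choice of $\B$: an explicit calculation (carried out in the proof) shows that for $\sigma(i)=d+1-i$ one has $y_j(\A'(x))<0$ and $y'_j(\A'(x))>0$ for all $j$, whence $L_{\A,-\A'}>0$ everywhere on $I$, so $(\A(x),-\A'(x))$ is a positive pair in the sense of \S\ref{section: pairs} whenever $\A(x)$ has no connections. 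By Theorem~\ref{thm: sullivan2} and Corollary~\ref{cor: same gamma}, this is exactly the statement that the \emph{tangent line} $s\mapsto\A(x)+s\A'(x)$ lifts to a genuine horocycle $s\mapsto h_s q(x)$. The result then follows from Theorem~\ref{cor: ue on curves}, whose proof (Theorem~\ref{thm: curves}) combines the horocycle non-divergence of \cite{with Yair} with a $C^2$ linearization.

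By contrast, your plan treats $\B(x)$ as a free polynomial choice and defers the positivity check; but that check is the only place where the specific form of $\A(x)$ and of $\sigma$ enter, and it is not obvious for an arbitrary polynomial $\B(x)$. More seriously, the non-divergence input you need---a quantitative estimate along a degree-$d$ polynomial curve in the stratum---is not what \cite{with Yair} provides: Theorem~\ref{thm: nondivergence, general} here is stated and proved only for horocycles (where each $\ell_\delta$ is piecewise linear in $s$). Extending it to higher-degree curves is plausible in spirit (the covering argument is fairly robust), but it is additional work, and precisely what the paper avoids by reducing to horocycles via the choice $\B=\A'$. In short: your route could probably be pushed through, but the paper's route is shorter because the identification $\B=\A'$ converts the problem to one where the needed non-divergence estimate already exists.
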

Identifying two translation structures which differ by a
precomposition with an orientation preserving homeomorphism of $S$
which fixes each point of $\Sigma$ we obtain the {\em stratum
$\HH(\mathbf{r})$ of
translation surfaces of type $\mathbf{r}$}. 
There is an action of $G = \SL(2,\R)$ on $\HH(\mathbf{r})$, and its
restriction to the subgroup $\{h_s\}$ as in \equ{eq: horocycle matrix}
is called the {\em horocycle flow}.  
To prove our results on unique ergodicity we employ the strategy, 
introduced in 
\cite{KMS}, of lifting interval exchanges to translation
surfaces, and studying the
dynamics of the $G$-action on $\HH(\mathbf{r})$. Specifically we use
quantitative
nondivergence estimates \cite{with Yair} for the horocycle flow. Theorem \ref{thm: sullivan1} is used
to characterize the lines in $\R^d_+$ which may be lifted to horocycle
paths. 

\medskip

The second application concerns an operation of `moving singularities
with respect to each other' which has been discussed in the literature
under various names (cf. \cite[\S9.6]{Zorich survey}) and which we
now define. Let 
$\til \HH(\mathbf{r})$ be the {\em stratum of marked translation
surfaces of type $\mathbf{r}$}, i.e. two translation surface structures are equivalent 
if they differ by precomposition by a
homeomorphism of $S$ which fixes $\Sigma$ and is isotopic to the
identity rel $\Sigma$. Integrating transverse measures as above induces
a well-defined map $\til \HH(\mathbf{r}) \to H^1(S, \Sigma; \R^2)$ which
can be used to endow $\til \HH(\mathbf{r})$ (resp. $\HH(\mathbf{r})$)
with the structure of an affine manifold (resp. orbifold), such that
the natural map $\til \HH(\mathbf{r}) \to 
\HH(\mathbf{r})$ is an orbifold cover. We describe foliations
on $\til \HH(\mathbf{r})$ which descend to well-defined foliations on
$\HH(\mathbf{r})$. The two summands in the splitting
\eq{eq: splitting}{ 
H^1(S, \Sigma; \R^2) \cong H^1(S, \Sigma; \R) \oplus H^1(S, \Sigma;
\R)}
induce two foliations on $\til \HH(\mathbf{r})$, which we call the {\em
real foliation} and {\em imaginary foliation} respectively. Also, considering the
exact sequence in cohomology 
\eq{eq: defn Res}{
H^0(S;\R^2) \to H^0(\Sigma ; \R^2) \to H^1(S, \Sigma ; \R^2)
\stackrel{\mathrm{Res}}{\to} H^1(S; \R^2) \to \{0\},
}
we obtain a natural subspace $\ker \mathrm{Res} \subset H^1(S, \Sigma;
\R^2)$, consisting of the cohomology classes which
vanish on the subspace of `absolute periods' $H_1(S) \subset
H_1(S, \Sigma).$ 
The foliation induced on $\til \HH(\mathbf{r})$ is called the
{\em REL foliation} or {\em kernel foliation.} Finally, intersecting
the leaves of the real 
foliation with those of the REL foliation yields the {\em
real REL foliation}.  
It has leaves of dimension $k-1$ (where $k=|\Sigma|$). Two nearby translation
surfaces $q$ and $q'$ are in the same plaque if the integrals of the flat structures along all 
closed curves are the same on $q$ and $q'$, and if the integrals of
curves joining {\em distinct} singularities only differ in their
horizontal component.  
Intuitively, $q'$ is obtained from $q$ by fixing one singularity as a
reference point and moving the other singularities
horizontally. Understanding this foliation is
important for the study of the dynamics of the horocycle flow. It was
studied in some restricted settings in \cite{EMM, CW}, where it was
called {\em Horiz}.  

The leaves of the kernel foliation, and hence the real REL foliation,
are equipped with a natural translation structure, modeled on the
vector space $\ker \, \mathrm{Res} \cong H^0(\Sigma; \R)/H^0(S, \R)$. One
sees easily that the leaf of $q$ is incomplete if, when moving along
the leaf, a saddle connection
on $q$ is made to have length zero, i.e. if `singularities
collide'. Using Theorems \ref{thm: sullivan1} and \ref{thm: hol homeo} we 
show in Theorem \ref{thm: real rel main} that this is the only
obstruction to completeness of leaves. This implies that on a
large set, the leaves of real REL are the orbits of an action. More
precisely, let $\QQ$ be the set of translation surfaces
with no horizontal saddle connections, in a finite cover $\hat{\HH}$ of
$\HH(\br)$ (we take a finite cover to make $\HH(\br)$ into a manifold). This is a set
of full measure which is invariant under the group $B$ of upper
triangular matrices in $G$. We show that it coincides with the set of
complete real REL leaves. Let $F$ denote the group $B \ltimes
\R^{k-1}$, where $B$ acts on $\R^{k-1}$ %by homotheties 
via 
%the
%character 
$$\left( \begin{matrix} a & b \\ 0 & 1/a \end{matrix} \right) \cdot \vec{v} = a\vec{v}.$$ 
We prove:
\begin{thm}
\name{thm: real rel action}
The group $F$ acts on $\QQ$ continuously and affinely, preserving the
natural measure, and leaving invariant the subset of translation surfaces
of area one. The action of $B$ is the same as that obtained by restricting the
$G$-action, and the $\R^{k-1}$-action is transitive on each real REL leaf in
$\QQ$. 
\end{thm}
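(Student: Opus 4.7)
The core technical input is Theorem~\ref{thm: real rel main}, which asserts that the only obstruction to extending a real REL leaf is the collapse of a horizontal saddle connection, and by definition this cannot happen in $\QQ$. The plan is to use this to integrate the infinitesimal real REL action into a transitive $\R^{k-1}$-action on each real REL leaf in $\QQ$ and then splice it with the restricted $B$-action. Concretely, each real REL leaf carries a natural translation structure modeled on $\ker\mathrm{Res}$, which by the exact sequence (\ref{eq: defn Res}) is isomorphic to $H^0(\Sigma;\R)/H^0(S;\R) \cong \R^{k-1}$. Theorem~\ref{thm: real rel main} ensures every such leaf meeting $\QQ$ is complete, so the local charts glue into a transitive action of $\R^{k-1}$ on each leaf; this already yields the transitivity assertion of the theorem.

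I would then define the $F$-action by $(M, \vec v)\cdot q \df \REL_{\vec v}(M\cdot q)$. The set $\QQ$ is $B$-invariant since upper triangular matrices preserve the horizontal foliation, and hence the absence of horizontal saddle connections, and it is $\R^{k-1}$-invariant by construction. To verify the group law I would work in period coordinates on $H^1(S,\Sigma;\R^2)$: the $B$-action is left multiplication on the $\R^2$-factor and $\REL_{\vec v}$ is translation by $(\vec v,0)$ with $\vec v \in \ker\mathrm{Res}$, so for $M$ with upper-left entry $a$,
\[
M\cdot\bigl((x,y)+(\vec v, 0)\bigr) = M(x,y) + (a\vec v, 0),
\]
which gives $L_M\,\REL_{\vec v}\,L_M^{-1} = \REL_{a\vec v}$, exactly the semidirect relation corresponding to $M\cdot\vec v = a\vec v$.

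Continuity, affineness, and invariance of the natural (Lebesgue) measure in period coordinates are immediate from the affine description of both partial actions: $B$ has determinant one on the $\R^2$-factor and $\R^{k-1}$ acts by translation. The area one locus is preserved because area is the symplectic pairing of absolute real and imaginary period classes; $B$ preserves it since $\det M = 1$, and $\REL$ preserves it trivially because it leaves absolute periods unchanged. The identification of the $B$-subaction with the restriction of the $G$-action is built into the definition. The main obstacle throughout is Theorem~\ref{thm: real rel main} itself; granted that result, the remaining work is a formal assembly of the two partial actions into a single $F$-action.
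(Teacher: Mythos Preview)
Your approach is essentially the same as the paper's: use Theorem~\ref{thm: real rel main} to define the $W\cong\R^{k-1}$-action on $\til Q$, verify the semidirect commutation relation $g\,\REL_{\vec v}\,g^{-1}=\REL_{a\vec v}$ in period coordinates, and read off affineness, measure preservation, and area-one invariance from that description. The paper also invokes Proposition~\ref{prop: independent of marking} to descend from $\til Q$ to the finite cover $\hat\HH$, and uses the uniqueness statement of Lemma~\ref{lem: fiber singleton} to check the additive group law $(\cc_1+\cc_2)\bq=\cc_1(\cc_2\bq)$ explicitly; your completeness argument for the translation structure on leaves is an equivalent way to get this.

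The one place you are too quick is continuity. Saying it is ``immediate from the affine description'' only gives joint continuity of $(\vec v,\bq)\mapsto \REL_{\vec v}(\bq)$ in a single period-coordinate chart, i.e.\ for small $\vec v$. For a fixed large $\vec v$ the path $t\mapsto \REL_{t\vec v}(\bq)$ may pass through many charts, and you must show that the endpoint still depends continuously on the starting point $\bq$ (whose horizontal foliation, and hence the relevant instance of Theorem~\ref{thm: hol homeo}, is varying). The paper handles this by a compactness argument: cover the path $\{\REL_{t\vec v}(\bq):t\in[0,1]\}$ by finitely many product neighborhoods supplied by Corollary~\ref{cor: foliation compatible}, choose a partition $0=t_0<\cdots<t_k=1$ with each increment lying in a single chart, and induct on $i$ to conclude $\REL_{t_i\vec v_n}(\bq_n)\to\REL_{t_i\vec v}(\bq)$. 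You should include this step, or an equivalent one, rather than declare continuity immediate.
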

Note that while the $F$-action is continuous, $\QQ$ is not
complete: it is the complement in $\hat{\HH}$ of a dense
countable union of proper affine submanifolds with boundary. Also note
that the leaves of the real foliation or the kernel foliation
are not orbits of a group action on $\hat{\HH}$ --- but see
\cite{EMM} for a related discussion of pseudo-group-actions.

\ignore{

The proof of Theorem \ref{thm: real rel action} relies on another
geometrical result of independent interest (Theorem \ref{thm: fiber
singleton}). It asserts that for a measured foliation $\FF$ and
a cohomology class $\B \in H^1(S, \Sigma; \R)$, there is at most one
$\bq \in \til \HH$ whose horizontal foliation is $\FF$ and whose
vertical foliation represents $\B$.

\subsection{Mahler's question}
A vector $\xx \in \R^d$ is  {\em very well approximable} if
for some $\vre>0$ there are infinitely many $\pp \in \Z^d, q
\in \N$ satisfying $\|q\xx - \pp \| < q^{-(1/d + \vre)}.$ It is
a classical fact that almost every (with respect to Lebesgue measure) $\xx
\in \R^d$ is not very well approximable, but that the set of very
well approximable vectors is large in the sense of Hausdorff
dimension. A famous conjecture of
Mahler from the 1930's is that for almost every (with respect to
Lebesgue measure on the real line) $x \in \R$, the vector 
\eq{eq: mahler curve}{
%$$
\left(
x, x^2, \ldots, x^d
\right)
%$$
}
is not very well approximable. The conjecture was settled
by Sprindzhuk in the 1960's and spawned many additional questions and
results of a similar nature. A general formulation of the 
problem is to describe measures $\mu$ on $\R^d$ for which almost
every $\xx$ is not very well approximable. A powerful technique
involving dynamics on homogeneous spaces was introduced by Kleinbock
and Margulis \cite{KM}, see \cite{dimasurvey} for
a survey and \cite{KLW} for recent developments and open
questions. 

In this paper we discuss an analogous problem concerning 
interval exchange transformations and dynamics on strata of
translation surfaces. Let $\sigma$ be a permutation on
$d$ symbols and let $\R^d_+$ be the vectors $\A = (a_1, \ldots, a_d)$
for which $a_i>0, \, i=1, \ldots, d.$ The pair $\A, \sigma$ determine
an {\em interval exchange} $\IE_\sigma(\A)$ by subdividing the 
interval $I_\A=\left[ 0, \sum a_i \right)$ into  
$d$ subintervals of lengths $a_i$, which are
permuted according to $\sigma$. We are interested in 
the dynamical properties of $\IE_\sigma(\A)$. We will assume
throughout this paper that
$\sigma$ is irreducible and admissible in Veech's sense, see
\S\ref{subsection: admissible} below; otherwise the question 
is reduced to studying an interval exchange on 
fewer intervals. In answer to a conjecture of Keane, it was proved by
Masur \cite{Masur-Keane} and Veech \cite{Veech-zippers} that almost
every $\A$ (with respect to Lebesgue measure on $\R^d_+$) is {\em uniquely
ergodic}, i.e. the only invariant measure for $\IE_\sigma(\A)$ is
Lebesgue measure. On the other hand Masur and Smillie
\cite{MS} showed that the set of non-uniquely ergodic interval exchanges
is large in the sense of Hausdorff dimension. In this paper we
consider the problem of describing  
measures $\mu$ on $\R^d_+$ such that $\mu$-a.e. $\A$ is uniquely
ergodic.

In a celebrated paper \cite{KMS}, Kerckhoff, Masur and Smillie 
solved an important special case of this problem and introduced a
powerful dynamical technique. Given a rational angled polygon 
$\mathcal{P}$, they 
considered the interval exchange $\IE_{\mathcal{P}}(\theta)$ which is
obtained from the 
billiard flow in $\mathcal{P}$ in initial direction $\theta$ as the
return map to a subset of $\mathcal{P}$, and  
proved  that for every $\mathcal{P}$, for almost every
$\theta$ (with respect to Lebesgue 
measure on $[0, 2\pi]$), $\IE_\mathcal{P}(\theta)$ is uniquely
ergodic. The result was later extended by Veech \cite{Veech-fractals},
who showed that Lebesgue measure 
can be replaced by any measure on $[0, 2\pi]$ satisfying a certain
decay condition, for
example the coin-tossing measure on Cantor's 
middle thirds set. 
%It can be checked that fixing a subset of
%$\mathcal{P}$ we 
%obtain a permutation $\sigma$, and 
The above results correspond to the
statement that for a certain $\sigma$ (depending on $\mathcal{P}$),
$\mu$-a.e. $\A \in \R^d_+$ is uniquely ergodic for 
certain measures $\mu$ supported on line segments in $\R^d_+$.

\subsection{Statement of results}
We begin with some definitions. 
%It will be convenient to consider
%interval exchanges whose sum of lengths is not normalized. 
For each 
%$$
$\A \in \R^d_+$
% = \left\{(a_1, \ldots, a_d) \in \R^d :
%\forall i, \, a_i>0 \right \}$$
the interval exchange transformation $\IE_{\sigma}(\A)$ is defined on
$I=I_\A$ 
%the interval 
%$$I=I_\A = \left[0, \sum a_i \right)$$
by setting 
$x_0=x'_0=0$ and for $i=1,
\ldots, d$, 
\eq{eq: disc points2}{
x_i=x_i(\A)
= \sum_{j=1}^i a_j, \ \ \
x'_i =x'_i(\A) 
= \sum_{j=1}^i a_{\sigma^{-1}(j)};% = \sum_{\sigma(k)\leq i} a_k;
}
then for every $x \in I_i=[x_{i-1}, x_i)$ we have 
\eq{eq: defn iet}{
\IE(x)=\IE_{\sigma}(\A)(x)=x-x_{i-1}+x'_{\sigma(i)-1}.% = x-x_i+x'_{\sigma(i)}.
}
%Thus if $Q$ is the alternating bilinear form 
%given by 
%\eq{eq: defn Q1}{
%Q(\E_i, \E_j) = \left\{\begin{matrix}1 && i>j, \, \sigma(i)<\sigma(j) \\ -1
%&& i<j, \, \sigma(i) >\sigma(j) \\ 0 && \mathrm{otherwise}
%\end{matrix} \right. ,
%}
%where $\E_1, \ldots, \E_d$ is the standard basis of $\R^d$, 
%then 
%$$\IE(x) =x+ Q(\A, \E_i).
%$$

Given $\B \in \R^d$, we use \equ{eq: disc points2} to define 
\eq{eq: defn of y(b)}{
y_i = x_i(\B), \ \  y'_i=x'_i(\B)
}
and
%
%
%% set 
%$y_0=y'_0=0,$ and for 
%$i=1, \ldots, d$, 
%\eq{eq: disc points1}{
%y_i=y_i(\B)= \sum_{j=1}^i b_j, \ \ \
%y'_i =y'_i(\B) = \sum_{j=1}^i b_{\sigma^{-1}(j)},% =
%\sum_{\sigma(k)\leq i} b_k,
%}
%and 
\eq{eq: defn L}{L=L_{\A,\B}: I \to \R, \ \ \ 
L(x) = y_{i} - y'_{\sigma(i)} \
\mathrm{for} \  x\in I_i.}
If there are $i,j \in \{1, \ldots , d\}$ (not necessarily distinct)
and $m>0$ such that $\IE^m(x_i)=x_j$ we will say that $(i,j,m)$ is a {\em 
connection} for $\IE$. 
We denote the set of invariant
probability measures for $\IE$ by $\MM_\A$, and 
the set of connections by $\LL_\A$. 

\begin{Def}
We say that $(\A, \B) \in \R^d_+ \times \R^d$ is {\em positive} if 
\eq{eq: positivity}{\int L \, d\mu >0
 \
\ \mathrm{for \ any \ } \mu \in \MM_{\A}
}
and 
\eq{eq: connections positive}{\sum_{n=0}^{m-1}L(\IE^nx_i) > y_i-y_j \ \
\ \mathrm{for \ any \ } (i,j,m) \in \LL_{\A}.
}
\end{Def}

Let $B(x,r)$ denote the interval $(x-r, x+r)$ in $\R$. 
We say that a finite regular Borel measure $\mu$ on $\R$ is {\em
decaying and Federer} if there are positive  
$C, \alpha, D$ such that 
for every $x \in
\supp \, \mu$ and every $0<
\vre, r<1$, 
\eq{eq: decaying and federer}{
\mu \left(B(x,\vre r) \right) \leq C\vre^\alpha \mu\left(B(x,r) \right)
 \ \ \ \mathrm{and} \ \ \ \mu \left( B(x, 3r) \right) \leq D\mu\left(B(x,r) \right).
}
It is not hard to show that Lebesgue measure, and the
coin-tossing measure on Cantor's middle thirds set, are both decaying
and Federer. More constructions of such measures are given in
\cite{Veech-fractals, bad}. 

Let $\dim$ denote Hausdorff dimension, and for $x\in\supp\,\mu$ let
$$
\underline{d}_\mu(x) = \liminf_{r\to 0}\frac{\log 
\mu\big(B(x,r)\big)}{\log r}. 
$$

Now let
\eq{eq: defn epsn}{
\vre_n(\A) = \min \left \{\left|\IE^k(x_i) - \IE^{\ell}(x_j)\right| :
0\leq k, \ell \leq n, 1 \leq i ,j \leq d-1, (i,k) \neq (j,\ell) 
\right\},
}
where $\IE = \IE_\sigma(\A)$. 
We say that $\A$ is
{\em of recurrence type} if $\limsup n\vre_n(\A)
>0$
and 
{\em of bounded type} if $\liminf
n\vre_n(\A)>0$. 
It is known by work of Masur, Boshernitzan, Veech and Cheung that
if $\A$ is of recurrence type then it is uniquely ergodic, but that
the converse does not hold.

We have:
\begin{thm}[Lines]
\name{cor: inheritance, lines}
Suppose $(\A, \B)$ is positive. Then there is $\vre_0>0$ such
that the following hold for $\A(s) = \A+s\B$ and 
for every decaying and Federer measure $\mu$ with $\supp \, \mu
\subset (-\vre_0, \vre_0)$:
\begin{itemize}
\item[(a)]
For $\mu$-almost every $s$, $\A(s)$ is of
recurrence type.   

\item[(b)]
$\dim \, \left \{s \in \supp \, \mu : \A(s) \mathrm{ \ is \ of \ bounded \
type} \right \}  \geq \inf_{x \in \supp \, \mu} \underline{d}_{\mu}(x).$

\item[(c)]
$\dim \left \{s \in (-\vre_0, \vre_0) : \A(s) \mathrm{ \ is \ not \ of \
recurrence \ type } \right \} \leq 1/2.$
\end{itemize}
\end{thm}
In fact we will prove a quantitative strengthening of (a), see \S
\ref{section: nondivergence}.

\begin{thm}[Curves]
\name{cor: ue on curves}
Let $I$ be an interval, let $\mu$ be a decaying and Federer measure on
$I$, and let $\beta: I \to
\R^d_+$ be a $C^2$ curve, such that for $\mu$-a.e. $s \in I$,
$(\beta(s), \beta'(s))$ is positive. Then for $\mu$-a.e. $s \in I$, 
$\beta(s)$ is of recurrence type. 
\end{thm}

%Decay conditions. The bilinear
%form. Cones. 
%General formulations.  

\subsection{The dynamical method: horocycle flow and the lifting problem}
Our approach is the one
employed in \cite{KMS}. 
There is a well-known `lifting procedure' (see \cite{ZK,
Veech-zippers, Masur-Keane}) which associates to $\sigma$
a stratum $\HH$ of translation surfaces, such that for any $\A \in
\R^d_+$ there is a {\em lift} $q \in \HH$, such that $\IE_\sigma(\A)$
is the return 
map to a transversal for the flow along vertical leaves for $q$. It is
crucial for our purposes that in this lifting procedure $q$ is not
uniquely determined by $\A$; indeed $\A$ determines the vertical
measured foliation but there is additional freedom in choosing the
horizontal measured foliation. 

There is an
action of $G=\SL(2, \R)$ on $\HH$. The restriction to the subgroup
$\{g_t\}$ (resp. $\{h_s\}$) of diagonal (resp. upper triangular
unipotent) matrices is called the {\em geodesic} (resp. {\em
horocycle}) flow. Masur \cite{Masur Duke} proved that if $\A$ is not
uniquely ergodic then $\{g_tq: t>0\}$ is divergent in $\HH$. Thus
given a curve $\beta(s)$ in $\R^d_+$, our goal is to construct a curve
$q_s$ in $\HH$ such that $q_s$ is a lift of $\beta(s)$ and $\{g_tq_s :
t>0\}$ is not divergent for a.e. $s$. To rule out divergence, we fix a
large compact $K \subset \HH$ and show that for all large $t$ and most
$s$, $g_tq_s \in K$. That is we require a nondivergence estimate for the curve $s \mapsto
g_tq_s$, asserting that it spends most of its time in a
large compact set independent of $t$. Such nondivergence estimates
were developed for the horocycle flow in \cite{with Yair}; to prove the
theorem it suffices to show that $g_tq_s= h_sq'$ is a horocycle, or
more generally, can be well approximated by horocycles. 

This leads to the question of which directions tangent to $\R^d_+$ are
projections of horocycles. More precisely, for each $\A\in
\R^d_+$, thinking of $\R^d$ as the tangent space to $\R^d_+$ at $\A$,
let $\CC_{\A}$ be the set of $\B \in \R^d$ for which there
exists a lift $q = q(\A, \B)$ in $\HH$ 
such that $h_sq$ is a lift of $\A+s\B$ for all sufficiently small
$s$. It is easy to see that $q(\A, \B)$, if it exists, is uniquely
determined by $\A$ and $\B$. Indeed, the requirement that $q$ is a
lift of $\A$ fixes the vertical 
transverse measure for $q$, and since under the horocycle flow, the
horizontal transverse measure is the derivative (w.r.t. $s$) of the
vertical transverse measure for $h_sq$, one finds that $\B =
\frac{d}{ds}\left( \A + s\B \right)$ determines the horizontal transverse
measure. It remains to characterize the $\A$ and $\B$ for which such a
$q$ exists; our main geometrical results, Theorems \ref{thm:
sullivan1} and \ref{thm: sullivan2}, show that 
\eq{eq: defn cone}{
\CC_{\A} = \{\B \in \R^d: (\A, \B) \mathrm{\ is \ positive} \}.
}
This is an open convex cone which is never empty, and is typically a
half-space. Moreover the 
set of positive pairs is open in the (trivial) tangent bundle
$\R^d_+ \times \R^d$. 

After proving Theorem \ref{thm: sullivan1} we learned from F. Bonahon
that it has a long and interesting history. Similar result were proved
by 
Thurston \cite{Thurston stretch maps} in the context of train tracks
and measured laminations, and by Sullivan \cite{Sullivan} in a very
general context involving foliations. Bonahon neglected to mention his
own contribution \cite{Bonahon}. Our result is a `relative
version' in that we discuss strata of translation surfaces
and need to be careful with the singularities. This accounts for our
condition \equ{eq: connections positive} which is absent from the
previous variants of the result. 
The proof we present here is 
%Our original proof of this result
%employed a cohomological equation standard in topological dynamics
%(see e.g. \cite[Chap. X]{GH});
%here we present a more transparent proof which is 
very close to the
one given by Thurston.

\subsection{Open problems}
The 
developments in the theory of diophantine approximations originating
with Mahler's conjecture motivated the following. 

\begin{conj}[Cf. \cite{cambridge}]
\name{conj: main}
\begin{enumerate}
\item (Lines)
If $\A$ is uniquely ergodic and $\ell$ is a line in
$\R^d_+$ passing through $\A$ then there is neighborhood $\mathcal{U}$
of $\A$ such that almost every $\A' \in \ell \cap \mathcal{U}$
(with respect to Lebesgue measure on $\ell$) is
uniquely ergodic. 
\item (Curves)
If $\A(s)$ is an analytic curve in $\R^d_+$ whose image is not
contained in a proper affine subspace, then for a.e. $s$ (with
respect to Lebesgue measure on the real line), $\A(s)$ is
uniquely ergodic.
\end{enumerate}
\end{conj}
These conjectures are interval exchange analogues of results of
\cite{KM, dima - GAFA}. 
The methods of this paper rely on properties of the horocycle
flow, and as such are insufficient for proving Conjecture \ref{conj:
main}(1). Namely any line 
$\ell$ tangent to the subspace $\REL$ (see \S \ref{section: REL}) can never by
lifted to a 
horocycle path. This motivates a special case of Conjecture \ref{conj:
main}.
\begin{conj}
\name{conj: REL}
Let $\A \in \R^d_+$ be uniquely ergodic. % and let $\mathcal{V}$ be
%small enough so that $\A + \B \in \R^d_+$ for all $\B \in
%\mathcal{V}$. 
Then there is a neighborhood $\mathcal{U}$ 
of $0$ in the 
subspace $\REL$ so that $\A + \mathcal{U} \subset \R^d_+$ and
$\A+\B$ is uniquely ergodic for almost every $\B \in \mathcal{U}$.  

\end{conj}

}
\subsection{Organization of the paper}
We present the proof of Theorem \ref{thm:
sullivan1} in \S3 and of Theorem \ref{thm: hol homeo}, in \S
\ref{section: dev homeo}. We interpret these theorems in the language of
interval exchanges in \S5. This interpretation furnishes a link
between line segments in the space of interval exchanges, and
horocycle paths in a corresponding stratum of translation surfaces: it
turns out that the line segments which may be lifted to horocycle
paths form a cone in the tangent space to interval exchange space, and
this cone can be explicitly described in terms of a bilinear form
studied by Veech. 
We begin \S6 with a brief discussion of Mahler's question in
diophantine approximation, and the question it motivates for interval
exchanges. We then state in detail our results for
generic properties of interval exchanges. The proofs of these results occupy
\S7--\S10. Nondivergence results for horocycles make it possible to
analyze precisely the properties of interval exchanges along a
line segment, in the cone of directions described in \S \ref{section: REL}. To obtain
information about curves we approximate them by line segments, and
this requires the quantitative nondivergence results obtained in
\cite{with Yair}. 
In \S11 we prove our results concerning real REL. These sections
may be read independently of \S6--\S10. We conclude with a discussion
which connects real REL with some of the objects encountered in
\S7-\S10.

\subsection{Acknowledgements}
We thank John Smillie for many valuable discussions.
We thank Francis Bonahon for pointing out the connection between our
Theorem \ref{thm: sullivan1} and previous work of Sullivan and
Thurston. The
authors were supported by  
BSF grant 2004149, ISF grant 584/04 and NSF grant DMS-0504019.

\section{Preliminaries}\name{section: prelims}
In this section we recall some standard facts and set our
notation. For more information we refer the reader to 
\cite{MT, Zorich survey} and the references therein. 
\subsection{Strata of translation surfaces}\name{subsection: strata}
Let $S, \, \Sigma = (\xi_1, \ldots, \xi_k), \, \mathbf{r}=(r_1,
\ldots, r_k)$ be as in
the introduction. A {\em translation 
structure} (resp., a {\em marked translation structure}) of type
$\mathbf{r}$ on $(S, \Sigma)$
is an equivalence class of $(U_{\alpha}, \varphi_{\alpha})$, where:
\begin{itemize}
%\item $\Sigma = \{\xi_1, \ldots, \xi_k\} \subset S$ is the set
%of {\em singularities}; 
\item
$(U_{\alpha}, \varphi_{\alpha})$ is an atlas of charts for $S\sm
\Sigma$;
\item
the transition functions $\varphi_{\beta} \circ \varphi^{-1}_{\alpha}$
are of the form 
$$\R^2 \ni \vec{x} \mapsto \vec{x}+c_{\alpha, \beta};$$
\item
around each $\xi_j \in
\Sigma$ the charts glue together to form a cone point with cone angle 
$2\pi(r_j+1)$. 

\end{itemize}
By definition  $(U_{\alpha}, \varphi_{\alpha}), \ (U'_{\beta},
\varphi'_{\beta})$ are equivalent if there is an orientation preserving  
homeomorphism $h: S\to S$ (for a marked structure, isotopic to the
identity via an isotopy fixing $\Sigma$), fixing all points of $\Sigma$, such that 
$(U_{\alpha}, \varphi_{\alpha})$  
is compatible with  $\left(h(U'_{\beta}), \varphi'_{\beta} \circ h^{-1}\right).$
Thus the equivalence class $\bq$ of a marked translation surface is a subset
of that of the corresponding translation surface $q$, and we will say
that $\bq$ is obtained from $q$ by {\em specifying a marking}, $q$
from $\bq$ by {\em forgetting the marking}, and
write $q =\pi(\bq)$.
Note that our convention is that singularities are labelled.

Pulling back $dx$ and $dy$ from the coordinate charts we obtain two
well-defined closed 1-forms, which we can integrate along any path
$\alpha$ on $S$. If $\alpha$ is a cycle or has endpoints in $\Sigma$
(a relative cycle), then the result, which we denote by 
$$\hol(\alpha, \bq) = \left(\begin{matrix} x(\alpha, \bq) \\
y(\alpha, \bq) \end{matrix} \right) \in \R^2,$$
depends only on the homology class of $\alpha$ in $H_1(S, \Sigma)$. We
let $\hol(\bq) = \hol(\cdot, \bq)$ be the corresponding element of $H^1(S, \Sigma;
\R^2)$, with coordinates $x(\bq), y(\bq)$ in $H^1(S, \Sigma; \R)$. 

A {\em saddle connection} for $q$ is a straight segment which connects
singularities and does not contain singularities in its interior.

 The set of all (marked) translation surfaces on $(S,
\Sigma)$ of type $\mathbf{r}$ is called the {\em stratum of
(marked) translation surface of type $\mathbf{r}$} and is denoted by
$\HH(\mathbf{r})$ (resp. $\til \HH(\mathbf{r})$). We have suppressed
the dependence on $\Sigma$ from the notation since for a given type
$\mathbf{r}$ there is an
isomorphism between the corresponding set of translation surfaces on
$(S, \Sigma)$ and on $(S, \Sigma')$ for any other finite subset
$\Sigma' = \left(\xi'_1, \ldots, \xi'_k\right)$. 
%We denote by $\Sigma_\HH$ the set of singularties of some (any) $q \in
%\HH$. 

The map $\hol: \til \HH \to H^1(S, \Sigma; \R^2)$ just defined gives
local charts for $\til \HH$, endowing it (resp. $\HH$) with the
structure of an affine manifold (resp. orbifold). 
To see how this works, fix a triangulation $\tau$ of $S$ with vertices
in $\Sigma$. Then $\hol(\bq)$ associates a vector in the plane to each
oriented edge in $\tau$, and hence associates an oriented Euclidean
triangle to each oriented triangle of $\tau$. If all the orientations
are consistent, then a translation structure with the same holonomy as
$\bq$ can be realized explicitly by gluing the Euclidean triangles to
each other. Let $\til \HH_\tau$ be the set of all translation
structures obtained in this way (we say that $\tau$ is {\em realized
  geometrically} in such a structure). Then the restriction $\hol:
\til \HH_{\tau} \to H^1(S, \Sigma; 
\R^2)$ is injective and maps onto an open subset. Conversely every
$\bq$ admits some geometric triangulation (e.g. a Delaunay
triangulation as in \cite{MS}) and hence $\HH$ is covered
by the $\HH_{\tau}$, and so these provide an atlas for a linear
manifold structure on $\til \HH$. We should remark that a topology on
$\til \HH$ can be defined independently of this, by considering nearly
isometric comparison maps between different translation structures,
and that this topology is the same as that induced by the charts of
hol.  

Let $\Mod(S, \Sigma)$ denote the mapping class group, i.e. the
orientation preserving homeomorphisms of $S$ fixing $\Sigma$
pointwise, up to an isotopy fixing $\Sigma$. 
The map hol is $\Mod(S, \Sigma)$-equivariant. This means that for
any $\varphi \in \Mod(S, \Sigma)$, $\hol(\bq \circ \varphi) =
\varphi_* \hol(\bq)$, which is nothing more than the linearity of the
holonomy map with respect to its first argument. 

One can show that the $\Mod(S, \Sigma)$-action on $\til \HH$ 
is properly discontinuous. Thus $\HH = \til \HH /\Mod(S, \Sigma)$ is a
linear orbifold and $\pi: \til \HH \to \HH$ is an orbifold covering
map. 
Since $\Mod(S, \Sigma)$ contains a finite index torsion-free subgroup
(see e.g. \cite[Chap. 1]{Ivanov}), there is a finite cover
$\hat{\HH} \to \HH$ such that $\hat{\HH}$ is a manifold, and we have
\eq{eq: dimension of HH}{
\dim \HH = \dim \til \HH = \dim \hat{\HH}= \dim H^1(S, \Sigma; \R^2)= 2(2g+k-1).
}
The Poincar\'e Hopf index theorem implies that 
\eq{eq: Gauss Bonnet}{\sum r_j = 2g-2.}
%See \cite{EMZ, MT, with Yair}
%for more details.
%We will always assume that $\Sigma \neq \varnothing$ which by \equ{eq:
%Gauss Bonnet} implies that $g \geq 2.$ 

There is an action of $G = \SL(2,\R)$ on $\HH$ and on $\til \HH$ by post-composition on each
chart in an atlas. The projection $\pi : \til \HH \to \HH$ is $G$-equivariant. 
The $G$-action is linear in the homology coordinates, namely, given a
marked translation surface structure $\bq$ and $\gamma \in 
H_1(S, \Sigma)$, and given $g \in G$, we have
\eq{eq: G action}{
\hol(\gamma, g\bq) = g \cdot \hol(\gamma, \bq),
}
where on the right hand side, $g$ acts on $\R^2$ by matrix
multiplication. 

We will write 
\[
%h_s 
%= \left(\begin{array}{cc} 1 & s \\ 0 & 1 
%\end{array}
%\right),
% \ \ \ \ \, \ 
g_t = \left(\begin{array}{cc} e^{t/2} & 0 \\ 0 & e^{-t/2} 
\end{array}
\right), \ \ \ \ \ 
r_{\theta}=
\left(\begin{array}{cc}
\cos \theta & -\sin \theta \\
\sin \theta & \cos \theta
\end{array}
\right).
\]

\ignore{
\combarak{We may or may not the discussion of the entire moduli space
below, depending on what we can say 
about `nondivergence along REL'.}
For a fixed genus $g$, the different strata of genus $g$ glue together
to form the {\em moduli space of translation surfaces of genus
$g$}. This space, which we denote by $\Omega_g$, is
sometimes referred to as the {\em bundle of holomorphic $1$-forms over
moduli space.} It is also a non-compact orbifold in which the strata
are locally open sub-orbifolds. The $G$-action extends continuously to
an action on $\Omega_g$. The precise definition of the orbifold
topology on $\Omega_g$ will not play a role in this paper.

}

\subsection{Interval exchange transformations}
\name{subsection: iets}
Suppose $\sigma$ is a permutation on $d$ symbols. 
For each 
$$\A \in \R^d_+ = \left\{(a_1, \ldots, a_d) \in \R^d :
\forall i, \, a_i>0 \right \}$$
we have an interval exchange transformation $\IE_{\sigma}(\A)$ defined by
dividing the interval 
$$I=I_\A = \left[0, \sum a_i \right)$$
into subintervals of lengths $a_i$ and permuting them
according to $\sigma.$ It is
customary to take these intervals as closed on the left and open on
the right, so that the resulting map has $d-1$ discontinuities and is
left-continuous. 
 
More precisely, set 
$x_0=x'_0=0$ and for $i=1,
\ldots, d$, 
%Barak: new change here, in the labelling
\eq{eq: disc points2}{
x_i=x_i(\A)
= \sum_{j=1}^i a_j, \ \ \
x'_i =x'_i(\A) 
= \sum_{j=1}^i a_{\sigma^{-1}(j)} = \sum_{\sigma(k)\leq i} a_k;
}
then for every $x \in I_i=[x_{i-1}, x_i)$ we have 
\eq{eq: defn iet}{
\IE(x)=\IE_{\sigma}(\A)(x)=x-x_{i-1}+x'_{\sigma(i)-1} = x-x_i+x'_{\sigma(i)}.
}
In particular, if (following Veech \cite{Veech-measures}) we let $Q$ be
the alternating bilinear form  
given by 
\eq{eq: defn Q1}{
Q(\E_i, \E_j) = \left\{\begin{matrix}1 && i>j, \, \sigma(i)<\sigma(j) \\ -1
&& i<j, \, \sigma(i) >\sigma(j) \\ 0 && \mathrm{otherwise}
\end{matrix} \right. 
}
where $\E_1, \ldots, \E_d$ is the standard basis of $\R^d$, 
then 
$$\IE(x) -  x= Q(\A, \E_i).
%\sum_{j
%> i,\, \sigma(j) < \sigma(i)}a_j-\sum_{j < i,\, \sigma(j) > \sigma(i)}
%a_j  .
$$
%and
%\eq{eq: disconts}{
%\IE(x_i) = x'_{\sigma(i)}.
%}
An interval exchange $\IE: I \to I$ is said to be {\em minimal} if
there are no proper closed $\IE$-invariant subsets of $I$.
%We will denote by $\MM$ the
%set of Borel probability measures on $I$, by 
%$\MM_\IE$ the $\IE$-invariant measures, and by $\LL_\IE$ the set of
%connections for $\IE$. 
We say
that $\IE$ is {\em uniquely ergodic} if the only invariant measure for
$\IE$, up to scaling, is Lebesgue measure. 
%$\MM_\IE$ contains the
%Lebesgue  
%measure only. We say that $\IE$ is {\em without connections}
%or {\em i.d.o.c.} 
%if $\LL_\IE =
%$\varnothing.$  
We will say that $\A \in \R^d_+$ is minimal or uniquely ergodic if
$\IE_{\sigma}(\A)$ is.  

Below we will assume that 
$\sigma$ is {\em irreducible}, i.e. there is no $k<d$ such that
$\sigma$ leaves the subset $\{1, \ldots, 
k\}$ invariant, and {\em admissible} (in Veech's sense), see
\S \ref{subsection: transversal}.
For the questions about interval exchanges which we will
study, these hypotheses on $\sigma$ entail no loss of generality. 

It will be helpful to consider a more general class of maps which we
call {\em generalized interval exchanges.} Suppose $J$ is a finite
union of intervals. A generalized interval exchange $\IE: J \to J$ is
an orientation preserving  piecewise isometry of $J$, i.e. it is a map
obtained by subdividing $J$ into finitely many subintervals and
re-arranging them to obtain $J$. These maps are not often considered because
studying their dynamics easily reduces to studying interval
exchanges. However they will arise naturally in our setup.
%: suppose
%$\IE: I \to I$ is an interval exchange, and $J \subset I$ is a finite
%union of subintervals which intersect every $\IE$-trajectory. Then there is
%an induced map $\bar{\IE}: J \to J$ obtained by interating $\IE$ until
%its first return to $J$. This is a generalized interval
%exchange. The equivalence relation on generalized interval
%exchanges generated by $\IE \sim \bar{\IE}$, as well as translating
%intervals in $J$ and rescaling, is
%called {\em Rauzy equivalence}. 
%\combarak{ Do we need Rauzy equivalence?}

\subsection{Measured foliations, transversals, and interval exchange
induced by a translation surface}\name{subsection: transversal}
Given a surface $S$ and a finite $\Sigma \subset S$, a 
{\em singular foliation (on $S$ with singularities in $\Sigma$)} is a
foliation $\FF$ on $S \sm \Sigma$ such that for any $z \in \Sigma$ there
is $k=k_z \geq 3$ such that $\FF$ extends to form a $k$-pronged
singularity at $z$. A singular foliation $\FF$ is orientable if there
is a continuous choice of a direction on each leaf. If $\FF$ 
is orientable then $k_z$ is even for all
$z$. Leaves which meet the singularities are called {\em critical.}  
A {\em transverse measure} on a singular foliation $\FF$ is a family
of measures defined on arcs transverse to the 
foliation and invariant under restriction to subsets and isotopy
through transverse arcs. A {\em measured foliation} is a singular
foliation equipped with a transverse measure, which we further require
has no atoms and has full support (no open arc has measure zero). We will only consider
orientable singular foliations which 
can be equipped with a transverse measure. This implies that the
surface $S$ is decomposed into finitely many domains on each of which
the foliation is either minimal (any ray is dense) or periodic (any
leaf is periodic). A periodic component is also known as a {\em
  cylinder}. These components are separated by saddle
connections.

Given a flat surface
structure $\bq$ on $S$, pulling back via charts the vertical and
horizontal foliations on $\R^2$ give oriented singular foliations on
$S$ called the vertical and horizontal foliations,
respectively. Transverse measures are defined by integrating the
pullbacks of $dx$ and $dy$, i.e. they correspond to the holonomies
$x(\bq)$ and $y(\bq)$. Conversely, given two oriented everywhere
transverse measured foliations 
on $S$ with singularities in $\Sigma$, one obtains an atlas of charts 
as in \S \ref{subsection: strata} by integrating the measured
foliation. I.e., for each $z \in S \sm \Sigma$, a local coordinate
system is obtained by taking a simply connected neighborhood $U
\subset S \sm \Sigma$ of $z$ and defining the two coordinates of
$\varphi(w) \in \R^2$ to be the integral of the measured foliations
along some path connecting $z$ to $w$ (where the orientation of the
foliations is used to determine the sign of the integral). One can
verify that this procedure produces an atlas with the required
properties.

We will often risk confusion by using the 
symbol $\FF$ to denote both a measured foliation and the
corresponding singular foliation supporting it. A singular
foliation is called {\em minimal} if any noncritical leaf is dense,
and {\em uniquely ergodic} if there is a unique (up to scaling)
transverse measure on $S$ which is supported on noncritical
leaves. Where confusion is unavoidable we say that  
$q$ is minimal or uniquely ergodic if its vertical foliation is. 

At a singular point $p \in \Sigma$ with $k$ prongs, a small
neighborhood of $p$ divides into $k$ foliated disks, glued along
leaves of $\FF$, which we call {\em foliated half-disks}. A foliated
half-disk is either contained in a single periodic component or in a
minimal component. 

Now let $\FF$ be a singular foliation on a surface $S$ with
singularities in $\Sigma$. We will consider three kinds of
transversals to $\FF$. 
\begin{itemize}

\item
We define a {\em transverse system} to be an injective map
$\gamma: J \to S$ where $J$ is a finite union of intervals $J_i$, the
restriction of $\gamma$ to each interval $J_i$ is a smooth embedding,
the image 
of the interior of $\gamma$ intersects every non-critical leaf of $\FF$ 
transversally, and does not intersect $\Sigma$. 
\item
We define a {\em
judicious curve} to be a transverse system $\gamma: J \to S$ with $J$
connected, such that $\gamma$ begins and ends at
singularities, and the interior of $\gamma$ intersects all leaves
including critical ones.
\item
We say a transverse system $\gamma$ is {\em special} if all its
components are of the following types (see Figure \ref{figure:
  special}): 
\begin{itemize}
\item For every foliated half-disk $D$ of a singularity $p \in
  \Sigma$ which is contained in a minimal component, there is a
  component of $\gamma$ whose interior intersects $D$ and terminating
  at $p$. This component of $\gamma$ meets $\Sigma$ at only one
  endpoint. 
\item
For every periodic
component (cylinder) $P$ of $\FF$, $\gamma$ contains one arc crossing $P$
and joining two singularities on opposite sides of $P$. 

\end{itemize}
\end{itemize}

\begin{figure}[htp] \name{figure: special}
\center{\includegraphics{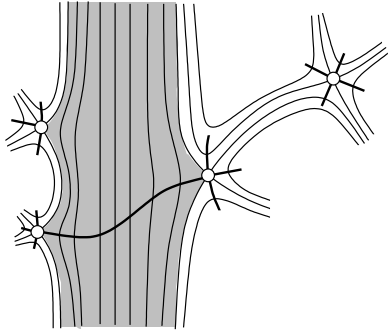}}
\caption{A {\em special} transverse system cuts across periodic components
and into minimal components.}
\end{figure}

Note that since every non-critical leaf in a minimal component is
dense, the non-cylinder edges of a special transverse system can be
made as short as we like, without destroying the property that they
intersect every non-critical leaf.

In each of these cases, we can
parametrize points of $\gamma$ using the transverse measure, and
consider the first return map to $\gamma$ when moving up along
vertical leaves. When $\gamma$ is a judicious curve, this
is an interval exchange transformation which we denote by $\IE(\FF, \gamma)$,
or by $\IE(q, \gamma)$ when $\FF$ is the vertical foliation of
$q$. Then there is a unique choice of $\sigma$ and $\A$ with $\IE(\FF,
\gamma) = \IE_{\sigma}(\A)$, and with $\sigma$ an irreducible
admissible permutation. The corresponding number of intervals is 
\eq{eq: for dim count}{
d=2g+|\Sigma|-1;
}
note that $d = \dim H^1(S, \Sigma; \R) = \frac12 \dim \HH$ if $q \in
\HH$.
The return
map to a transverse system is a generalized interval exchange. We
denote it also by $\IE(\FF, \gamma)$. In each of the 
above cases, any non-critical leaf returns to $\gamma$ infinitely many
times. 

%Since a judicious curve has endpoints in
%$\Sigma$, $\gamma$
%represents an element in $H_1(S, \Sigma)$. 
If $\gamma$ is a transversal to $\FF$ then
$\IE(\FF, \gamma)$ completely determines the transverse measure on
$\FF.$ In particular the vertical foliation of $q$ is 
uniquely ergodic (minimal) if and only if $\IE(q, \gamma)$ is for some
(any) transverse system $\gamma$. 

There is an inverse construction which associates with an irreducible
permutation $\sigma$ a surface $S$ of genus $g$, and a $k$-tuple $\br$
satisfying \equ{eq: Gauss Bonnet}, such that the 
following holds. For any $\A \in \R^d_+$ there is a translation
surface structure $q \in \HH(\br)$, and a transversal $\gamma$ on
$S$ such that $\IE_\sigma(\A) = \IE(q, \gamma).$ Variants of this
construction can be found in \cite{ZK, Masur-Keane,
Veech-zippers}. Veech's admissibility condition amounts to requiring
that there is no transverse arc on $S$ for which $\IE(q, \gamma)$
has fewer discontinuities. 
Fixing $\sigma$, we say that a flat structure $q$ on $S$ is {\em a
lift} of $\A$ if there is a 
judicious curve $\gamma$ on $S$ such that $\IE(q,\gamma) =
\IE_\sigma(\A)$. It is known that for any $\sigma$, there is a stratum
$\HH$ such that all lifts of all $\A$ lie in $\HH$. We call it the
{\em stratum corresponding to $\sigma$.}

\ignore{

\subsection{Constructing a translation surface from an interval
exchange}\name{subsection: admissible}
There is an inverse construction which associates with an irreducible, admissible
permutation $\sigma$ a surface $S$ of genus $g$, and a $k$-tuple $r_1,
\ldots, r_k$ satisfying \equ{eq: Gauss Bonnet}, such that the
following holds. For any $\A \in \R^d_+$ there is a translation
surface structure $q \in \HH(r_1, \ldots, r_k)$, and a transversal $\gamma$ on
$S$ such that $\IE_\sigma(\A) = \IE(q, \gamma).$ Variants of this
construction can be found in \cite{ZK, Masur-Keane,
Veech-zippers}. Masur's construction will be convenient for us in the
sequel, and we recall it now. 

Set
\eq{eq: Masur choice}{b_i = \sigma(i) - i,}
and in analogy with \equ{eq: disc points2}, set $y_0=y'_0=0,$ and for 
$i=1, \ldots, d$, 
\eq{eq: disc points1}{
y_i=y_i(\B)= \sum_{j=1}^i b_j, \ \ \
y'_i =y'_i(\B) = \sum_{j=1}^i b_{\sigma^{-1}(j)} =
\sum_{\sigma(k)\leq i} b_k.
}
Then it is immediate from the irreducibility of $\sigma$ that $y_i>0$
and $y'_i<0$ for  $i=1, \ldots, d-1$. 
Set $x_i=x_i(\A),\
x'_i = x'_i(\A)$ as in \equ{eq: disc points2},
and 
\eq{eq: points in plane}{
P_i = (x_i, y_i), \ \  P'_i = (x'_i, y'_i).}
By the above the $P_i$ (resp. $P'_i$) lie above (resp. below) the
x-axis, and $P_0=P'_0, P_d=P'_d.$
Let $f$ (resp. $g$) be the piecewise linear functions
whose graph is the curve passing through all the points $P_i$
(resp. $P'_i$), and let 
\eq{eq: defn mathcalP}{
\mathcal{P} = \left\{(x,y)
: x \in I,\, g(x) \leq y \leq f(x) \right 
\}.
}
Then $\mathcal{P}$ is the closure of 
a connected domain with polygonal boundary, see Figure \ref{figure: Masur}.
%, and 
%$\widehat \IE$ maps the
%upper boundary of $\mathcal{P}$ to the lower boundary in a one-to-one
%fashion, using $d$ translations. 
We may identify $(x,f(x))$ in the
upper boundary to $(\IE x,g(\IE x))$ in the lower
boundary to obtain a translation surface $q$. These
identifications induce an equivalence relation on 
$\{P_0, \ldots, P_d, P'_0, \ldots, P'_d\}$, and one may compute
\combarak{the computation is in the tex file if we want to say more} 
the total angle around each equivalence class. By definition, $\sigma$ is
{\em admissible in Veech's sense} if the total angle around each is more than $2\pi$,
so that each $P_i$ and $P'_j$ is identified with a singularity. 
We have explicitly constructed an atlas of charts for a flat structure
$q$ 
on a surface of genus $g$, where $g$ is computed via \equ{eq: Gauss
Bonnet} from the angles around singularities. 
\ignore{
To compute these
equivalence classes and the 
total angle around each, suppose for $i \in \{1, \ldots, d-1\}$ that we are given a
downward-pointing vector based at $P_i$. If we rotate
this vector through an angle of $2\pi$, using boundary identifications so
that the vector is always pointing into the interior of $\mathcal{P}$,
we obtain a downward-pointing vector based at $P_{\til \sigma(i)}$,
where $\til \sigma : \{1, 
\ldots, d-1\} \to \{1, \ldots, d-1\}$ is an auxilliary permutation
defined by
$$\til
\sigma(i) = \left\{ \begin{matrix}
\sigma^{-1}(\sigma(1)-1) && \sigma(i+1) =1 \\
\sigma^{-1}(d) && \sigma(i+1) = \sigma(d)+1 \\
 \sigma^{-1}(\sigma(i+1)-1)  &&
\mathrm{otherwise}
 \end{matrix} \right.
$$
(note this differs slightly from the auxilliary permutation defined in
\cite{Veech-zippers}). The singularities are the equivalence classes
of $P_i$ under the relation generated by $i \sim \til \sigma(i)$ and
$0 \sim \sigma^{-1}(1)-1, d \sim \sigma^{-1}(d).$ The total angle
around the a singularity is $2\pi k$ where $k$ is the length of the
corresponding cycle for $\til \sigma$. We will say that
$\sigma$ is {\em admissible} if all cycles for $\til
\sigma$ contain at least two elements, i.e., $r_j$ as in \S 2.2
satisfy  $r_j\geq 1$ for
all $j$.

Thus $q_0 \in \HH = \HH(\mathbf{r})$ where $\mathbf{r}=(r_1, \ldots,
r_k)$. 
, and in particular, by \equ{eq: Gauss Bonnet} we have
constructed a topological surface $S$ of genus at least 2. 
}
Let $\gamma$ be a path passing from left to right through the
interior of
$\mathcal{P}$. 
%Specifying a marking, let $\gamma$ be a
%corresponding path on $S$. 
By construction $\gamma$ is judicious for $q$, and 
$\IE(q, \alpha) = \IE_{\sigma}(\A_0)$.

\begin{figure}[htp] \name{figure: Masur}
\input{masur.pstex_t}
\caption{Masur's construction: a flat surface constructed from an
interval exchange via a polygon.}
\end{figure}

Fixing $\sigma$, we say that a flat structure $q$ on $S$ is {\em a lift} of $\A$ if there is a
judicious curve $\gamma$ on $S$ such that $\IE_\sigma(q,\gamma) =
\IE_\sigma(\A)$. It is known that for any $\sigma$, there is a stratum
$\HH$ such that all lifts of all $\A$ lie in $\HH$. We call it the
{\em stratum corresponding to $\sigma$.}
\ignore{
Clearly $\mathcal{P}$, with the edge identifications described above,
gives a cellular decomposition of $S$ containing the points of $\Sigma$
as vertices (0-cells), which we will continue by $\mathcal{P}$. In
particular there is a natural identification of $H^1(S, \Sigma; \R)$ with
$H^1(\mathcal{P}, \Sigma; \R)$. It is easy to see (and also follows
via \equ{eq: for dim count} by a dimension count) that the edges of $\mathcal{P}$ are
linearly independent cycles in $H_1(\mathcal{P}, \Sigma; \R)$. Thus
there is no distinction between $H^1(\PP, \Sigma; \R)$ and $C^1(\PP,
\Sigma; \R)$ (the space of 1-cochains). 
}

We have shown that
there is a lift for any $\A$. It will be important for us that there
are many possible lifts; indeed in Masur's construction above, there
are many possible choices of $\B$ which would have worked just as well
as \equ{eq: Masur choice}. The set of all such $\B$'s will be
identified in
\S\ref{section: pairs}.

%\subsection{Measured laminations, train tracks}
%Put in the definitions. Define pulling tight -- if needed. 

}

\subsection{Decomposition associated with a transverse system}
\name{subsection: decompositions}
Suppose $\FF$ is an oriented singular foliation on $(S, \Sigma)$ and
$\gamma: J \to S$ is a transverse system to 
$\FF$. There is an associated 
cellular decomposition $\BB=\BB(\gamma)$ of $(S, \Sigma)$ defined as
follows. Let $\IE = \IE(\FF, \gamma)$ be the generalized interval
exchange corresponding to $\gamma$. 

The 2-cells in $\BB$ correspond to the intervals of continuity 
of $\IE$. For each such interval $I$, the corresponding cell consists
of the union of interiors of leaf intervals beginning at $I$ and
ending at $\IE(I)$. Hence it fibers over $I$ and hence has the
structure of an open topological rectangle. The boundary of a 2-cell
lies in $\gamma$ and in certain segments of leaves, and the union of
these form the 1-skeleton. The 0-skeleton consists of points of
$\Sigma$, endpoints of $\gamma$, and points of discontinuity of $\IE$
and $\IE^{-1}$. Edges of the 1-skeleton lying on $\gamma$ will be
called {\em transverse edges} and edges lying on $\FF$ will be called {\em
  leaf edges}. Leaf edges inherit an orientation from $\FF$ and
transverge edges inherit the transverse orientation induced by $\FF$. 

Note that opposite boundaries of a 2-cell could come from the same
points in $S$: a
particular example occurs for a special transverse system, where if
there is a transverse edge crossing a cylinder, that cylinder is
obtained as a single 2-cell with its bottom and top edges
identified. Such a 2-cell is called a {\em cylinder cell}. 

It is helpful to consider a {\em spine} for $\BB$, which we denote
$\chi = \chi(\gamma)$, and is composed of the 1-skeleton of $\BB$
together with one leaf $\ell_R$ for every rectangle $R$, traversing
$R$ from bottom to top. The spine is closely related to Thurston's
train tracks; indeed, if we delete from $\chi$ the singular points
$\Sigma$ and the leaf edges that meet them, and collapse each element
of the transversal $\gamma$ to a point, we obtain a train track that
`carries $\FF$' in Thurston's sense. But note that keeping the deleted
edges allows us to keep track of information relative to $\Sigma$, and
in particular, to keep track of the saddle connections in $\FF$.

\ignore{
Write $\gamma_1,
\ldots, \gamma_k$ for the intervals of continuity of $\IE(\FF, \gamma)$; these
are by definition connected subarcs which are relatively open in
$\gamma$. Let $S_i$ be the union of pieces of leaves starting in the 
interior of $\gamma_i$ and going along $\FF$ in the positive direction until the next
meeting with $\gamma$. The interior of each $S_i$ does not contain any
point of $\Sigma$ since $\IE(\FF, \gamma_i)$ is continuous, hence is
homeomorphic to a disk. 
The 2-cells of $\BB(\gamma)$ are the closures of the $S_i$'s. The
1-cells are either pieces of leaves of $\FF$, or segments in $\gamma$
which are on the boundary of the $S_i$'s. We call the former {\em leaf
edges} and the latter {\em transverse edges}. We orient the leaf edges
according to the orientation of $\FF$, and the transverse edges so
they always cross the foliation from left to right. We include 0-cells so that
a 1-cell cannot 
contain a nontrivial segment in both $\FF$ and $\gamma$.

We will need to modify $\BB(\gamma)$ when $\FF$ has periodic
leaves. In this case, the periodic leaves form finitely many annuli,
each of which is a union of 2-cells of $\BB(\gamma)$. In each such
annulus we modify $\BB(\gamma)$ so that the annulus contains exactly
one cell which goes fully around the annulus, that is,
is bounded by two periodic leaf edges and one transverse edge bounding
the cell from both sides \combarak{here we need a figure}. We
call such a cell a {\em cylinder cell} 
and the transverse edge running inside it, a {\em
cylinder transverse edge}. Note that by construction, 
a periodic leaf is contained in the interior of a cylinder cell if and
only if it intersects $\gamma$ exactly once.

The 0-cells include all points of $\Sigma$, hence any cohomology class
$\beta \in 
H^1(S, \Sigma; \R)$ can be represented by a closed 1-cochain on
the 1-skeleton $\BB_1 \subset \BB(\gamma)$; i.e. a function
$\hat{\beta}$ defined on $\BB_1$ which 
vanishes on boundaries of 2-cells. Such a representative is
not canonical, since any two may differ by a 1-coboundary.

\begin{prop}
\name{prop: cell decomposition}
Let $S$ be a surface and let $\FF$ be a measured foliation on $S$ with
singularities in $\Sigma$. Let $\gamma_0$ be a transverse
system and let $\hat{\beta}$ be a 1-cocycle on $\BB(\gamma_0)$ 
which is positive on leaf edges and vanishes on transverse
edges, except possibly on cylinder transverse edges. 

Then there is a flat surface structure $q_0$ on $S$ for which $\FF$ is
the vertical measured foliation, and whose horizontal measured
foliation represents the same
element in $H^1(S, \Sigma; \R)$ as 
$\hat{\beta}$.

\end{prop}

\begin{proof}
Let $\hat{\alpha}$ be the 1-cocycle represented by the measured
foliation $\FF$, so that $\hat{\alpha}$ vanishes on leaf edges. To any edge
$e \in \BB(\gamma)$ we assign coordinates $(\hat{\alpha}(e),
\hat{\beta}(e))$. 
Let $R$ be a cell of $\BB(\gamma_0)$. It has 2
pairs of opposite sides: a pair of {\em horizontal sides} made of
transverse edges going in opposite
directions, and a pair of {\em vertical sides} made of
leaf edges going in opposite directions. The assumption on
$\hat{\beta}$ ensures that vertical sides are
assigned vertical vectors, and the orientation of these vectors
respects the orientation on $\BB(\gamma)$. If $R$ is not a cylinder
cell, the assumption on $\hat{\beta}$ also ensures that horizontal
sides are assigned horizontal vectors respecting the orientation. 
Since $\hat{\alpha},
\hat{\beta}$ are cocycles, opposite sides have the same length. That is
$R$ has 
the geometry of a Euclidean rectangle. Similarly, if $R$ is a cylinder
cell, it has the geometry of a Euclidean parallelogram with
vertical sides. 
By linearity of $\hat{\alpha}, \hat{\beta}$, the rectangles and
parallelograms 
can be glued to each other consistently. This produces an explicit
atlas of charts for $q_0$ with the advertized properties.
\end{proof}

}

\combarak{This was not precise enough. I am guessing that $Q(\A, \B)$
is the intersection of $\A$ and $\B$ when $\A$ is thought of as an
element of $H_1(S, \Sigma)$ and $\B$, as an element of $H^1(S, \Sigma)
\cong H_1(S \sm \Sigma)$. Is this right?}

\combarak{A remark on the relation of $\BB(\gamma)$ to train tracks
belongs here.}

\subsection{Transverse cocycles, homology and
cohomology}\name{subsection: cohomology}
We now describe cycles supported on a foliation $\FF$ and their dual
cocycles. 

We will see that a transverse measure $\mu$ on $\FF$ defines an
element $[c_\mu] \in H_1(S, \Sigma)$, expressed concretely as a cycle
$c_\mu$ in the spine of $\chi(\gamma)$ of a transverse
system. Poincar\'e duality identifies $H_1(S, \Sigma)$ with $H^1(S \sm
\Sigma)$, and the dual $[d_\mu]$ of $[c_\mu]$ is represented by the
cochain corresponding to integrating the measure $\mu$. 

If $\mu$ has no atoms, then in fact we obtain $[c'_\mu] \in H_1(S \sm
\Sigma)$, and its dual $[d'_mu]$ lies in $H^1(S, \Sigma)$. The natural
maps $H_1(S \sm \Sigma) \to H_1(S, \Sigma)$ and $H^1(S, \Sigma) \to
H^1(S \sm \Sigma)$ take $[c'_\mu]$ to $[c_\mu]$ and $[d'_\mu]$ to
$[d_\mu]$ respectively. 

We will now describe these constructions in more detail.   

Let $\gamma$ be a transverse system and $\chi(\gamma)$ the spine of
its associated complex $\BB(\gamma)$ as above. Given $\mu$ we define a
1-chain on $\chi$ as follows. For each rectangle $R$ whose bottom side
is an interval $\kappa$ in $\gamma$, set $\mu(R) = \mu(\interior \,
\kappa)$ (using the interior is important here because of possible
atoms in the boundary). For each leaf edge $f$ of $\BB$, set 
$\mu(\{f\})$ to be the transverse measure of $\mu$ across $f$ (which
is 0 unless the leaf $f$ is an atom of $\mu$). The 1-chain 
$x = \sum_R \mu(R) \ell_R + \sum_f \mu(\{ f\}) f + z$ may not be a
cycle, but we note that invariance of $\mu$ implies that, on each
component of $\gamma$, teh sum of measures taken with sign (ingoing
vs. outgoing) is 0, so that $\partial x$ restricted to each component
is null-homologous. Hence by `coning off' $\partial x$ in each
component of $\gamma$ we can obtain a cycle of the form:

\eq{eq: explicit form cycle}{
c_\mu = \sum_{R \ \mathrm{rectangle}} \mu(R) \ell_R + \sum_{f
  \ \mathrm{leaf \ edge \ of \ } \BB} \mu(\{f\}) f + z, 
} 
where $z$ is a 1-chain supported in $\gamma$ such that $\partial z =
-\partial x$. Invariance and additivity of $\mu$ imply that the
homology class in $H_1(S, \Sigma)$ is independent of the choice of
$\gamma$. 

The cochain $d_\mu$ is constructed as follows: in any product
neighborhood $U$ for $\FF$ in $S \sm \Sigma$, integration of $\mu$
gives a map $U \to \R$, constant along leaves (but discontinuous at
atomic leaves). On any oriented path in $U$ with endpoints off the
atoms, the value of the cochain is obtained by mapping endpoints to
$\R$ and subtracting. Via subdivision this extends to to a cochain
defined on 1-chains whose boundary misses atomic leaves. This cochain
is a cocycle via additivity and invariance of the measures, and
suffices to give a cohomology class (or one may extend it to all
1-chains by a suitable chain-homotopy perturbing vertices on atomic
leaves slightly). 

In the case with no atoms, we note that the expression for $c_\mu$ has
no terms of the form $\mu(\{f\}) f$, and hence we get a cocycle in $S
\sm \Sigma$. The definition of the cochain extends in that case to
neighborhoods of singular points, and evaluates consistently on
relative 1-chains, giving a class in $H^1(S, \Sigma)$. 

A cycle corresponding to a transverse measure will be called a {\em
  (relative) cycle carried by $\FF$}. The set of all (relative) cycles
carried by $\FF$ is a convex cone in $H_1(S, \Sigma; \R)$ which we
denote by $H^\FF_+$. Since we allow atomic measures, we can think of
(positively oriented) saddle connections or closed leaves in $\FF$ as
elements of $H^\FF_+$. Another way of constructing cycles carried by
$\FF$ is the Schwartzman asymptotic cycle construction
\cite{Schwartzman}. 
These are projectivized limits of long loops which are mostly in $\FF$
but may be 
closed by short segments transverse to $\FF$. 
It is easy to see that $H^{\FF}_+ \cap H_1(S)$ is the convex cone
over the asymptotic cycles, or equivalently the image of the
non-atomic transverse measures, and that $H^{\FF}_+$ is the convex
cone over asymptotic cycles and positive saddle connections in $\FF$. 
Generically (when $\FF$ is uniquely ergodic and contains no
saddle connections) $\HH_+^\FF$ is one-dimensional
and is spanned by the so-called {\em asymptotic cycle} of
$\FF$.

\ignore{

A {\em transverse cocycle} to a %an oriented 
singular foliation $\FF$ is
a function $\beta$ assigning a number $\beta(\gamma) \in \R$ to any
arc transverse to $\FF$, which is {\em invariant}
($\beta(\gamma)=\beta(\gamma')$ when $\gamma$ and $\gamma'$ 
are isotopic via an isotopy through transverse arcs which moves points
along leaves), and {\em strongly finitely additive} ($\beta(\gamma \cup
\gamma') = \beta(\gamma)+\beta(\gamma')$ if $\gamma, \gamma'$ have
disjoint interiors). Note that the strong finite additivity condition is
formulated so that it rules out `atoms'.
%A non-negative valued 
%transverse cocycle can be completed
%to a 
%transverse measure; thus a transverse measure is a special kind of
%transverse cocycle which only assumes non-negative values. 

With a transverse cocycle $\beta$ on an oriented singular foliation
$\FF$ one can associate a cohomology class in $H^1(S, \Sigma;
\R)$. Although this construction is well-known we will describe it in
detail. We will define a cover of $S$ by small open sets
$U$, and for each 
$U$ define a 1-cochain $\hat{\beta}_U$ supported on $U$ (i.e. a map giving
values to each 1-chain with image in $U$). Then for each path $\delta :
[0,1] \to S$ representing a 1-chain in $H_1(S, \Sigma)$, we will define
$\beta(\delta)$ as $\sum_1^n \hat{\beta}_{U_i}(\delta_i)$, where the
image of $\delta_i$ is contained in $U_i$ and $\delta$ is the
concatenation of the $\delta_i$. 

The $U$ and $\hat{\beta}_U$ are defined as follows. If $x \in
S\sm\Sigma$, we take a neighborhood $U$ of $x$ such that $\FF|_U$ has
a product structure and any two points of $U$ which are not in the
same plaque can be joined by an arc
in $U$ which is everywhere transverse to $\FF$. Then for $\delta: [0,1] \to U$, 
if $\delta(0)$ and $\delta(1)$ are in the same plaque we set
$\hat\beta_U(\delta)=0$, and otherwise, we let $\til \delta$ be an
everywhere transverse arc from $\delta(0)$ 
to $\delta(1)$, 
and put $\hat{\beta}_U(\delta) = \pm 
\beta(\til \delta)$, where the sign is positive if and only if $\til \delta$
crosses leaves from left to right. If $x \in \Sigma$, take a
neighborhood $U$ of $x$ so that any
point in $U$ can be joined to $x$ by a transverse arc or an arc in $U$
contained in a leaf, and so that there is a
branched cover $U \to \R^2$, branched at $x$, such that $\FF|_U$ is
the pullback of a product foliation on $\R^2$. Then for $\delta: [0,1] \to
U$, we let $\til \delta_1, \til \delta_2$ be 
arcs in $U$ from $\delta(0)$ to $x$ and from $x$ to $\delta(1)$ everywhere
transverse to $\FF$, or contained in $\FF$, and let $\hat{\beta}_U(\delta) = \pm \beta(\til
\delta_1) \pm \beta(\til \delta_2)$, with signs chosen as before. One
can check (using finite additivity) that this definition does not
depend on the choices and (using invariance) that  $\beta(\delta)$
only depends on the homology class of $\delta$.

Let $\gamma$ be a transverse system for $\FF$. Subdivide
$\gamma$ into finitely many sub-arcs $I$ on which $\IE(\FF, \gamma)$
is continuous, and for each $I$ choose one segment $\ell_I$ starting
at $I$, ending at $\gamma$, contained in a leaf, with the leaf's
orientation, and with no points of $\gamma$ in its
interior. Collapsing each connected component of $\gamma$ to a point
we get a 1-dimensional cell  
complex which is a deformation retract \combarak{NOT!} of $S \sm
\Sigma$. The 1-skeleton of this cell 
complex is sometimes called the {\em ribbon graph} or {\em train
track} associated to $S, \FF, \gamma$. It is a graph
embedded in $S$ and endowed with a cyclic ordering of the edges at
each vertex. 
The chain
\eq{eq: concrete cycle}{
\sum_I \beta(I) \ell_I}
can be shown (using strong finite additivity) to be independent of
the choice of the $I$'s and closed, and (using invariance) that the
corresponding homology class is independent of the transverse system
$\gamma$. Note in particular that if $\gamma$ is a judicious curve, we
only have to specify the value of 
$\beta$ on the maximal segments of continuity for $\IE(\FF, \gamma)$, and that
any choice of such values is possible.

Given a train track $\tau$ carrying $\FF$ and a transverse
cocycle $\beta$ on $\FF$ we can identify $\beta$ with real-valued
weights on the branches of $\tau$ which satisfy the switch condition. 
For instance, if we are given a judicious curve $\gamma$ for $\FF$ we
can use it to define a train track $\tau$, with one switch and $d$ branches,
which carries $\FF$: we simply collapse $\gamma$ to form the switch
and form one branch for each domain of continuity of $\IE(\FF,
\gamma)$, following $\FF$ along in the positive direction until its
next intersection with $\gamma$. Note that $\tau$ is filling since
$\gamma$ intersects every leaf of $\FF$, and is oriented since $\FF$
is. A judicious system will similarly give rise to a more general
train track. 

We can use this to associate with $\gamma$ and $\beta$ a homology
class $\xx = \xx(\FF, \beta, \gamma)$ in $H_1(S \sm \Sigma ;\R).$
This is just the formal sum of the 
oriented branches of $\tau$, weighted according to $\beta$; it is
closed because of the switch condition.

\combarak{Please check the following paragraph. Does it make sense?
And does it matter whether
$\gamma$ is connected, i.e. does it have to be `judicious'?}. 
By Poincar\'e duality, we can also think
of $\beta$ as an element of the relative cohomology group
$H^1(S, \Sigma; \R)$; indeed the intersection pairing $\iota:H_1(S \sm
\Sigma; \R) \times H_1(S,\Sigma; \R) \to \R$ is a nondegenerate bilinear form, so
can be used to identify $H_1(S \sm \Sigma; \R)$ with $H^1(S,
\Sigma; \R)$. Explicitly, if $\gamma$ is a judicious curve, $\beta$
is as in 
\equ{eq: concrete cycle}, and $\delta$ is an oriented path which is
either closed or joins points
of $\Sigma$, then
\eq{eq: explicit cocycle}{
\beta(\delta) = \sum_I \beta(I) \iota(\delta, \ell_I).
}

\combarak{NOT! $\ell_I$ need not be an element in $H_1(S \sm \Sigma)$
if not closed.}

If $\alpha$ is a cycle in $H_1(S, \Sigma; \R)$ and
$\beta$ is a transverse cocyle we write $\alpha \cdot \beta$ for the
value of $\beta$ on $\alpha$, and if $\FF$ is a measured foliation we
write $[\FF]$ for the cohomology class of $\FF$.
Let $\gamma$ be judicious for $\FF$. 
It is easily checked \combarak{Here we need the dual triangulation to
a train track -- but I think that can be omitted} that the cycles
$\ell_I \in H_1(S \sm \Sigma; \R)$ described above are linearly
independent.

It is instructive to compare transverse cocycles to transverse measures. 
Like a transverse cocycle, a transverse measure is also
invariant under holonomy along leaves and is finitely
additive. Moreover it is uniquely determined by its values on a finite
number of arcs which intersect every leaf, so one may think of a
transverse measure as a special kind of transverse cocycle taking only
non-negative values. Note however that the additivity
requirement for measures is slightly 
different, and this allows transverse measures to be atomic. Thus we
consider a Dirac measure on a saddle connection in $\FF$ to be a
transverse measure.

Now given an oriented singular foliation $\FF$, a {\em transverse
measure} is a function assigning a non-negative number $\mu(\gamma)$ to any
transverse arc $\gamma$, which is invariant and {\em finitely
additive} (i.e. $\mu(\gamma \cup \gamma') = \mu(\gamma)+\mu(\gamma')$
if $\gamma, \gamma'$ are 
disjoint). There is a standard procedure for completing $\mu$ to a
measure defined on the Borel subsets of any transverse arc. Note that
the finite additivity condition does not rules out `atoms'.

As we saw, transverse cocycles give rise to cohomology
classes. If a measured foliation $\FF$ is non-atomic then it satisfies
the strong finite additivity condition and hence gives rise to a cohomology
class, which we will denote by $[\FF]$. \combarak{Do we need to assume
non-atomic?} Moreover, given $\FF$,
considered only as a singular foliation, with any
transverse measure $\mu$ on $\FF$ we can also associate a homology class  
in $H_1(S, \Sigma; \R)$. Once again we will risk offending our readers
by describing this well-known construction in detail. 

Let $\gamma: J \to S$ be a transverse system to $\FF$, let $\IE =
\IE(\FF, \gamma)$, and let $\BB=
\BB(\gamma)$ be the corresponding cell complex as in \S
\ref{subsection: decompositions}. 
For each interval $J_i$ in $J$, we fix a point $x_i \in
\gamma(J_i)$. Now suppose a rectangle $R$ in $\BB$ has bottom (resp. top)
transverse boundary components in $J_1, J_2$. Let $\ell_R$ be the path
from $x_1$ along $\gamma(J_1)$ to a point $x$ on the bottom edge of
$R$, then along $\FF$ to $\IE(x)$, and then along $\gamma(J_2)$ to
$x_2$. By invariance, the atomic part of $\mu$ must be supported on
saddle connections, so for a saddle connection $f$ contained in $\FF$,
represent it by a concatenation $\ell_f$ of leaf edges in $\BB$. 
Set 
\eq{eq: explicit form cycle}{
\alpha=\alpha_\mu = \sum_{R \mathrm{\ a \ rectangle}} \mu(R) \ell_R
\ +  \sum_{f \ 
\mathrm{saddle \ connection}}
\mu(\{f\}) \ell_f,}
where $\mu(R)$ is the width of $R$ as measured by $\mu$. 
Note that the paths $\ell_R$ may not begin or end at points of
$\Sigma$. However one can check (using finite additivity) 
that $\alpha$ is actually an element of $H_1(S, \Sigma; \R)$, and
(using invariance) that it does not depend on the choice of $\gamma$.

A cycle corresponding to a transverse
measure will be called a {\em cycle carried by $\FF$.}
The set of all cycles carried by $\FF$ is a convex cone in $H_1(S,
\Sigma; \R)$ which we denote by $H^{\FF}_+$. Since we allow atomic
measures, we can think of (postively oriented) saddle connections or
closed leaves in
$\FF$ as elements of $H^{\FF}_+$. Another way of constructing cycles
carried by $\FF$ is the 
Schwartzman asymptotic cycle construction \cite{Schwartzman}. 
These are projectivized limits of long loops which are mostly in $\FF$
but may be 
closed by short segments transverse to $\FF$. 
It is easy to see that $H^{\FF}_+ \cap H_1(S)$ is the convex cone
over the asymptotic cycles, and that $H^{\FF}_+$ is the convex
cone over asymptotic cycles and positive saddle connections in $\FF$. 
Generically (when $\FF$ is uniquely ergodic and contains no
saddle connections) $\HH_+^\FF$ is one-dimensional
and is spanned by the so-called {\em asymptotic cycle} of
$\FF$. 
}
%Associated with an oriented singular foliation $\FF$ are the {\em cycles carried
%by $\FF$}. This is a convex cone $H^{\FF}_+ \subset H_1(S, \Sigma; \R)$ which can
%be described in two ways. Concretely it is
%the convex cone over the cycles coming from positive finite leaves
%(saddle connections) and those
%coming from transverse measures. A periodic leaf of $\FF$ or a saddle connection
%in $\FF$ obviously represent elements of
%$H_1(S, \Sigma; \R)$, where the positivity assumption means we orient
%the leaves in this 1-cycle using the orientation of $\FF$. Moreover,
%a transverse measure $\mu$ on $\FF$ acts on 
%a transverse cocycle $\beta = \sum_I \beta(I) \ell_I$ as in \equ{eq:
%concrete cycle} by $\mu \cdot \beta = \sum_I \mu(I) \beta (I),$ and since
%transverse cocycles span $H^1(S, \Sigma; \R)$ can be considered an
%element of $H_1(S,\Sigma; \R)$. 

%There is an alternative description of $H_+^{\FF}$ as
%the convex cone over limits of projectivized cycles represented by
%critical leaves or by 

%%The orientation on $\FF$ makes it possible to orient any segment
%transverse to $\FF$, e.g. by requiring that it cross $\FF$ from left
%to right. If $\beta$ happens to be a transverse measure, and $\alpha
%\in H_1(S,
%\Sigma; \R)$ is represented by positive linear
%combinations of oriented arcs in this orientation, then $\alpha \cdot
%\beta >0.$ 
%\eq{eq: positivity 1}{
% \alpha \in H_1(S,\Sigma; \R_+) \implies \alpha \cdot \beta >0.
%}
%A partial converse is also valid:

\ignore{
Given a transverse system $\gamma$ for $\FF$, we will be interested in
cellular decompositions of $S$ with vertices in either the image of
$\gamma$ or $\Sigma$, and whose edges are either in $\FF$ or in the
image of $\gamma$. We will call such a cellular decomposition {\em
adapted} to $\FF$ and $\gamma$. Its edges have a natural orientation:
those which are parts of leaves are oriented by the orientation of
$\FF$, and those which are in the image of $\gamma$ can be oriented by
requiring that they cross leaves of $\FF$ from left to right.

\begin{prop} Suppose $\gamma$ is a transverse system for $\FF$, and
we are given a cellular decomposition of $S$ which is adapted to $\FF$
and $\gamma$. Suppose $\beta$ is a transverse cocycle on $S$ such that
for each edge $\alpha$ in the
decomposition, $\alpha \cdot \beta>0$. Then there is a measured foliation 
foliation $\GG$ on $S$, everywhere transverse to $\FF$, with singularities in $\Sigma$, such that
$\beta = [\GG]$. 
\end{prop}

To
formulate it, suppose 
$\FF$ is a singular foliation on $S$ with singularities in
$\Sigma$. For $z \in \Sigma$ suppose the singularity at $z$ is
$k_z$-pronged (so $k_z \geq 4$ is an even integer). Let $\tau$ be a filling 
oriented train track carrying $\FF$. Thus the complementary regions
containing $z \in \Sigma$ is bounded by $k_z$
branches of $\tau$. An {\em augmentation} is a smooth oriented embedded graph
$\hat{\tau}$ containing $\tau$ and containing in addition 
oriented branches connecting each vertex of each complementary region
to a distinguished point inside the complementary region. Note that
these additional branches need not be leaves of $\FF$.  In
particular the vertices
of $\hat{\tau}$ are the union of $\Sigma$ and the switches of $\tau$. Note that
$\hat{\tau}$ defines a triangulation of $S$ --- each side of each 
branch $b$ of $\tau$ determines a triangle with one vertex in $\Sigma$ and
two vertices on the endpoints of $b$.

\begin{prop}
\name{prop: building a foliation}
Suppose $\FF, \Sigma, \tau, \hat{\tau}$ are as above. Suppose also that
there is a function $\beta$ which assigns to each 
edge $b$ of $\hat{\tau}$ a number $\beta(b)>0$ such
that for each triangle of the triangulation
$s_1, s_2, s_3$ so that 
\eq{eq: condition on sides}{
\beta(s_1)+\beta(s_2) = \beta(s_3),}
where the vertex between $s_1$ and $s_2$ is in $\Sigma$. 
Then there is an orientable singular foliation $\GG$ on $S$,
transverse to $\FF$, with singularities in
$\Sigma$ and such that the singularity at $z$ is $k_z$-pronged, on
which $\beta$ defines a transverse measure on $\GG$.  
 
\end{prop}

\begin{proof}
Let 
$\Delta$ be a triangle in the triangulation with sides $s_1, s_2,
s_3$, taken with the positive orientation, so that $s_i \cdot \beta
>0$ for each $i$. Since $\beta$ is a cocycle we have $\sum \vre_i s_i
\cdot \beta =0$, where 
$\vre_i = \pm 1$ according as the orientation of $s_i$ agrees with the
orientation induced by $\Delta$. So after a change of indices we have 
$s_1 \cdot  \beta + s_2 \cdot \beta = s_3 \cdot \beta$. Now one can define a
measured foliation $\GG=\GG_{\Delta}$ on $\Delta$, without
singularities, which is transverse to $\FF$ on the interior of
$\Delta$, 
such that all the sides of $\Delta$ are perpendicular to leaves of
$\GG$, and the
measure of side $s_i$ is $s_i \cdot \beta$. Gluing together all the
$\GG_{\Delta}$ gives a measured foliation $\GG$ on $S$ with $\beta =
[\GG]$. 
\end{proof}

We now suppose $\FF$ is equipped with a transverse cocycle, which we
denote by $\hat{\FF}$, and
explain how to use $\gamma$ to associate with it a class in the
relative cohomology group $H^1(S, \Sigma; \R)$. Let $c_i$ be the
system of curves above. Collapsing $\gamma$ to a point, we obtain a
bouquet which is homotopic to $S \sm \Sigma$. Let $\Delta$ be the
graph dual to these curves, so that each edge in $\Delta$ intersects
exactly one of the $c_i$'s once. We specify an orientation on the
edges in $\Delta$ by requiring that $\delta_i$ always crosses $c_i$
from left to right \combarak{Does it make a difference which way we
choose?}. The $\delta_i$ generate $H_(S,
\Sigma; \R)$, and if we assign to $\delta_i$ the number which the
transverse cocycles assigns $c_i$, one can check that we obtain a
well-defined element of $H^1(S, \Sigma; \R)$. We denote this element
by $\yy = \yy(\hat{\FF}, \gamma)$. Note that the $y_i$ will in general
have different signs. 

\combarak{Is this OK? Why is this well-defined?}

\subsection{Decomposition associated with a transverse system}
\name{subsection: decompositions}
Suppose $S$ is a surface with an oriented singular foliation $\FF$,
with singularities in $\Sigma$, and $\gamma$ is a transverse system to
$\FF$. There is an associated 
cellular decomposition $\BB(\gamma)$ of $S$ defined as follows. Write $\gamma_1,
\ldots, \gamma_k$ for the intervals of continuity of $\IE(\FF, \gamma)$; these
are by definition connected subarcs which are relatively open in
$\gamma$. Let $S_i$ be the union of pieces of leaves starting in the 
interior of $\gamma_i$ and going along $\FF$ in the positive direction until the next
meeting with $\gamma$. The interior of each $S_i$ does not contain any
point of $\Sigma$ since $\IE(\FF, \gamma_i)$ is continuous, hence is
homeomorphic to a disk. 
The 2-cells of $\BB(\gamma)$ are the closures of the $S_i$'s. The
1-cells are either pieces of leaves of $\FF$, or segments in $\gamma$
which are on the boundary of the $S_i$'s. We call the former {\em leaf
edges} and the latter {\em transverse edges}. We orient the leaf edges
according to the orientation of $\FF$, and the transverse edges so
they always cross the foliation from left to right. We include 0-cells so that
a 1-cell cannot 
contain a nontrivial segment in both $\FF$ and $\gamma$.

We will need to modify $\BB(\gamma)$ when $\FF$ has periodic
leaves. In this case, the periodic leaves form finitely many annuli,
each of which is a union of 2-cells of $\BB(\gamma)$. In each such
annulus we modify $\BB(\gamma)$ so that the annulus contains exactly
one cell which goes fully around the annulus, that is,
is bounded by two periodic leaf edges and one transverse edge bounding
the cell from both sides \combarak{here we need a figure}. We
call such a cell a {\em cylinder cell} 
and the transverse edge running inside it, a {\em
cylinder transverse edge}. Note that by construction, 
a periodic leaf is contained in the interior of a cylinder cell if and
only if it intersects $\gamma$ exactly once.

The 0-cells include all points of $\Sigma$, hence any cohomology class
$\beta \in 
H^1(S, \Sigma; \R)$ can be represented by a closed 1-cochain on
the 1-skeleton $\BB_1 \subset \BB(\gamma)$; i.e. a function
$\hat{\beta}$ defined on $\BB_1$ which 
vanishes on boundaries of 2-cells. Such a representative is
not canonical, since any two may differ by a 1-coboundary.

\begin{prop}
\name{prop: cell decomposition}
Let $S$ be a surface and let $\FF$ be a measured foliation on $S$ with
singularities in $\Sigma$. Let $\gamma_0$ be a transverse
system and let $\hat{\beta}$ be a 1-cocycle on $\BB(\gamma_0)$ 
which is positive on leaf edges and vanishes on transverse
edges, except possibly on cylinder transverse edges. 

Then there is a flat surface structure $q_0$ on $S$ for which $\FF$ is
the vertical measured foliation, and whose horizontal measured
foliation represents the same
element in $H^1(S, \Sigma; \R)$ as 
$\hat{\beta}$.

\end{prop}

\begin{proof}
Let $\hat{\alpha}$ be the 1-cocycle represented by the measured
foliation $\FF$, so that $\hat{\alpha}$ vanishes on leaf edges. To any edge
$e \in \BB(\gamma)$ we assign coordinates $(\hat{\alpha}(e),
\hat{\beta}(e))$. 
Let $R$ be a cell of $\BB(\gamma_0)$. It has 2
pairs of opposite sides: a pair of {\em horizontal sides} made of
transverse edges going in opposite
directions, and a pair of {\em vertical sides} made of
leaf edges going in opposite directions. The assumption on
$\hat{\beta}$ ensures that vertical sides are
assigned vertical vectors, and the orientation of these vectors
respects the orientation on $\BB(\gamma)$. If $R$ is not a cylinder
cell, the assumption on $\hat{\beta}$ also ensures that horizontal
sides are assigned horizontal vectors respecting the orientation. 
Since $\hat{\alpha},
\hat{\beta}$ are cocycles, opposite sides have the same length. That is
$R$ has 
the geometry of a Euclidean rectangle. Similarly, if $R$ is a cylinder
cell, it has the geometry of a Euclidean parallelogram with
vertical sides. 
By linearity of $\hat{\alpha}, \hat{\beta}$, the rectangles and
parallelograms 
can be glued to each other consistently. This produces an explicit
atlas of charts for $q_0$ with the advertized properties.
\end{proof}
}

\subsection{Intersection pairing} \name{subsection: intersection}

Via Poincar\'e duality, the canonical pairing on $H^1(S, \Sigma) \times H_1(S, \Sigma)$ becomes 
the intersection pairing on $H_1(S \sm \Sigma) \times H_1(S,
\Sigma)$. In the former case we denote this pairing by $(d, c) \mapsto
d(c)$, and in the latter, by $(c,c') \mapsto c \cdot c'$. Suppose
$\FF$ and $\GG$ are two mutually transverse oriented singular
foliations, with transverse measures $\mu$ and $\nu$ respectively. If
we allow $\mu$ but not $\nu$ to have atoms, then $[c_\mu] \in H_1(S,
\Sigma)$ and $[c_\nu] \in H_1(S \sm \Sigma)$ so we have the
intersection pairing 
\eq{eq: foliation pairing}{
c_\nu \cdot c_\mu = d_{\nu}(c_\mu) = \int_S \nu \times \mu.
}
In other words we integrate the transverse measure of $\GG$ along the
leaves of $\FF$, and then integrate against the transverse measure of
$\FF$. (The sign of $\nu \times \mu$ should be chosen so that it is
positive when the orientation of $\GG$ agrees with the transverse
orientation of $\FF$). We can see this explicitly by choosing a
transversal $\gamma$ for $\FF$ lying in the leaves of $\GG$ . Then the
cochain representing $d_\nu$ is 0 along $\gamma$, and using the form
\equ{eq: explicit form cycle} we have
\eq{eq: pair c d}{
d_\nu(c_\mu) = \sum_R \mu(R) \int_{\ell_R} \nu \, + \sum_f \mu(\{f\})
\int_f \nu.
}

\subsubsection{Judicious case} Now suppose $\gamma$ is a judicious
transversal. In this case the pairing of $H^1(S, \Sigma)$ and $H_1(S,
\Sigma)$ has a concrete form which we will use in \S \ref{section:
  pairs}. 

There is a cell decomposition  $\mathcal{D}$ of $S$ that is dual to
$\BB$, defined as follows. Because $\gamma$ intersects all leaves, and
terminates at $\Sigma$ on both ends, each rectangle $R$ of $\BB$ has
exactly one point of $\Sigma$ on each of its leaf edges. Connect these
two points by a transverse arc in $R$ and let $\mathcal{D}^1$ be the
union of these arcs. $\mathcal{D}^1$ cuts $S$ into a disk
$\mathcal{D}^2$, bisected by $\gamma$. Indeed, upward flow from
$\gamma$ encounters $\mathcal{D}^1$ in a sequence of edges which is
the {\em upper boundary} of the disk, and downward flow encounters the
{\em lower boundary} which goes through the edges of $\mathcal{D}^1$
in a permuted order, in fact exactly the permutation $\sigma$ of the
interval exchange $\IE(\FF, \gamma)$. 

A class in $H^1(S, \Sigma)$ is determined by its values on the
(oriented) edges of $\mathcal{D}^1$, and in fact this gives a basis,
which we can label by the intervals of continuity of $\IE(\FF,
\gamma)$ (the condition that the sum is 0 around the boundary of the
disk is satisfied automatically). The Poincar\'e dual basis for $H_1(S
\sm \Sigma)$ is given by the loops $\hat{\ell}_R$ obtained by joining
the endpoints of $\ell_R$ along $\gamma$. 

The pairing restricted to non-negative homology is computed by the
form $Q$ of \equ{eq: defn Q1}. Ordering the rectangles $R_1, \ldots,
R_d$ according to their bottom arcs along $\gamma$ and writing
$\hat{\ell}_i = \hat{\ell}_{R_i}$, we note that $\hat{\ell}_i$ and
$\hat{\ell}_j$ have nonzero intersection number precisely when the
order of $i$ and $j$ is reversed by $\sigma$
(i.e. $(i-j)(\sigma(i)-\sigma(j)) < 0$), and in particular (accounting
for sign), 
\eq{eq:Q intersection}{
\hat{\ell}_i \cdot \hat{\ell}_j = Q(\E_i, \E_j).
}
In other words, $Q$ is the intersection pairing on $H_1(S \sm \Sigma)
\times H_1(S \sm \Sigma)$ with the given basis (note that this form is
degenerate, as the map $H_1(S \sm \Sigma) \to H_1(S, \Sigma)$ has a
kernel).

\section{The lifting problem}
In this section we prove Theorem \ref{thm: sullivan1}. 
%We begin with
%the following lemma. In the lemma and its proof, a saddle connection
%which is a leaf in. $\FF$ is called a {\em vertical
%saddle connection}.

%\begin{lem}
%\name{lem: many crossings}
%Let $\gamma$ be a transverse system for $\FF$. 
%For any $N$ there is a transverse 
%subsystem $\gamma_N \subset \gamma$ such that:
%\begin{itemize}
%\item[(i)]
%Each vertical saddle connection intersects $\gamma_N$ exactly
%once.
%\item[(ii)]
%A connected component of $\gamma_N$ intersects as
%most one vertical saddle connection. 
%\item[(iii)]
%For any $x \in \gamma_N$ not on a periodic leaf or saddle connection,
%the ray issuing from $x$ along $\FF$ crosses $\gamma \sm \gamma_N$ at least $N$
%times before returning to $\gamma_N$ or reaching $\Sigma$. 
%\end{itemize}
%\end{lem}

\begin{proof}[Proof of Theorem \ref{thm: sullivan1}]
% assuming Lemma
%\ref{lem: many crossings}]
We first explain the easy direction (1) $\Longrightarrow$ (2). Since 
$\FF$ is everywhere transverse to $\GG$, and the surface is connected,
reversing the orientation of $\GG$ if necessary we can assume
that positively oriented paths in leaves of $\FF$ always cross $\GG$
from left to right. Therefore, using \equ{eq: foliation pairing} and
\equ{eq: pair c d}, we find that 
$\B(\delta) > 0$ for any $\delta \in \HH_+^\FF$. 

Before proving the converse we indicate the idea of proof. We will
consider a sequence of finer and finer cell decompositions associated
to a shrinking sequence of special transverse systems. As the
transversals shrink, the associated train tracks split, each one being
carried by the previous one. We examine the weight that a
representative of $\B$ places on the vertical leaves in the cells of
these decompositions (roughly speaking the branches of the associated
train tracks). If any of these remain non-positive for all time, then
a limiting argument produces an invariant measure on $\FF$ which has
non-positive pairing with $\B$, a contradiction. Hence eventually all
cells have positive `heights' with respect to $\B$, and can be
geometrically realized as rectangles. The proof is made complicated by
the need to keep track of the singularities, and in particular by the
appearance of cylinder cells in the decomposition.

Fix a 
special transverse system  $\gamma$ for $\FF$ (see \S \ref{subsection:
transversal}), and let $\BB = \BB(\gamma)$ be the corresponding cell
decomposition as in \S\ref{subsection: decompositions}. Given a path
$\alpha$ on $S$ which is contained in a 
leaf of $\FF$ and begins and ends in transverse edges of $\BB$, we
will say that $\alpha$ 
is {\em parallel to saddle connections} if there is a continuous
family of arcs $\alpha_s$ contained in leaves, where $\alpha_0 =
\alpha$, $\alpha_1$ is a union of saddle connections in $\FF$, and the
endpoints of each $\alpha_s$ are in $\gamma$. 

We claim 
that for any $N$ there is a special transverse system
$\gamma_N \subset \gamma$ such that the following holds: for any
leaf edge $e$ of $\BB_N = \BB(\gamma_N)$, either $e$ is parallel to
saddle connections or, when moving along $e$ from bottom to top, we
return to $\gamma \sm \gamma_N$ at least $N$ times before returning to
$\gamma_N$.  
Indeed, if the claim is false then for some $N$ and any special
$\gamma' \subset \gamma$ there is a leaf 
edge $e'$ in $\BB_N$ which is not parallel to saddle connections, 
and starts and ends at points $x$, $y$ in $\gamma'$, making at
most $N$ crossings with $\gamma \sm \gamma'$. 
Now take $\gamma'_j$ to be shorter
and shorter special transverse subsystems of $\gamma$, denote
the corresponding edge by $e_j$ and the points by $x_j, \,
y_j$. Passing to
a subsequence, we find that $x_j,y_j$ converge to points in $\Sigma$
and $e_j$ converges to a concatentation of at most $N$ saddle
connections joining these points. In particular, for large enough $j$,
$e_j$ is parallel to saddle connections, a contradiction proving the claim. 

Since $\gamma$ is special, each periodic component of $\FF$ consists
of one 2-cell of $\BB$, called a {\em cylinder cell}, with its top and
bottom boundaries identified along an edge traversing the component,
which we call a {\em cylinder transverse edge}. The same holds for
$\BB_N$, and our construction ensures that $\BB$ and $\BB_N$ contain
the same cylinder cells and the same cylinder transverse edges. 

Let $\beta$ be a singular (relative) 1-cocycle in $(S, \Sigma)$
representing $\B$. We claim that we may choose $\beta$ so that it
vanishes on non-cylinder transverse edges. Indeed, each such edge
meets $\Sigma$ only at one endpoint, so they can be
deformation-retracted to $\Sigma$, and pulling back a cocycle via this
retraction gives $\beta$. Note that $\beta$ assigns a well-defined
value to the periodic leaf edges, namely the value of $\B$ on the
corresponding loops. 

In general $\beta$ may assign non-positive heights to rectangles, and
thus it does not assign any reasonable geometry to $\BB$. We now claim
that there is a positive $N$ such that for any leaf edge $e \in
\BB_N$, 
$$\beta(e)>0.$$
Since a saddle connection in $\FF$ represents an element of
$H^{\FF}_+$, our assumption implies that 
$\beta(e)>0$ for any leaf edge of $\BB_N$ which is a saddle
connection. Moreover by construction, if $e$ is parallel to saddle
connections, then $\beta(e) = \sum \beta(e_i)$ for
saddle connections $e_i$, so again $\beta(e)>0$. Now suppose by contradiction
that for any $N$ we can find a leaf edge $e_N$ in $\BB_N$ which is not
parallel to saddle connections and such that
$C_N = \left|e_N \cap \gamma  \right| \geq N$
and 
$$\limsup_{N \to \infty} \frac{\beta(e_N)}{C_N} \leq 0.$$
Passing to a subsequence we define
a measure $\mu$
on $\gamma$ as a weak-* limit of the measures 
$$\nu_N(I) = \frac{|e_N \cap I|}{C_N}, \ \
\mathrm{where \ } I \subset \gamma \mathrm{ \ is \ an \ interval;}$$
it is invariant under the
return map to $\gamma$ and thus defines a transverse measure on $\FF$
representing a class $[c_\mu] \in H_+^\FF$. Moreover by construction it
has no atoms and gives measure zero to the cylinder cells. We will evaluate
$\beta(c_\mu)$.

For each rectangle $R$ in $\BB$, let $\theta_R \subset
\gamma$ be the transverse arc on the bottom of $R$ and $\ell_R$ a leaf
segment going through $R$ from bottom to top, as in \S
\ref{subsection: decompositions}. 
Since $e_N$ is not parallel to saddle connections, its intersection
with each $R$ is a union of arcs parallel to $\ell_R$. Since $\beta$
gives all such arcs the same values $\beta(\ell_R)$, we have
$$
\beta(e_N)=\beta\left( 
\sum_R |e_N \cap \theta_R| \ell_R
\right) = \sum_R |e_N \cap \theta_R| \beta (\ell_R).
$$
By \equ{eq: explicit form cycle}, since $\mu$ has no atoms we can
write 
$$
c_\mu = \sum_R \mu(\theta_R) \ell_R +z
$$
where $z \subset \gamma$. Since $\beta$ vanishes along $\gamma$ we
have
$$
\B(c_\mu) =  \sum_R \mu(\theta_R) \beta(\ell_R) = \sum_R \lim_N
\frac{|e_N \cap \theta_R}{C_N} \beta(\ell_R) = \lim_N
\frac{\beta(e_N)}{C_N} \leq 0.
$$
This contradicts the hypothesis, proving the claim. 

The claim implies that the
topological rectangles in $\BB_N$ can be given a compatible Euclidean
structure, using the transverse measure of $\FF$ and $\beta$ to measure respectively
the horizontal and vertical components of all relevant edges. Note
that all non-cylinder cells become metric rectangles, and the cylinder
cells become metric parallelograms. Thus we have constructed a
translation surface structure on $(S, \Sigma)$ whose horizontal foliation $\GG$
represents $\beta$, as required. 
\end{proof}

\ignore{
Now suppose there are $s$ saddle connections in $\FF$, and let
$\lambda$ be one of them. By assumption $\beta(\lambda)>0$. Moreover 
$\lambda$ crosses $\gamma$ a finite number of times, so there is $R$ 
(independent of $N$) so that for
each edge $e$ in $\BB_N$ which is 
part of a saddle connection, $|\hat{\beta}_N(e)| \leq R$. Take $N$
large enough so that for every edge $e$ in $\BB_N$  
which is not part of a saddle connection, 
$\hat{\beta}_N(e) > 2sR.$
For each $\lambda$ we will adjust $\hat{\beta}_N$ by adding a
coboundary, as follows. By Lemma \ref{lem: many crossings} there are
edges $\lambda_1, \lambda_2$ in $\BB_N$ such that $\lambda =
\lambda_1+\lambda_2$, and the $\lambda_j$ have one vertex in $\Sigma$ and one
in some interval $\gamma' \subset \gamma_N$. Moreover $\gamma'$ does
not intersect any other saddle connection. If both
$a = \hat{\beta}_N(\lambda_1), b=\hat{\beta}_N(\lambda_2)$ are positive we do not change
$\hat{\beta}_N$, and if say $a<0$ we define a function $h$
on the vertices of $\BB_N$ by $h(v)= (b-a)/2$ for all vertices
$v \in \gamma'$ and $h(v)=0$ 
for all other vertices. Adding the corresponding
1-coboundary $\delta h $ to $\hat{\beta}_N$ we obtain a cohomologous
cocycle whose value on each $\lambda_j$ is
$\hat{\beta}_n(\lambda)/2>0,$ which vanishes on transverse edges, and
whose value on all leaf edges has changed by at most $2R$. 
Continuing in this fashion $s$ times,
we replace $\hat{\beta}_N$ with a cohomologous cocycle
$\hat{\beta}_0$ which satisfies the 
conditions of Proposition 
\ref{prop: cell decomposition}. We may take
$\GG$ to be the horizontal measured foliation of $q_0$.

\end{proof}

\begin{proof}[Proof of Lemma \ref{lem: many crossings}]
First we remark that a connected component of $\gamma$ may be open, closed, or
neither. We only assume that it has non-empty interior. When we say a
subsystem is open we mean open as a subset of $\gamma$. 
Separate $\gamma$ into minimal and periodic components;
that is $\gamma = \gamma_m \cup \gamma_p$ where the interior of
$\gamma_p$ consists of periodic leaves, the interior of
$\gamma_m$ 
consists of leaves whose closure is minimal, and the boundaries of
$\gamma_p$ and $\gamma_m$ intersect $\FF$ in saddle connections.

We remove from $\gamma$ a finite set of 
points so that any $x \in \gamma$ whose forward trajectory hits a
singularity and is not on a saddle connection visits $\gamma$ at least
$N$ times before reaching the 
singularity, and so that each vertical saddle connection intersects $\gamma$
at most once. This ensures that (i) holds and that (iii) holds for all
points $x$ on critical leaves in $\gamma$, such that the ray from $x$
does not return to $\gamma_N$ before reaching $\Sigma$. To guarantee (ii), from every
connected component $\bar{\gamma}$ of $\gamma$ which intersects two or
more vertical saddle
connections, if an interval between saddle connections is minimal we
remove a subinterval from it to disconnect $\bar{\gamma}$. 
 Since the
interval is minimal, the remaining part of $\bar{\gamma}$ still
intersects every leaf. If the
interval is periodic we disconnect $\bar{\gamma}$ at some interior
point $x\in \gamma_p$ and move one of the connected components up or
down at $x$.

Now note that the maximal length of a piece
of leaf which starts and ends 
at $\gamma_m$ and has no points of $\gamma_m$ in its interior is
bounded. Thus there is $T$ such that any $x \in \gamma_m$, the piece
of leaf $\ell_T(x)$ of length
$T$ issuing from $x$ intersects $\gamma_m$ at least $N$ times. Since there are no
periodic leaves in $\gamma_m$, there is
$\vre>0$ such that for any $x \in \gamma_m$, 
$$\dist \left (x, \gamma_m \cap \ell_T(x) \sm x \right) > \vre.
$$
To satisfy (iii) take subintervals in $\gamma_m$ of length
$\vre$. 
\end{proof}

We say that $\gamma$ is a {\em finite system} on $S, \Sigma$ if it is a
finite union of simple arcs with endpoints in $\Sigma$ and
non-intersecting interiors. 
Modifying the above we obtain a stronger conclusion:

\begin{thm}
\name{thm: atlas}
Let $S, \Sigma, \FF, \beta$ be as in Theorem \ref{thm:
sullivan1}, let $\alpha \in H^\FF_+$, and let $\til \HH$ be the
corresponding stratum of marked flat surfaces. Then there is an open set
$\mathcal{U}$ in $H^1(S, \Sigma; 
\R^2) = \left(H^1(S, \Sigma; \R) \right) ^2$ containing $(\alpha,
\beta)$, a finite system $\gamma$ on $S,\Sigma$, and a map $\bq:
\mathcal{U} \to \til \HH$ such that the
following hold:
\begin{itemize}
\item
$\alpha$ and $\beta$ represent the vertical and horizontal foliations
of $\bq (\alpha, \beta)$ respectively. 

\item
For any $(\alpha, \beta) \in \mathcal{U}$, $\gamma$ is a transverse
system for the vertical foliation of $\bq(\alpha, \beta)$. 
\item
There is a 1-cocycle $\hat{\beta}$ on the 1-skeleton of $\BB(\gamma)$
which represents $\beta$ and satisfies (i) and (ii) of ...

\end{itemize}

\end{thm}

\begin{lem}\name{lem: new construction}
A `special' transverse system is one that has one s.c. across each
cylinder, and one short segment from each singularity along each
(transversal) prong which goes into a minimal component. The segment
is short enough so that it does not cross any saddle connection in
$\FF$. Given a special transverse system $\gamma$
,
take the decomposition $\mathcal{B}(\gamma)$ obtained by moving along $\FF$ from each
component of the transverse system back to itself. This has a
structure of a decomposition into topological rectangles, with leaf
edges containing all the saddle connections, and with each cylinder
filling up one rectangle.

\end{lem}

\begin{lem}\name{lem: special}
Given a special transverse system $\gamma$, and $N$, there is a
special transverse system $\gamma_N \subset \gamma$ such that each
rectangle of $\mathcal{B}(\gamma_N)$ which does not
have a saddle connection on its boundary, goes through $\gamma$ at
least $N$ times before returning to $\gamma_N$. 

\end{lem}

\begin{proof}
By contradiction get shorter and shorter segments for $\gamma_N$ at
each prong, and for each $N$, have a segment along foliation starting at some
component $I_j$ and returning to another $I_k$ before crossing
$\gamma$ $N$ times. Can assume $I_j, I_k$ constant. Taking limit get a
saddle connection joining singularities at endpoints of $I_j, I_k$,
contradiction. 

\end{proof}
}

\section{The homeomorphism theorem}\name{section: dev homeo}
We now prove Theorem \ref{thm: hol homeo}, which states that 
$$
\hol: \til \HH(\FF) \to \AAA(\FF) \times \BBB(\FF)
$$
is a homeomorphism, where $\til \HH(\FF)$ is the set of marked
translation surface structures with vertical foliation topologically
equivalent to $\FF$, $\AAA(\FF) \subset H^1(S, \Sigma)$ is the
set of Poincar\'e duals of asymptotic cycles of $\FF$, and $\BBB(\FF)
\subset H^1(S, \Sigma)$ is the set of $\B$ such that $\B(\alpha)>0$
for all $\alpha \in H^{\FF}_+$. 

\begin{proof}
The fact that $\hol$ maps  $\til \HH(\FF)$ to $\AAA(\FF) \times
\BBB(\FF)$ is an immediate consequence of the definitions, and of the
easy direction of Theorem \ref{thm: sullivan1}. That it is continuous
is also clear from definitions. That $\hol|_{\til \HH(\FF)}$ maps onto
$\AAA(\FF) \times \BBB(\FF)$ is the hard direction of Theorem
\ref{thm: sullivan1}. Injectivity is a consequence of the following:

\begin{lem}\name{lem: fiber singleton}
Let $\til \HH$ be a stratum of marked translation surfaces of type
$(S, \Sigma)$. Fix a singular measured foliation on $(S, \Sigma)$, and
let $\B \in H^1(S, \Sigma; \R)$. Then there is at most one $\bq \in
\til \HH$ with vertical foliation $\FF$ and horizontal foliation
representing $\B$.  
\end{lem}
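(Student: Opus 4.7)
The plan is to prove the lemma by explicitly constructing, given two candidate surfaces $\bq_1, \bq_2 \in \til \HH$ with the same vertical measured foliation $\FF$ and with horizontal measured foliations $\GG_1, \GG_2$ both representing $\B$, a self-homeomorphism $\phi$ of $(S, \Sigma)$ isotopic to the identity rel $\Sigma$ that pulls back the translation structure of $\bq_2$ to that of $\bq_1$. By the definition of the equivalence relation on marked translation surfaces, this will yield $\bq_1 = \bq_2$ in $\til\HH$.

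For the construction, each $\bq_j$ determines closed 1-forms $dx_j, dy_j$ on $S \sm \Sigma$ by pulling back the coordinate forms from $\R^2$. Since the vertical measured foliations agree, $dx_1 = dx_2$ after aligning orientations. Since $[dy_1] = [dy_2] = \B$ in $H^1(S, \Sigma; \R)$, the form $dy_2 - dy_1$ is exact on $S \sm \Sigma$. Using the local cone-chart picture at each singularity, $dy_1$ and $dy_2$ are each locally exact near $\Sigma$ (integration of $dy_j$ around a small loop encircling a $2(r_i+1)$-pronged singularity vanishes, since the cone angle is a multiple of $2\pi$), hence any primitive extends continuously across $\Sigma$; the vanishing of the relative class permits us to normalize to a continuous function $f\colon S \to \R$ with $df = dy_2 - dy_1$ on $S \sm \Sigma$ and $f|_\Sigma \equiv 0$. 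Define $\phi$ to fix $\Sigma$ pointwise and to send each $p \in S \sm \Sigma$ to the point obtained by flowing along the vertical leaf through $p$, in the direction of increasing $y_1$, by a signed $\bq_1$-distance $f(p)$. In local $\bq_1$-coordinates this is the shear $(x_1, y_1) \mapsto (x_1, y_1 + f)$.

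The main technical obstacle is verifying that this recipe genuinely defines a homeomorphism of $S$. Three points must be checked. First, the flow does not collide with a singularity: if a vertical segment from $p$ up to $\xi \in \Sigma$ has $\bq_1$-length $d$, then its $\bq_2$-length is $d + \int_p^\xi df = d - f(p)$, and positivity of this length forces $f(p) < d$, so $\phi(p)$ stops strictly before $\xi$. Second, along each vertical leaf the map $y_1 \mapsto y_1 + f$ is strictly monotone, because its $y_1$-derivative equals $\partial y_2 / \partial y_1 > 0$ (as $\bq_2$ is an oriented translation structure); this handles uniformly the three cases of infinite leaves in minimal components, closed leaves in cylinders (compatible because the $\bq_1$- and $\bq_2$-circumferences of any core curve agree by cohomology), and saddle segments (whose endpoints are fixed by $\phi$ since $f|_\Sigma = 0$). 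Third, continuity across $\Sigma$ follows from continuity of $f$ together with $f|_\Sigma = 0$.

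Once $\phi$ is seen to be a homeomorphism fixing $\Sigma$, the family $\phi_t$ defined by flowing the signed $\bq_1$-distance $tf(p)$ (each $\phi_t$ is a homeomorphism by the same reasoning applied to $tf$) provides an isotopy from $\Id$ to $\phi$ rel $\Sigma$. A direct calculation gives $\phi^* dx_1 = dx_2$ and $\phi^* dy_1 = dy_1 + df = dy_2$, so $\phi$ identifies the translation atlas of $\bq_1$ with that of $\bq_2$ up to translations in $\R^2$, showing $\bq_1 = \bq_2$ in $\til\HH$.
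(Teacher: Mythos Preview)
Your argument is correct and takes a genuinely different route from the paper's. The paper chooses a special transverse system $\gamma$, isotopes $\bq_2$ so that $\GG_1$ and $\GG_2$ agree near $\Sigma$ (hence along the non-cylinder edges of $\gamma$), and then observes that the cocycles $\beta_i = [\GG_i]$ on the cell complex $\BB(\gamma)$ must agree on every edge; since the Euclidean rectangle/parallelogram structure on each cell is determined by $\FF$ together with these edge-values, the two translation structures coincide. Your proof instead globalizes the comparison: you produce a primitive $f$ for $dy_2 - dy_1$ vanishing on $\Sigma$ and use the vertical flow by signed distance $f$ to write down an explicit isotopy carrying one structure to the other. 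This is more direct and avoids the combinatorics of $\BB(\gamma)$, whereas the paper's argument has the virtue of recycling the cell-decomposition machinery already built for Theorem~\ref{thm: sullivan1}.

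One small caveat: your monotonicity step invokes the derivative $\partial y_2/\partial y_1$, but the two translation atlases need not induce the same smooth structure on $S\sm\Sigma$, so this derivative may not exist pointwise. The conclusion survives without differentiability: locally $y_1 + f = y_2 + \text{const}$, and $y_2$ is strictly increasing along each $\FF$-leaf simply because the transverse measure of $\GG_2$ has full support. The same remark handles $\phi_t$ via the convex combination $(1-t)y_1 + t y_2$. With that adjustment your proof goes through cleanly.
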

\begin{proof}[Proof of Lemma \ref{lem: fiber singleton}]
Suppose that $\bq_1$ and $\bq_2$ are two marked translation surfaces,
such that the vertical measured foliation of both is $\FF$, and the
horizontal measured foliations $\GG_1, \GG_2$ both represent $\B$. We
need to show that $\bq_1=\bq_2$. Let $\gamma$ be a special transverse
system to $\FF$ (as in \S \ref{subsection: transversal} and the proof of Theorem \ref{thm:
  sullivan1}). Recall that the non-cylinder edges of $\gamma$ can be
made as small as we like. Since $\FF$ and $\GG_1$ are transverse, we
may take each non-cylinder segment of $\gamma$ to be contained in
leaves of $\GG_1$. 

In a sufficiently small neighborhood $U$ of any $p \in \Sigma$, we may
perform an isotopy of $\bq_2$, preserving the leaves of $\FF$, so as
to make $\GG_2$ coincide with $\GG_1$. This follows from the fact that
in $\R^2$, the leaves of any foliation transverse to the vertical
foliation can be expressed as graphs over the horizontal
direction. Having done this, we may choose the non-cylinder segments
of $\gamma$ to be contained in such neighborhoods, and hence
simultaneously in leaves of $\GG_1$ and $\GG_2$. 

\ignore{

it has one cylinder edge going across
each cylinder of periodic leaves for $\FF$, and its other segments
start at singularities and go into minimal components of $\FF$ without
crossing saddle connections in $\FF$. Make $\gamma$ smaller by
deleting segments, so it has only one segment going into each minimal
component. Since $\FF$ and $\GG_1$ are
transverse, we may take $\gamma$ to be contained in leaves of
$\GG_1$. 

We claim that, after making $\gamma$ smaller if necessary, we may
precompose $\bq_2$ with an isotopy rel 
$\Sigma$, in order to assume that $\gamma$ is also contained in leaves
of $\GG_2$, i.e. that $\GG_1$ and $\GG_2$ coincide along
$\gamma$. Indeed, if $\alpha$ is a sufficiently short arc in $\gamma$
going from a singularity $x$ into a minimal set $M$, we may isotope
along $\FF$ in $M$ 
until the leaf of $\GG_2$ starting at $x$ reaches $\alpha$. Since $M$
contains no other segments of $\gamma$ we may perform this operation
independently on each minimal component. Now suppose $\alpha$ is a
cylinder edge in $\gamma$, so that $\alpha$ connects two
singularities on opposite sides of a cylinder $P$. In particular
$\int_\alpha \GG_1 = \B(\alpha) = \int_\alpha \GG_2$, so we may
perform an isotopy inside $P$ until $\alpha$ is also contained in a
leaf of $\GG_2$, as claimed. 
}

Now consider the cell decomposition $\BB=\BB(\gamma)$, and let $\beta_i =
[\GG_i]$ be the 1-cocycle on $\BB$ obtained by integrating
$\GG_i$. For a transverse non-cylinder edge $e$ we have $\beta_1(e)=\beta_2(e)=0$
since $e$ is a leaf for both foliations. If $e$ is contained in a leaf
of $\FF$, we may join its endpoints to $\Sigma$ by paths $d$ and $f$
along $\gamma$. Then $\delta = d+ e+ f$ represents an element of 
$H_1(S, \Sigma)$ so that $\beta_1(\delta) =
\beta_2(\delta)=\BB(\delta)$. Since $\beta_i(d)= \beta_i(f)=0$ we have
$\beta_i(e) = \BB(\delta)$. For a cylinder edge $e$, its endpoints are
already on $\Sigma$ so $\beta_1(e)=\beta_2(e)$. We have shown that
$\beta_1 = \beta_2$ on all edges of $\BB$. 

Recall from the proof of Theorem \ref{thm: sullivan1} that $\bq_i$ may
be obtained explicitly by giving each cell the structure of a
Euclidean rectangle or parallelogram (the latter for cylinder cells)
as determined by $\FF$ and $\beta_i$ on the edges. Therefore $\bq_1 =
\bq_2$. 
\end{proof}

Finally we need to show that the inverse of hol is continuous. This is
an elaboration of the well-known fact that hol is a local
homeomorphism, which we can see as follows. Let $\bq \in \til \HH$,
and consider a geometric triangulation $\tau$ of $\bq$ with vertices
in $\Sigma$ (e.g. a Delaunay triangulation \cite{MS}). The shape of
each triangle is uniquely and continuously determined by the hol image
of each of its edges. Hence if we choose a neighborhood $\mathcal{U}$ of $\bq$
small enough so that none of the triangles becomes degenerate, we have
a homeomorphism $\hol: \mathcal{U} \to \mathcal{V}$ where $\mathcal{V}
= \hol(\mathcal{U}) \subset H^1(S, 
\Sigma; \R^2)$. 

If for $\bq' \in \mathcal{U}$, the first coordinate $x (\bq')$ of $\hol(\bq')$
lies in $\AAA(\FF)$, then we claim that $\bq' \in \til \HH(\FF)$. This
is because the vertical foliation is determined by the weights that
$x(\bq)$ assigns to edges of the triangulation. By Lemma \ref{lem:
  fiber singleton},
$\bq'$ is the unique preimage of $\hol(\bq')$ in $\til
\HH(\FF)$. Hence $\left(\hol|_{\mathcal{U}}\right)^{-1}$ and $\left(\hol|_{\til \HH(\FF)}\right)^{-1}$
coincide on their overlap, so continuity of one implies continuity of
the other.
\end{proof}

\section{Positive pairs}\name{section: pairs}
We now reformulate Theorem \ref{thm: sullivan1} in the language of
interval exchanges, and derive several useful consequences. For this
we need some more definitions. 
Let the notation be as in \S\ref{subsection: iets}, so that $\sigma$
is an irreducible and 
admissible permutation on $d$ elements. 
The tangent space $T\R^d_+$ has a natural product structure $T\R^d_+=\R^d_+
\times \R^d$ and a corresponding affine structure. 
Given $\A \in \R^d_+, \, \B \in \R^d$, we can think of $(\A, \B)$ as
an element of $T\R^d_+$. We will be using the same symbols $\A,
\B$ which were previously used to denote cohomology classes; the
reason for this will become clear momentarily. Let $\IE =
\IE_{\sigma}(\A)$ be the interval exchange associated with $\sigma$
and $\A$. 

For $\B \in
\R^d$, in analogy with \equ{eq: disc points2}, 
define
$y_i(\B), \, y'_i(\B)$ via \eq{eq: disc points1}
{
y_i=y_i(\B)= \sum_{j=1}^i b_j, \ \ \
y'_i =y'_i(\B) = \sum_{j=1}^i b_{\sigma^{-1}(j)} =
\sum_{\sigma(k)\leq i} b_k.
}
In the case of Masur's construction (Figure \ref{figure: Masur}), the $y_i$ are the
heights of the points in the upper boundary of the polygon, and the
$y'_j$ in the lower. 

Consider the following step functions $f,g, L: I \to \R$, depending on
$\A$ and $\B$:
\eq{eq: defn L}{
\begin{split}
f(x) & = y_i \ \ \ \ \ \ \ \ \mathrm{for}\ x \in I_i = [x_{i-1}, x_i) \\
g(x) & = y'_i \ \ \ \ \ \ \ \ \mathrm{for} \ x \in I'_i = [x'_{i-1}, x'_i) \\
L(x) & = f(x) - g(\IE(x)).
\end{split}
}
Note that for $Q$ as in \equ{eq: defn Q1} and $x \in I_i$ we have 
\eq{eq: relation L Q}{
L(x) = Q(\E_i, \B) .
} 

If there are $i,j \in \{0, \ldots , d-1\}$
 (not necessarily distinct)
and $m>0$ such that $\IE^m(x_i)=x_j$ we will say that $(i,j,m)$ is a {\em 
connection} for $\IE$. 
We denote the set of invariant non-atomic
probability measures for $\IE$ by $\MM_\A$, and 
the set of connections by $\LL_\A$.

\begin{Def}
We say that $(\A, \B) \in \R^d_+ \times \R^d$ is {\em a positive pair} if 
\eq{eq: positivity}{\int L \, d\mu >0
 \
\ \mathrm{for \ any \ } \mu \in \MM_{\A}
}
and 
\eq{eq: connections positive}{\sum_{n=0}^{m-1}L(\IE^nx_i) > y_i-y_j \ \
\ \mathrm{for \ any \ } (i,j,m) \in \LL_{\A}.
}
\end{Def}
As explained in \S \ref{subsection: transversal}, following \cite{ZK}
one can construct a surface $S$ with a finite subset $\Sigma$,
a foliation $\FF$ on $(S, \Sigma)$, and a judicious curve $\gamma$ on
$S$ such that $\IE_\sigma(\A) = \IE(\FF, \gamma)$ (we identify
$\gamma$ with $I$ via the transverse measure). 

Moreover, as in \S \ref{subsection: intersection} there is a complex
$\mathcal{D}$ with a single 2-cell $\mathcal{D}^2$ containing $\gamma$
as a properly embedded arc, and whose boundary is divided by $\partial
\, \gamma$ into two arcs each of surjects to the 1-skeleton of
$\mathcal{D}$. The upper arc is divided by $\Sigma$ into $d$ oriented
segments $K_1, \ldots, K_d$ images of the segments $I_i$ under flow
along $\FF$. The vector $\B$ can be interpreted as a class in $H^1(S,
\Sigma)$ by assigning $b_i$ to the segment $K_i$. The Poincar\'e dual
of $\B$ in $H_1(S \sm \Sigma)$ is written $\beta = \sum b_i \hat{\ell}_i$,
where $\hat{\ell}_i$ are as in \S \ref{subsection: intersection}.  

Using these we show:
\begin{prop}
\name{prop: positive interpretation}
$(\A, \B)$ is  a positive pair if and only if $\B (\alpha) >0 $
for any $\alpha \in H^{\FF}_+$. 
\end{prop}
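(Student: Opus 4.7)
The plan is to identify the two conditions in the definition of ``positive pair'' with positivity of $\B$ on the two types of generators of $H_+^\FF$: non-atomic transverse measures on $\FF$ and saddle connections of $\FF$. As set up in \S\ref{subsection: transversal}, the judicious curve $\gamma$ realizes $\IE_\sigma(\A)$ as the return map $\IE(\FF,\gamma)$, so non-atomic $\IE$-invariant probability measures $\mu \in \MM_\A$ correspond bijectively to non-atomic transverse measures on $\FF$, while connections $(i,j,m) \in \LL_\A$ correspond to saddle connections $\lambda$ of $\FF$ from $P_i$ to $P_j$. Under this dictionary, each $\mu$ produces a non-atomic $c_\mu \in H^\FF_+ \cap H_1(S\sm\Sigma)$ and each saddle connection an atomic element of $H_+^\FF$, and jointly these generate $H_+^\FF$ as a convex cone.

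For the first (non-atomic) side, I would compute $\B(c_\mu)$ via the intersection pairing of \S\ref{subsection: intersection}. Writing $c_\mu = \sum_i \mu(I_i) \hat\ell_i + z$ from \equ{eq: explicit form cycle} (with $z$ supported in $\gamma$, and noting that for a non-atomic $\mu$ there are no $\mu(\{f\})f$ terms and $c_\mu$ naturally lifts to $H_1(S\sm\Sigma)$), and writing the Poincar\'e dual of $\B$ in the dual basis as $\beta = \sum_j b_j \hat\ell_j$, the formula \equ{eq:Q intersection} gives
\[
\B(c_\mu) \;=\; \beta \cdot c_\mu \;=\; \sum_i \mu(I_i)\, Q(\E_i, \B).
\]
Combining with \equ{eq: relation L Q}, which says $L(x) = Q(\E_i,\B)$ on $I_i$, yields $\B(c_\mu) = \int_I L\, d\mu$. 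Hence \equ{eq: positivity} is the exact translation of ``$\B$ positive on all non-atomic classes in $H_+^\FF$''.

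For the second (atomic) side, I would analyze a saddle connection $\lambda$ for a connection $(i,j,m)$. In the cell decomposition $\BB(\gamma)$, $\lambda$ is the concatenation of $m$ vertical leaf segments, the $n$-th traversing the rectangle $R_{i_n}$ where $\IE^n x_i \in I_{i_n}$, together with small transverse arcs along $\gamma$ at its endpoints that join $\lambda$ to $P_i$ and $P_j$. Choosing a cocycle representative of $\B$ that vanishes on the non-cylinder transverse edges (exactly as in the proof of Theorem \ref{thm: sullivan1}), each $n$-th leaf edge is assigned the value $L(\IE^n x_i)$, while the closing-off arcs at the endpoints contribute $-(y_i-y_j)$ --- this is because the singularities $P_i,P_j$ at heights $y_i,y_j$ (in Masur's coordinates) sit at specific points in the top boundary of the disk $\mathcal{D}^2$ of \S\ref{subsection: intersection}, and the identification of the dual basis with intervals is so that $b_i = y_i - y_{i-1}$. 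Adding these contributions gives
\[
\B(\lambda) \;=\; \sum_{n=0}^{m-1} L(\IE^n x_i) \;-\; (y_i - y_j),
\]
so \equ{eq: connections positive} is the exact translation of ``$\B$ positive on every (atomic) saddle-connection class in $H_+^\FF$''.

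Finally, since every nonzero element of $H_+^\FF$ is a positive combination of a non-atomic class $c_\mu$ and a finite sum of positive saddle-connection classes, the conjunction of \equ{eq: positivity} and \equ{eq: connections positive} is equivalent to $\B(\alpha)>0$ for all nonzero $\alpha \in H_+^\FF$, which is the claimed equivalence. The only real obstacle is the sign/base-point bookkeeping in the saddle-connection computation: one must choose the cocycle representative of $\B$ carefully and keep track of how the endpoints in $\Sigma$ relate to the basis of $H^1(S,\Sigma)$ associated with the intervals $I_i$. Once this is pinned down --- and Masur's construction together with the description of the dual complex $\mathcal{D}$ in \S\ref{subsection: intersection} makes it essentially mechanical --- the two identifications $\int L\, d\mu = \B(c_\mu)$ and $\sum L(\IE^n x_i) - (y_i-y_j) = \B(\lambda)$ close the proof.
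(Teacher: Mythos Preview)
Your overall strategy matches the paper's: reduce to the two generating families of $H_+^\FF$ (non-atomic transverse measures and saddle connections) and identify $\B$ on each with the two conditions \equ{eq: positivity} and \equ{eq: connections positive}. The non-atomic computation is essentially identical to the paper's, using \equ{eq:Q intersection} and \equ{eq: relation L Q} to get $\B(c_\mu)=\int L\,d\mu$.

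The saddle-connection step, however, has an internal inconsistency. You invoke a cocycle representative of $\B$ that \emph{vanishes on non-cylinder transverse edges}, but then say the ``closing-off arcs along $\gamma$'' contribute $-(y_i-y_j)$. Those closing arcs are transverse edges, so they would contribute $0$ under your chosen representative; and in any case for a judicious $\gamma$ (a single arc with both endpoints in $\Sigma$) you cannot make the cocycle vanish on all its subarcs, since $\B(\gamma)$ is fixed. The trick from the proof of Theorem~\ref{thm: sullivan1} applied to \emph{special} transverse systems, where each non-cylinder edge retracts to $\Sigma$; it does not transfer directly to the judicious setting. So as written you have not actually accounted for where the boundary term $y_j-y_i$ comes from.

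The paper avoids this by working in the dual disk $\mathcal{D}^2$ of \S\ref{subsection: intersection}: the saddle connection decomposes into $m$ leaf segments $\eta_1,\ldots,\eta_m$ inside $\mathcal{D}^2$, each $\eta_n$ meeting $\gamma$ at $\IE^n(x_i)\in I_r\cap I'_s$; one replaces $\eta_n$ by the homologous straight segment $\hat\eta_n$ joining the left endpoint of $I'_s$ to that of $I_r$, so that $\B(\hat\eta_n)=y_r-y'_s=f(\IE^n x_i)-g(\IE^n x_i)$ directly from the basis description. Summing and telescoping via $L(x)=f(x)-g(\IE x)$ then produces $\B(\alpha)=-y_i+y_j+\sum_{n=0}^{m-1}L(\IE^n x_i)$, with the boundary term appearing naturally from the shift in the telescoping rather than from any transverse-edge contribution. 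Your instinct that the bookkeeping is ``essentially mechanical'' is right, but the mechanism you proposed is not the one that works.
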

\begin{proof}
We show that implication $\implies$, the converse being similar. 
It suffices to consider the cases where 
$\alpha \in H^\FF_+$ corresponds to $\mu \in \MM_\IE$ or to a
positively oriented saddle connection in $\FF$, because a general
element in $H^\FF_+$ is a convex combination of these. 

In the first case, define $a'_k = \mu(I_k)$ and $\A' = \sum a'_k
\E_k$. The corresponding homology class in $H_1(S \sm \Sigma)$ is
$\alpha = \sum a'_k \hat{\ell}_k$. Hence, as we saw in \equ{eq:Q
  intersection},
$$
\B(\alpha) = \beta \cdot \alpha = Q(\A', \B)  = \sum a'_k Q(\E_k, \B)
\stackrel{\equ{eq: relation L Q}}{=} \sum \mu(I_k) L|_{I_k} = \int L
\, d\mu >0.  
$$ 
\ignore{
In the first case, since Poincar\'e 
duality is given by the intersection pairing, which is recorded by
$Q$, we have 
\[
%%\begin{split}
\beta \cdot \alpha_\mu
% \stackrel{
%\equ{eq: relation
%L Q}}{=} 
%%& 
= Q\left(\alpha_\mu, \B\right) \stackrel{\equ{eq: explicit form
    cycle}}{=} Q\left(\sum_k \mu(I_k) \ell_k\right) %%\\ 
%%& 
= \sum \mu(I_k) Q(\E_k, \B) \stackrel{\equ{eq: relation L Q}}{=} 
%\sum \mu(I_k) L|_{I_k} = 
\int L \, d\mu >0.
%%\end{split}
\]

In the second case, given a connection $(i,j,m)$ for $\IE$ we form an
explicit path in $\BB(\gamma)$ representing it. The path has the form 
$e + \sum_{n=1}^{m-1} \ell_{\IE^n(x_{i})} + f$, where $\ell_z = \ell_k$
if $z \in I_k$, $e$ goes from the singularity $\xi_{i}$ `above' $x_{i}$ to
$\gamma$, and $f$ goes from $\gamma$ to $\xi_{j}$. Also let $e_1$ be the
path from $\xi_{i}$ to $\gamma$ on the side of the rectangle above
$I_{i}$; then $e+e_1 = \ell_{i}$. 

We denote by $\iota: H_1(S, \Sigma; \R) \times H_1(S \sm \Sigma; \R)
\to \R $ the intersection pairing. 
By \equ{eq: relation L Q} we have $\iota(\ell_k, \B)
= L|_{I_k}$. Perturbing the paths
representing $\ell_k, f$ and $e_1$, we see that $e_1$ only crosses the
paths $\ell_k$ for $k>i$, in the negative direction. Therefore 
$$\iota(e_1, \B) = -\sum_{k> i} b_k =
B+ y_{i}, \ \  \mathrm{where} \ B = -\sum b_k.$$
Similarly $\iota(f, \B) = B+
y_j$. Therefore%, by ,
\[
\begin{split}
\beta \cdot \alpha  
& = 
 \iota \left(e + \sum_{n=1}^{m-1}
\ell_{\IE^n(x_{i})} + f, \B\right) = \iota \left( -e_1 + \sum_{n=0}^{m-1}
\ell_{\IE^n(x_{i})} + f, \B\right) \\ 
\\ & = -y_{i}+ \sum_{n=0}^{m-1} L(\IE^nx_i)+y_{j} \stackrel{\equ{eq: connections positive}}{>}0.
\end{split}
\]
}

Now consider the second case. Given a connection $(i,j,m)$ for $\IE$,
the corresponding saddle connection $\alpha$ meets the disk
$\mathcal{D}^2$ in a union of leaf segments $\eta_1, \ldots, \eta_m$
where each $\eta_n$ is the leaf segment in $\mathcal{D}^2$
intersecting the interval $\gamma = I$ in the point $\IE^n(x_i)$. This
point lies in some interval $I_r$ and some interval $I'_s$. If we let
$\hat{\eta}_n$ be a line segment connecting the left endpoint of
$I'_s$ to the left endpoint of $I_r$ then the chain $\sum
\hat{\eta}_n$ is homologous to $\sum \eta_n$ (note that the first
endpoint of $\eta_1$ and the last endpoint of $\eta_m$ do not
change). Now we apply our cocycle $\B$ to each $\hat{\eta}_n$ to
obtain
$$
\B(\hat{\eta}_n) = y_r - y'_s = f(\IE^n(x_i)) - g(\IE^n(x_i)).
$$
Summing, we get
\[
\begin{split}
\B(\alpha) & = \B \left (\sum \eta_n \right)  = \B \left (\sum
\hat{\eta}_n \right)\\ & = \sum f(\IE^n(x_i)) - 
g(\IE^n(x_i)) \\
& = -f(x_i) + f(\IE^m(x_i)) + \sum_{n=0}^{m-1} f(\IE^n(x_i)) -
g(\IE^{n+1}(x_i)) \\ 
& = -y_i+y_j + \sum_{n=0}^{m-1} L(\IE^n(x_i)) >0.
\end{split}
\]
\end{proof}

Now we can state the interval exchange version of Theorem \ref{thm: sullivan1}:
\begin{thm}
\name{thm: sullivan2}
Let $\til \HH$ be the stratum of marked translation surfaces corresponding to
$\sigma$. Then for any positive pair $(\A_0, \B_0)$ there is a
neighborhood $\mathcal{U}$ of $(\A_0, \B_0)$ in $T\R^d_+$, and a map
$\bq: \mathcal{U} \to \til \HH$ such that the following hold:
\begin{itemize}
\item[(i)]
$\bq$ is an affine map and a local homeomorphism. 
\item[(ii)]
For any $(\A, \B) \in \mathcal{U}$, $\bq(\A,\B)$ is a lift of $\A$. 
\item[(iii)]
Any $(\A, \B)$ in $\mathcal{U}$ is positive.
\item[(iv)]
Suppose $(\A, \B) \in \mathcal{U}$ and $\vre_0>0$ is small enough so
that $(\A+s\B, \B) \in \mathcal{U}$ for $|s| \leq \vre_0$. Then $h_s \bq(\A, \B)
= \bq(\A+s\B, \B)$ for $|s | \leq \vre_0$.

\end{itemize}

\end{thm}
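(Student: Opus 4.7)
The plan is to derive the theorem directly from Theorem \ref{thm: hol homeo} combined with Proposition \ref{prop: positive interpretation}. Starting from $\A_0$ and the lifting construction of \S\ref{subsection: transversal}, I fix a model $(S, \Sigma, \FF_0, \gamma)$ with $\gamma$ a judicious curve and $\IE(\FF_0, \gamma) = \IE_\sigma(\A_0)$. The dual complex $\mathcal{D}^1$ from \S\ref{subsection: intersection} identifies $\R^d$ with $H^1(S, \Sigma; \R)$ in such a way that $\A_0$ corresponds to the class $[\FF_0]$ of the vertical foliation; under this identification a general pair $(\A, \B) \in T\R^d_+$ records horizontal and vertical holonomy data on the edges of $\mathcal{D}^1$.

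By Proposition \ref{prop: positive interpretation}, the positivity of $(\A_0, \B_0)$ is equivalent to $\B_0 \in \BBB(\FF_0)$, and by construction $\A_0 \in \AAA(\FF_0)$. Theorem \ref{thm: hol homeo} then yields a unique $\bq_0 \in \til \HH(\FF_0)$ with $\hol(\bq_0) = (\A_0, \B_0)$, and identifies $\hol$ as a homeomorphism $\til \HH(\FF_0) \to \AAA(\FF_0) \times \BBB(\FF_0)$. Both $\AAA(\FF_0)$ and $\BBB(\FF_0)$ are open in $H^1(S, \Sigma; \R)$, so I may choose an open neighborhood $\mathcal{U} \subset \AAA(\FF_0) \times \BBB(\FF_0)$ of $(\A_0, \B_0)$ and define $\bq := \left(\hol|_{\til \HH(\FF_0)}\right)^{-1}$ on $\mathcal{U}$.

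Properties (i)--(iv) are then verified as follows. (i) Since $\hol$ is linear in local charts for $\til \HH$, its inverse on $\mathcal{U}$ is affine; the local homeomorphism property is built into Theorem \ref{thm: hol homeo}. (ii) For $\bq(\A, \B) \in \til \HH(\FF_0)$ the curve $\gamma$ remains judicious with the same combinatorial type $\sigma$, and the interval widths of $\IE(\bq(\A, \B), \gamma)$ are obtained by evaluating $\A = [\FF]$ on the edges of $\mathcal{D}^1$, yielding $\IE(\bq(\A, \B), \gamma) = \IE_\sigma(\A)$. (iii) The vertical foliation of $\bq(\A, \B)$ is topologically equivalent to $\FF_0$, so $H^{\FF}_+ = H^{\FF_0}_+$; thus $\B \in \BBB(\FF_0)$ translates, via Proposition \ref{prop: positive interpretation}, into positivity of $(\A, \B)$. (iv) By \equ{eq: G action}, $\hol(h_s \bq(\A, \B)) = h_s \cdot \hol(\bq(\A, \B)) = (\A + s\B, \B)$; by injectivity of $\hol$ on $\til \HH(\FF_0)$, this forces $h_s \bq(\A, \B) = \bq(\A + s\B, \B)$ whenever both sides lie in $\mathcal{U}$, which holds for $|s| \leq \vre_0$ after shrinking $\mathcal{U}$ if necessary.

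The real content of the argument is Proposition \ref{prop: positive interpretation}, which bridges the combinatorial definition of a positive pair with the cohomological condition appearing in Theorems \ref{thm: sullivan1} and \ref{thm: hol homeo}. Once that bridge is in place, the rest of the proof is an unraveling of definitions, and the only remaining subtlety is making $\mathcal{U}$ small enough that the stated $\vre_0$ neighborhoods along each horocycle $(\A + s\B, \B)$ remain inside $\mathcal{U}$.
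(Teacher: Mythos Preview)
Your argument has a genuine gap: the set $\AAA(\FF_0)$ is \emph{not} open in $H^1(S,\Sigma;\R)$. By definition $\AAA(\FF_0)$ consists of cohomology classes of non-atomic transverse measures on the \emph{fixed} topological foliation $\FF_0$; equivalently, of invariant measures for the interval exchange $\IE_\sigma(\A_0)$. This is a convex cone whose dimension is bounded by the number of ergodic invariant measures (plus cylinder parameters), and for generic $\A_0$ it is a single ray. Hence $\AAA(\FF_0)\times\BBB(\FF_0)$ cannot contain an open neighborhood of $(\A_0,\B_0)$ in the $2d$-dimensional space $T\R^d_+$, and your map $\bq=\hol^{-1}$ is only defined on a set of positive codimension.

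The underlying conceptual point is that perturbing $\A$ typically changes the vertical foliation \emph{topologically}, not merely its transverse measure: nearby $\A$'s give interval exchanges with different orbit structures, hence foliations $\FF(\A)$ that are not topologically equivalent to $\FF_0$. Theorem~\ref{thm: hol homeo} only parametrizes $\til\HH(\FF_0)$, so it cannot by itself produce the full neighborhood $\mathcal{U}$. This is exactly what the paper's proof addresses: it fixes a geometric triangulation $\tau$ of $\bq_0$, observes that for $(\A,\B)$ close to $(\A_0,\B_0)$ the same $\tau$ is still realized geometrically, and then argues that the vertical foliation of the resulting surface is a small perturbation $\FF(\A)$ of $\FF_0$ near $\gamma$ whose return map to $\gamma$ is $\IE_\sigma(\A)$. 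The triangulation provides the open chart in $\til\HH$ that your use of $\til\HH(\FF_0)$ cannot. Your verifications of (i), (iii), (iv) would go through once $\bq$ is correctly defined, but (ii) also needs the explicit control of $\FF(\A)$ relative to $\gamma$ that the triangulation argument supplies.
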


\begin{proof}
Above and in \S \ref{subsection: intersection} we identified $\R^d$
with $H^1(S, \Sigma; \R)$, obtaining an injective affine map  
$$ T\R^d_+ \cong \R^d_+ \times \R^d \to H^1(S, \Sigma;
\R)^2 \cong H^1(S, \Sigma; \R^2)$$
and we henceforth identify $T\R^d_+$ with its image. 

Given a positive pair $(\A_0, \B_0)$, as discussed at the end of \S
\ref{subsection: transversal} we obtain a surface $(S, \Sigma)$
together with a measured foliation $\FF = \FF(\A_0)$ and a judicious
transversal $\gamma_0$, so that the return map $\IE(\FF, \gamma)$ is
equal to $\IE_\sigma(\A_0)$, where $I$ parametrizes $\gamma_0$ via the
tranverse measure. 

The positivity condition, together with Proposition \ref{prop:
  positive interpretation} and Theorem
\ref{thm: sullivan1} give us a translation surface structure $\bq =
\bq(\A_0, \B_0)$ whose vertical foliation is $\FF$ and whose image
under hol is $(\A_0, \B_0)$. 

Let $\tau$ be a geometric triangulation on $\bq$. Assume for the
moment that $\tau$ has no vertical edges, and in particular that each
edge for $\tau$ is transverse to $\FF$. Now consider $\A$ very close
to $\A_0$. The map $\IE_\sigma(\A)$ is close to $\IE_\sigma(\A_0)$ and
hence induces a foliation $\FF(\A)$ whose leaves are nearly parallel
to those of $\FF(\A_0)$. More explicitly, $\FF(\A)$ is obtained by
modifying $\FF(\A_0)$ slightly in a small neighborhood of $\gamma$ so
that it remains transverse to both $\gamma$ and $\tau$, and so that
the return map becomes $\IE_\sigma(\A)$. This can be done if $\A$ is
in a sufficiently small neighborhood of $\A_0$. 

Now if $(\A, \B)$ is sufficiently close to $(\A_0, \B_0)$, the pair
$(\A, \B)$ assign to edges of $\tau$ vectors which retain the
orientation induced by $(\A_0, \B_0)$. Hence we obtain a new geometric
triangulation, for which $\FF(\A)$ is transverse to the edges, has
transverse measure agreeing with $\A$ on the edges, and hence is still
realized by the vertical foliation. Moreover $\gamma_0$ is still
transverse to the new foliation and the return map is the correct
one. That is what we wanted to show. 

Returning to the case where $\tau$ is allowed to have vertical edges:
note that at most one edge in a triangle can be vertical. Hence, if we
remove the vertical edges we are left with a decomposition whose cells
are Euclidean triangles and quadrilaterals, and whose edges are
tranverse to $\FF$. The above argument applies equally well to this
decomposition. 

\medskip

We have shown that in a neighborhood of $(\A_0, \B_0)$, the map $\bq$
maps to $\til \HH_\tau$, and is a local inverse for hol. Hence it is
affine and a local homeomorphism, establishing (i). Part (ii) is by
definition part of our construction. Part (iii) follows from the
implication (1) $\implies$ (2) in Theorem \ref{thm: sullivan1}. In
verifying (iv) we use \equ{eq: G action}. 
\end{proof}

The following useful observation follows immediately: 
\begin{cor}
\name{cor: realizable open}
The set of positive pairs is open.

\end{cor}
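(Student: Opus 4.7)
The plan is that the corollary is essentially a direct consequence of part (iii) of Theorem \ref{thm: sullivan2}, which was just established. Specifically, to show that the set of positive pairs is open in $T\R^d_+ \cong \R^d_+ \times \R^d$, I would take an arbitrary positive pair $(\A_0, \B_0)$ and apply Theorem \ref{thm: sullivan2} to produce a neighborhood $\mathcal{U}$ of $(\A_0, \B_0)$ in $T\R^d_+$. Statement (iii) of the theorem asserts that every element of $\mathcal{U}$ is positive; this is exactly the statement that $(\A_0, \B_0)$ is an interior point of the set of positive pairs.

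Since the point $(\A_0, \B_0)$ was arbitrary, this exhibits the set of positive pairs as a union of open neighborhoods, hence as an open subset of $T\R^d_+$. There is no hard step here: once Theorem \ref{thm: sullivan2} has been proved, part (iii) packages the openness assertion directly. The only subtlety worth remarking on is that without Theorem \ref{thm: sullivan2} one would have to argue openness directly from the definition, which requires controlling simultaneously the integrals $\int L\,d\mu$ over the (possibly changing) set $\MM_\A$ of invariant measures, and the finite inequalities $\sum_{n=0}^{m-1} L(\IE^n x_i) > y_i - y_j$ over the (possibly changing) set $\LL_\A$ of connections. The geometric lifting machinery of Theorems \ref{thm: sullivan1} and \ref{thm: sullivan2} circumvents this combinatorial bookkeeping by reducing openness of positivity to the fact that $\hol$ is a local homeomorphism and the positivity condition is an open pairing condition in the cohomological picture.
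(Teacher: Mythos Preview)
Your proposal is correct and matches the paper's approach exactly: the paper states that the corollary ``follows immediately'' from Theorem~\ref{thm: sullivan2}, and your argument via part~(iii) is precisely the intended justification.
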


We also record the following corollary of our construction, coming
immediately from the description we gave of the variation of the
triangles in the geometric triangulation $\tau$: 

\begin{cor}\name{cor: same gamma}
Suppose $\sigma, \til \HH$, a positive pair $(\A_0, \B_0)$, and $\bq:
\mathcal{U} \to \til \HH$ are as in Theorem \ref{thm: sullivan2}. The
structures $\bq(\A, \B)$ can be chosen in their isotopy class so that
the following holds:
A single curve $\gamma$ is a judicious transversal for the vertical
foliation of $\bq(\A, \B)$ for all $(\A, \B) \in \mathcal{U}$; the
flat structures vary continuously with $(\A, \B)$, meaning that the
charts in the atlas, modulo translation, vary continuously; the return
map to $\gamma$ satisfies $\IE(\bq, \gamma) = \IE_\sigma(\A)$. In
particular, for any $\bq' = \bq(\A, \B)$, there is $\vre>0$ such that
for $|s|< \vre, \A(a) = \A+s\B$, we have 
$$
\IE_\sigma(\A(s)) = \IE(h_s \bq', \gamma).
$$
\end{cor}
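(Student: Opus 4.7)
The plan is to read off most of the statement directly from the construction used to prove Theorem \ref{thm: sullivan2}, and then derive the final identity by combining the return-map statement with part (iv) of that theorem. First I would fix $\gamma$ to be the judicious transversal $\gamma_0$ associated with the original lifting of $\A_0$, and fix $\tau$ to be the geometric triangulation (with vertical edges split into triangular and quadrilateral pieces as in that proof) of $\bq(\A_0, \B_0)$. For $(\A, \B)$ sufficiently close to $(\A_0, \B_0)$, the proof of Theorem \ref{thm: sullivan2} already produces $\FF(\A)$ from $\FF(\A_0)$ by a small modification supported in a neighborhood of $\gamma$, designed so that $\gamma$ stays transverse to $\FF(\A)$ and the return map along upward leaves is $\IE_\sigma(\A)$. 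What still needs checking is that $\gamma$ remains judicious: its endpoints stay in $\Sigma$ by construction, and the property of meeting every leaf (including critical ones) is an open condition that is preserved under this small perturbation of the foliation.

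Continuous dependence of the flat structure on $(\A, \B)$, modulo translations in the charts, is then automatic from the triangulation viewpoint: each triangle of $\tau$ becomes a Euclidean triangle whose edges carry the vectors specified by the affine map $(\A, \B) \mapsto \hol(\bq(\A, \B)) \in H^1(S, \Sigma; \R^2)$, and $\bq(\A, \B)$ is obtained by gluing the resulting triangles by translations. Choosing within each isotopy class the representative that realizes $\tau$ as this Euclidean triangulation literally freezes both $\gamma$ and the combinatorics of $\tau$ across $\mathcal{U}$, so the charts depend affinely (in particular continuously) on $(\A, \B)$. For the \emph{in particular} clause, the horocycle identity $h_s \bq(\A, \B) = \bq(\A+s\B, \B)$ for small $s$ is exactly part (iv) of Theorem \ref{thm: sullivan2}, and applying the return-map statement at the nearby point $(\A+s\B, \B) \in \mathcal{U}$ gives $\IE_\sigma(\A(s)) = \IE(\bq(\A+s\B, \B), \gamma) = \IE(h_s \bq', \gamma)$.

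The main obstacle is the uniformity assertion of the first paragraph: that a \emph{single} curve $\gamma$ can be arranged to be judicious for every $\FF(\A)$ with $(\A, \B) \in \mathcal{U}$, with the return maps consistently identified with $\IE_\sigma(\A)$. The geometric triangulation argument from Theorem \ref{thm: sullivan2} essentially supplies this, but one has to track through the perturbation how critical leaves of $\FF(\A_0)$ deform into critical leaves of $\FF(\A)$ and confirm that $\gamma$ still meets each of them, and one must be careful when $\tau$ has vertical edges to choose the associated quadrilateral pieces in a way compatible with the foliation perturbation. Once that is done, the rest of the corollary is bookkeeping together with a direct appeal to part (iv) of Theorem \ref{thm: sullivan2}.
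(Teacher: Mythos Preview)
Your proposal is correct and follows essentially the same route as the paper: the corollary is recorded there as an immediate consequence of the construction in the proof of Theorem~\ref{thm: sullivan2}, namely the continuous variation of the Euclidean triangles of $\tau$ and the fact that $\gamma_0$ remains transverse with the correct return map. Your write-up is simply more explicit about the points the paper leaves implicit (judiciousness surviving the perturbation, the handling of vertical edges, and the derivation of the final identity from part (iv)).
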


\ignore{
\begin{remark}
Let $Q$ be as in \equ{eq: defn Q1}. Note that $L(x) = Q(\E_i, \B)$ 
for $x \in I_i$, and that if $\lambda$ is Lebesgue measure on $I$, then 
\eq{eq: defn Q}{
%\begin{split}
Q(\A, \B) = \sum a_i Q(\E_i, \B) = \sum a_i L|_{I_i} 
% & \sum_{j<i, \, \sigma(j) > \sigma(i)}
%a_ib_j - \sum_{j>i, \, \sigma(j) < \sigma(i)} a_ib_j \\
%= & \sum_{i=1}^d a_i(y_i-
%y'_{\sigma(i)}) 
 =  \int L \, d\lambda
%\end{split}
}
is the area of the polygon $\mathcal P$ defined in \equ{eq: defn
mathcalP}. Thus requirement \equ{eq: positivity} can be interpreted as
saying that the area of a surface with horizontal (resp. vertical)
measured foliation determined by $\A$ (resp. $\B$) should be positive
with respect to any transverse measure for $\A$. 

Note also that if we fix $\FF$ and $\gamma$ then $\R^d$ is identified
with $H_1(S \sm \Sigma; \R)$ via \equ{eq:
concrete cycle}. Under this identification, the standard basis vectors
$\EE_i$ cycles whose images in $H_1(S ; \R)$ is non-trivial, and examining
\equ{eq: defn Q1} one recognizes the intersection pairing on $H_1(S ;
\R)$. Thus \equ{eq: positivity} can also be interepreted as saying
that the classes represented by $\A, \B$ should have positive
intersection. However, \equ{eq: connections positive} admits no 
interpretation on $S$; it is

\end{remark}
}

\section{Mahler's question for interval exchanges and its generalizations}
\name{section: results on iets}
A vector $\xx \in \R^d$ is  {\em very well approximable} if
for some $\vre>0$ there are infinitely many $\pp \in \Z^d, q
\in \N$ satisfying $\|q\xx - \pp \| < q^{-(1/d + \vre)}.$ It is
a classical fact that almost every (with respect to Lebesgue measure) $\xx
\in \R^d$ is not very well approximable, but that the set of very
well approximable vectors is large in the sense of Hausdorff
dimension. 
Mahler conjectured in the 1930's that for almost every (with respect to
Lebesgue measure on the real line) $x \in \R$, the vector $\A(x)$ as in 
\equ{eq: mahler curve}, 
is not very well approximable. This famous conjecture was settled
by Sprindzhuk in the 1960's and spawned many additional questions of a
similar nature. A general formulation of the  
problem is to describe measures $\mu$ on $\R^d$ for which almost
every $\xx$ is not very well approximable. See \cite{dimasurvey} for
a survey. 

In this section we apply Theorems \ref{thm: sullivan1} and \ref{thm:
sullivan2} to analogous problems concerning 
interval exchange transformations. 
Fix a permutation $\sigma$ on $d$ symbols which is irreducible and
admissible. In answer to a conjecture of Keane, it was proved by
Masur \cite{Masur-Keane} and Veech \cite{Veech-zippers} that almost
every $\A$ (with respect to Lebesgue measure on $\R^d_+$) is uniquely
ergodic. On the other hand Masur and Smillie
\cite{MS} showed that the set of non-uniquely ergodic interval exchanges
is large in the sense of Hausdorff dimension. 
In this paper we consider the problem of describing  
measures $\mu$ on $\R^d_+$ such that $\mu$-a.e. $\A$ is uniquely
ergodic. In a celebrated paper \cite{KMS}, it was shown that for
certain $\sigma$ and certain line 
segments $\ell \subset \R^d_+$ (arising from a problem in billiards on
polygons), for $\mu$-almost every $\A \in \ell$,
 $\IE_\sigma(\A)$ is uniquely
ergodic, where $\mu$ denotes Lebesgue measure on $\ell$. This was
later abstracted in 
\cite{Veech-fractals}, where the same result was shown to hold for a
general class of measures in place of $\mu$. Our strategy is strongly
influenced by these papers. 

Before stating our results we introduce more terminology. Let $B(x,r)$
denote the interval $(x-r, x+r)$ in $\R$.  
We say that a finite regular Borel measure $\mu$ on $\R$ is {\em
decaying and Federer} if there are positive  
$C, \alpha, D$ such that 
for every $x \in
\supp \, \mu$ and every $0<
\vre, r<1$, 
\eq{eq: decaying and federer}{
\mu \left(B(x,\vre r) \right) \leq C\vre^\alpha \mu\left(B(x,r) \right)
 \ \ \ \mathrm{and} \ \ \ \mu \left( B(x, 3r) \right) \leq D\mu\left(B(x,r) \right).
}
It is not hard to show that Lebesgue measure, and the
coin-tossing measure on Cantor's middle thirds set, are both decaying
and Federer. More constructions of such measures are given in
\cite{Veech-fractals, bad}. 
Let $\dim$ denote Hausdorff dimension, and for $x\in\supp\,\mu$ let
$$
\underline{d}_\mu(x) = \liminf_{r\to 0}\frac{\log 
\mu\big(B(x,r)\big)}{\log r}. 
$$

Now let
\eq{eq: defn epsn}{
\vre_n(\A) = \min \left \{\left|\IE^k(x_i) - \IE^{\ell}(x_j)\right| :
0\leq k, \ell \leq n, 1 \leq i ,j \leq d-1, (i,k) \neq (j,\ell) 
\right\},
}
where $\IE = \IE_\sigma(\A)$. 
We say that $\A$ is
{\em of recurrence type} if $\limsup n\vre_n(\A)
>0$
and 
{\em of bounded type} if $\liminf
n\vre_n(\A)>0$. 
It is known by work of Masur, Boshernitzan, Veech and Cheung that
if $\A$ is of recurrence type then it is uniquely ergodic, but that
the converse does not hold -- see \S \ref{section: saddles} below for more details. 

We have:
\begin{thm}[Lines]
\name{cor: inheritance, lines}
Suppose $(\A, \B)$ is a positive pair. Then there is $\vre_0>0$ such
that the following hold for $\A(s) = \A+s\B$ and 
for every decaying and Federer measure $\mu$ with $\supp \, \mu
\subset (-\vre_0, \vre_0)$:
\begin{itemize}
\item[(a)]
For $\mu$-almost every $s$, $\A(s)$ is of
recurrence type.   

\item[(b)]
$\dim \, \left \{s \in \supp \, \mu : \A(s) \mathrm{ \ is \ of \ bounded \
type} \right \}  \geq \inf_{x \in \supp \, \mu} \underline{d}_{\mu}(x).$

\item[(c)]
$\dim \left \{s \in (-\vre_0, \vre_0) : \A(s) \mathrm{ \ is \ not \ of \
recurrence \ type } \right \} \leq 1/2.$
\end{itemize}
\end{thm}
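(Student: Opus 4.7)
The plan is to translate the statement about the line $\A(s) = \A + s\B$ into a statement about a horocycle arc in the stratum $\til\HH$, and then apply the quantitative nondivergence theory for horocycles from \cite{with Yair}. By Theorem \ref{thm: sullivan2}(ii)--(iv) combined with Corollary \ref{cor: realizable open}, after shrinking $\vre_0$ we obtain a marked lift $\bq_0 = \bq(\A, \B) \in \til\HH$ with $\bq(\A(s), \B) = h_s \bq_0$ for $|s| < \vre_0$; by Corollary \ref{cor: same gamma} a single judicious transversal $\gamma$ realizes $\IE_\sigma(\A(s))$ as the return map in $h_s \bq_0$ for every such $s$. Writing $\rho(q)$ for the length of the shortest saddle connection of $q$, the standard dictionary between interval exchange combinatorics and flat geometry (Masur, Boshernitzan, Veech, Cheung) then gives: $\A(s)$ is of recurrence type iff $\rho(g_t h_s \bq_0) \not\to 0$; $\A(s)$ is of bounded type iff $\inf_{t\geq 0}\rho(g_t h_s \bq_0) > 0$. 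Using the commutation $g_t h_s = h_{se^t} g_t$, the image set $\{g_t h_s \bq_0 : |s| < \vre_0\}$ is the horocycle arc of length $\vre_0 e^t$ based at $g_t \bq_0$, so items (a)--(c) are questions about the $\mu$-measure, the Hausdorff lower bound, and the Hausdorff upper bound of the set of $s$ whose image, under the family of times $g_t$, penetrates deep into the cusps of $\til\HH$.

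\textbf{Item (a).} Pick a sequence $t_n \to \infty$ and a compact exhaustion $K_j \uparrow \til\HH$. The quantitative nondivergence of \cite{with Yair}, applied to the decaying and Federer measure $\mu$ along the horocycle arc above, gives an upper bound of the form
$$
\mu\bigl(\{\, s : g_{t_n} h_s \bq_0 \notin K_j\,\}\bigr) \leq C\,\bigl(\text{cusp-depth of } K_j^c\bigr)^{\alpha}
$$
with $\alpha > 0$ controlled by the decaying constant of $\mu$, and with implied constants that can be made uniform in $n$. Choosing $K_j$ to depend on $n$ so that the sum converges and invoking Borel--Cantelli, one concludes that for $\mu$-a.e. $s$ the geodesic $\{g_t h_s \bq_0\}$ visits some fixed compact set along a positive-density sequence of times. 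By the dictionary, $\A(s)$ is of recurrence type. The quantitative strengthening promised in the text should follow by keeping track of the rate of return produced by this argument.

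\textbf{Items (b) and (c).} For (b), I would build a Cantor subset of $\supp \mu$ by iterated removal: at each scale $t_n$ discard the subset of $s$ for which $g_{t_n} h_s \bq_0$ leaves a fixed large compact $K$. The decaying hypothesis on $\mu$ bounds the relative $\mu$-mass removed at each stage, while the Federer property propagates local dimension to the surviving set. Running the construction and invoking the mass distribution principle on the residual measure produces a Cantor set of bounded-type parameters of Hausdorff dimension at least $\inf_{x \in \supp\mu}\underline d_{\mu}(x)$. For (c), the set of non-recurrence-type $s$ is contained, for every large $T$, in the union of subintervals of $(-\vre_0,\vre_0)$ where $g_T h_s \bq_0$ lies in the cusp neighborhood $\{\rho < e^{-\eta T}\}$ for some small $\eta > 0$. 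The Lebesgue-measure form of nondivergence controls the total length of these intervals; combined with the volume estimate $\mathrm{vol}\{\rho < \delta\} \asymp \delta^2$ for cusp neighborhoods of a stratum, the natural exponent produced by the covering argument is $1/2$, yielding the Hausdorff dimension bound.

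\textbf{Main obstacle.} The principal technical challenge is to ensure that the nondivergence bounds apply uniformly in $t$: as $t \to \infty$ the basepoint $g_t \bq_0$ may itself drift toward the cusp, so the nondivergence estimate applied to the arc of length $\vre_0 e^t$ needs constants that degrade only mildly with the cusp depth of $g_t \bq_0$. This is precisely the form of statement supplied by \cite{with Yair}, whose constants depend on the decaying/Federer data of $\mu$ together with the initial $\bq_0$ but are otherwise uniform. A secondary subtlety is to guarantee that positivity of the pair $(\A(s), \B)$ persists throughout $(-\vre_0, \vre_0)$, so that the lift $h_s \bq_0$ and the geometric dictionary remain valid even at exceptional $s$; this is handled by shrinking $\vre_0$ using the openness of the positive cone (Corollary \ref{cor: realizable open}).
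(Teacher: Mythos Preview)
Your overall strategy matches the paper exactly: lift the line to a horocycle via Theorem~\ref{thm: sullivan2} and Corollary~\ref{cor: same gamma}, translate recurrence/bounded type for $\A(s)$ into the behavior of $\{g_t h_s q\}$ via Proposition~\ref{lem: relating lengths and disconts}, and then invoke known results about geodesic trajectories in strata.

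There is, however, a genuine gap in your mechanism for (a). If you let $K_{j(n)}$ grow with $n$ so that $\sum_n \mu\bigl(\{s: g_{t_n} h_s q \notin K_{j(n)}\}\bigr) < \infty$, first Borel--Cantelli tells you only that for a.e.\ $s$ one has $g_{t_n} h_s q \in K_{j(n)}$ eventually; since $K_{j(n)}$ exhausts $\HH$ this says nothing about returning to a \emph{fixed} compact set, which is what ``recurrence type'' requires. And with $K$ fixed the nondivergence bound is a constant independent of $n$, so the series diverges and Borel--Cantelli is unavailable. The paper instead fixes a single $K = K_\vre$ with $C'(\vre/\rho_0)^{\lambda\alpha} < 1$ and argues by contradiction and density (Corollary~\ref{cor: return to compacts}): if the set of divergent $s$ had positive $\mu$-measure, a density point would yield a short interval $J$ on which, for some large $t$, the fraction of $s \in J$ with $g_t h_s q \notin K$ exceeds the nondivergence upper bound~\equ{eq - precise yet long}. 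For Lebesgue $\mu$ the paper also offers a shortcut: cite \cite{KMS} for the circle action $r_\theta$ and transfer to $h_s$ via the elementary identity in Proposition~\ref{lem: horocycles and circles}.

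For (b) and (c) the paper does not redo any analysis: once the lift is in place it simply invokes \cite{with Dima} and \cite{bad} for (b), and \cite{Masur Duke} for (c), again using Proposition~\ref{lem: horocycles and circles} to pass between the $r_\theta$- and $h_s$-parametrizations. Your Cantor-set sketch for (b) is indeed the shape of the arguments in those references. Your sketch for (c), though, assumes that a non-recurrent $s$ satisfies $\rho(g_T h_s q) < e^{-\eta T}$ for all large $T$ and a fixed $\eta > 0$; mere divergence only gives $\rho \to 0$, possibly arbitrarily slowly, so this containment is not automatic and Masur's actual covering argument in \cite{Masur Duke} is more delicate than the heuristic you outline.
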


\begin{thm}[Curves]
\name{cor: ue on curves}
Let $I$ be an interval, let $\mu$ be a decaying and Federer measure on
$I$, and let $\beta: I \to
\R^d_+$ be a $C^2$ curve, such that for $\mu$-a.e. $s \in I$,
$(\beta(s), \beta'(s))$ is positive. Then for $\mu$-a.e. $s \in I$, 
$\beta(s)$ is of recurrence type. 
\end{thm}

Following some preliminary work, we will prove Theorem \ref{cor:
  inheritance, lines} in \S
\ref{section: inheritance lines} and Theorem \ref{cor: ue on curves} in \S
\ref{section: mahler curves}. In \S \ref{section: nondivergence} we
will prove a strengthening of Theorem \ref{cor: inheritance,
  lines}(a). 

%Decay conditions. The bilinear
%form. Cones. 
%General formulations.  

\section{Saddle connections, compactness criteria}
\name{section: saddles}
A link between the $G$-action and unique ergodicity questions was made
in the following fundamental result.
\begin{lem}[{Masur \cite{Masur Duke}}]\name{lem: Masur}
If $q \in \HH$ is not uniquely ergodic then the trajectory
$\{g_tq: t \geq 0\}$ is {\em divergent}, i.e. for any compact
$K \subset \HH$ there is $t_0$ such that for all $t
\geq t_0$, $g_tq \notin K.$ 
\end{lem}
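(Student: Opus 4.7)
The plan is to prove the contrapositive: if $\{g_t q: t\ge 0\}$ is not divergent then $q$ is uniquely ergodic. Two ingredients are required. The first is the Mumford--Masur--Smillie compactness criterion for strata, which says that $K \subset \HH$ has compact closure if and only if there is $\eta > 0$ such that every $q' \in K$ has no saddle connection of length less than $\eta$. The second is the elementary fact that $g_t$ acts on holonomy vectors by $(x,y) \mapsto (e^{t/2}x, e^{-t/2}y)$, so short saddle connections on $g_t q$ correspond to approximate vertical saddle connections on $q$.

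Assume for contradiction that there is a compact $K \subset \HH$ and a sequence $t_n \uparrow \infty$ with $g_{t_n} q \in K$. The compactness criterion supplies $\eta > 0$ with $g_{t_n} q$ having no saddle connection of length below $\eta$, and after passing to a subsequence one may assume $g_{t_n} q \to q_\infty$ in $\HH$. Let $\nu$ be any ergodic invariant probability measure for the vertical flow of $q$; the goal is to show that $\nu$ lies in the Lebesgue class, contradicting non-unique ergodicity. By Birkhoff's theorem, $\nu$ can be recovered from empirical distributions along long vertical orbit segments of $q$. Applying $g_{t_n}$ rescales the vertical direction by $e^{-t_n/2}$, producing vertical segments of bounded length on $g_{t_n} q$. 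The absence of short saddle connections on $g_{t_n} q$, combined with a Delaunay-type argument as in \cite{MS}, shows that each such segment is contained in a Euclidean rectangle of area bounded below, with height and width of controlled size; pulling back to $q$ exhibits the chosen orbit segment sitting inside a tall thin flow box of definite area. A standard ergodic averaging over these boxes then forces $\nu$-mass to be proportional to area, so $\nu$ coincides with the Lebesgue class.

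The main obstacle is this last step: transferring geometric control on $g_{t_n} q$ (no short saddle connections) into quantitative equidistribution of $\nu$ on the original $q$. This requires a uniform lower bound on the widths of horizontal strips enclosing prescribed vertical segments, provided by the compactness criterion, together with a careful comparison of invariant measures along the family $\{g_{t_n} q\}$. This is the content of Masur's original argument in \cite{Masur Duke}, which the authors invoke rather than reprove.
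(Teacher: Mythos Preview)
The paper does not prove this lemma at all: it is stated as a known result attributed to Masur \cite{Masur Duke}, and the paper simply cites it and moves on (noting only that Masur's original result is in fact stronger, being phrased for the full moduli space of quadratic differentials). So there is no ``paper's own proof'' to compare against.

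Your proposal is not so much a proof as an outline of Masur's original argument, and you correctly acknowledge this in your final sentence. The sketch is broadly accurate in spirit --- the mechanism is indeed that recurrence of $g_t q$ to a compact set yields, via the compactness criterion, arbitrarily tall rectangles of definite area in $q$, and these force any transverse invariant measure to be absolutely continuous with respect to Lebesgue. The step you flag as the ``main obstacle'' (converting geometric control on $g_{t_n}q$ into a statement about invariant measures on $q$) is exactly where the real work lies, and your sketch of it is too vague to count as a proof; but since the paper itself treats the lemma as a black box, this is entirely consistent with how the result is used here.
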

Masur's result is in fact stronger as it provides divergence in
the moduli space of quadratic differentials. The converse statement is
not true, see \cite{CM}. It is known (see \cite[Prop. 3.12]{Vorobets}
and \cite[\S 2]{Veech-fractals}) 
that $\A$ is of recurrence (resp. bounded) type 
if and only if the forward geodesic trajectory of any of its lifts
returns infinitely often to (resp. stays in) some compact subset of
$\HH$. It follows using Lemma \ref{lem: Masur} that if $\IE$ is 
of recurrence type then it is uniquely ergodic. In this section we
will prove a quantitative version of these results, linking the
behavior of $G$-orbits to the size of the quantity $n \vre_n(\A)$. 

We denote the set
of all saddle connections for a marked translation surface $\bq$ by
$\LL_\bq$. There is a natural 
identification of $\LL_\bq$ with $\LL_{g\bq}$ for any $g \in G$. We define 
$$\phi(q) = \min \left\{ \ell(\alpha,\bq ) : \alpha \in \LL_\bq
\right\},$$
where $\bq \in \pi^{-1}(q)$ and 
$\ell(\alpha, \bq) = \max \{|x(\alpha, \bq) |,
|y(\alpha, \bq) | \}.$ Let 
$\HH_1$ be the area-one sublocus in $\HH$, i.e. the set of $q \in \HH$
for which the total area of the surface is one. 
A standard compactness criterion for each stratum asserts that a set
$X\subset \HH_1$ is compact if and only if 
$$\inf_{q \in X} \phi(q) >0.$$
Thus, for each $\vre>0$, 
$$K_{\vre} = \left\{q \in \HH_1: \phi(q) \geq \vre \right\}
$$
is compact, and $\{K_\vre\}_{\vre >0}$ form an exhaustion of
$\HH_1$. We have:

\begin{prop}
\name{lem: relating lengths and disconts}
Suppose $\gamma$ is judicious for $q$. Then there
are positive $\kappa, c_1, c_2, n_0$ such that for $\IE = \IE(q,
\gamma)$ we have

\begin{itemize}
\item If $n\ge n_0,$ $\zeta\ge n\vre_n(\IE)$, and $e^{t/2}=n\sqrt{2c_2/\zeta}$, then
\eq{eq: first one}{
\phi(g_tq) \le \kappa \sqrt\zeta. %%%%n\vre_n(\IE).
}
\item If $n = \lfloor \kappa e^{t/2}\rfloor$, then
\eq{eq: second one}{
n\vre_n(\IE) \leq \kappa \phi(g_{t}q).
}
\end{itemize}
Moreover, 
 $\kappa, c_1, c_2, n_0$ may be taken to be
uniform for $q$ ranging over a compact subset of $\HH$ and $\gamma$
ranging over smooth curves of uniformly bounded length, with return
times to the curve bounded above and below. 
\end{prop}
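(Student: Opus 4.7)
Both inequalities translate between the vertical dynamics of $\IE$ on $\gamma$ and the length of saddle connections on $g_tq$, exploiting that $g_t$ stretches horizontal holonomy by $e^{t/2}$ and contracts vertical holonomy by $e^{-t/2}$. Let $c_1, c_2 > 0$ be lower and upper bounds on return times of vertical leaves to $\gamma$, which exist by the standing hypothesis on $\gamma$. The discontinuities $x_1, \ldots, x_{d-1}$ are precisely the footpoints on $\gamma$ of downward vertical leaves from singularities in $\Sigma$, and $\IE^k(x_i)$ is the $k$-th return of the vertical leaf upward from $x_i$ to $\gamma$.

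\emph{Proof of \equ{eq: first one}.} Given $n\vre_n(\IE) \leq \zeta$, choose indices $i,j$ and iterates $k,\ell \leq n$ with $(i,k) \neq (j,\ell)$ that realize $|\IE^k(x_i) - \IE^\ell(x_j)| \leq \zeta/n$, and let $\xi, \xi' \in \Sigma$ be the singularities lying above $x_i, x_j$ along $\FF$. On $q$, concatenate (a) the upward vertical segment from $\xi$ threading $x_i, \IE(x_i), \ldots, \IE^k(x_i)$, (b) the short sub-arc of $\gamma$ joining $\IE^k(x_i)$ to $\IE^\ell(x_j)$, and (c) the downward vertical segment from $\IE^\ell(x_j)$ to $\xi'$. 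The resulting path $\delta$ has vertical length at most $2nc_2$ and horizontal length at most $\zeta/n$. Choosing $e^{t/2} = n\sqrt{2c_2/\zeta}$, the total length of $g_t\delta$ on $g_tq$ is at most $2nc_2 e^{-t/2} + (\zeta/n)e^{t/2} = 2\sqrt{2c_2\zeta}$. Pulling $\delta$ taut in its homotopy class rel endpoints yields a concatenation of saddle connections of total length at most that of $g_t\delta$; in particular the first such saddle connection $\alpha$ satisfies $\ell(\alpha, g_tq) \leq 2\sqrt{2c_2\zeta}$, giving $\phi(g_tq) \leq \kappa\sqrt{\zeta}$ with $\kappa = 2\sqrt{2c_2}$.

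\emph{Proof of \equ{eq: second one}.} When $\phi(g_tq)$ exceeds a fixed constant depending only on $|\gamma|$, the inequality is vacuous since $\vre_n \leq |\gamma|$, so assume $\phi(g_tq)$ is small. Let $\alpha$ realize $\phi(g_tq) = \max(|x(\alpha, g_tq)|, |y(\alpha, g_tq)|)$. On $q$ its horizontal holonomy is at most $e^{-t/2}\phi(g_tq)$ and its vertical holonomy at most $e^{t/2}\phi(g_tq)$. Since between successive crossings of $\gamma$ the segment $\alpha$ traverses at least $c_1$ of vertical length, its number of crossings is at most $e^{t/2}\phi(g_tq)/c_1 + 1$, which is $\leq n = \lfloor \kappa e^{t/2}\rfloor$ provided $\kappa \leq c_1$ and $\phi(g_tq)$ is sufficiently small. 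The lower endpoint of $\alpha$ lies above some discontinuity $x_i$ and its upper endpoint above some $x_j$; tracking along $\alpha$ shows that there exist $k, \ell \leq n$ with
$$|\IE^k(x_i) - \IE^\ell(x_j)| \leq |x(\alpha, q)| \leq e^{-t/2}\phi(g_tq),$$
the point being that vertical leaves through the crossings of $\alpha$ with $\gamma$ drift horizontally by no more than the total horizontal holonomy of $\alpha$. Hence $\vre_n(\IE) \leq e^{-t/2}\phi(g_tq)$, and multiplying by $n \leq \kappa e^{t/2}$ gives \equ{eq: second one}.

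\textbf{Main obstacle and uniformity.} The most delicate step is the promotion in the first part of the broken path $\delta$ to an honest saddle connection; this relies on the standard geodesic-representative principle for flat surfaces with conical singularities, namely that any path between points of $\Sigma$ is homotopic rel endpoints to a concatenation of saddle connections of total length no greater. The uniformity of $\kappa, c_1, c_2, n_0$ over compact subsets of $\HH$ and over smooth curves $\gamma$ with return times in a fixed interval is then a matter of bookkeeping, since every quantity entering the estimates—return times, area, length of $\gamma$, and the injectivity radius of the flat surface—is controlled uniformly on such families.
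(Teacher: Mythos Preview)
Your proof of \equ{eq: first one} is essentially the paper's: build a relative cycle from the close pair of orbit points, estimate its holonomy, and straighten. The paper first reduces to the form $|x_i-\IE^r x_j|$ (so only one long vertical run is needed) and uses a monotonicity (Gauss--Bonnet) argument to pass from the straightened concatenation to a single saddle connection; your use of total length instead is fine and only costs a constant.

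Your proof of \equ{eq: second one}, however, has a real gap. You take $x_i,x_j$ to be the discontinuities lying vertically \emph{below the two endpoints} $\xi,\xi'$ of the short saddle connection $\alpha$, and then assert that some iterates $\IE^k(x_i),\IE^\ell(x_j)$ with $k,\ell\le n$ are within $|x(\alpha,q)|$ of each other. This is not justified: the upward orbit of $x_i$ hits the singularity $\xi$ at the very first step, and thereafter continues along whichever prong the one-sided convention dictates---there is no reason this prong is adjacent to $\alpha$, so $\IE^k(x_i)$ can wander arbitrarily far from $\alpha$ (and from any iterate of $x_j$). The slogan ``vertical leaves through the crossings of $\alpha$ with $\gamma$ drift by at most $|x(\alpha,q)|$'' does not rescue this: successive $\gamma$-crossings of $\alpha$ are not, in general, $\IE$-iterates of one another, precisely because singularities in the strip around $\alpha$ disrupt the correspondence.

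The paper supplies the missing geometric input: it isometrically immerses the vertical strip $[0,x_0]\times\R$ (minus upward/downward rays from singular preimages) into $S$ with $\alpha$ as a diagonal, and looks at the component $\gamma_1$ of $\psi^{-1}(\gamma)$ that meets the boundary ray at the \emph{upper} endpoint $v_+$ highest. One endpoint of $\gamma_1$ is then automatically a discontinuity $x_i$ (its upward leaf hits $\psi(v_+)$ first), while the other endpoint lies on some interior ray $R_\sigma$ and is therefore a bounded $\IE$-iterate of a discontinuity $x_j$ associated to a \emph{different} singularity $\psi(\sigma)$---not, in general, the lower endpoint of $\alpha$. The width of $\gamma_1$ is at most $x_0$, which gives \equ{eq: second one}. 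In short, the two discontinuities that work are not the ones you chose; the strip construction is what pins down the correct pair.
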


\begin{proof}
We first claim that 
\eq{eq: defn epsn alt}{
\vre_n(\A) = \min \left\{\left|x_i - \IE^r
x_j\right|: 1 \leq i,j \leq d-1, |r| \leq n, (j,r) \neq (i,0)\right\}.
}
Indeed, if the minimum
in \equ{eq: defn epsn} is equal to $|\IE^kx_i - \IE^\ell x_j|$ with
$\ell \geq k \geq 1$, then the interval between
$\IE^kx_i$ and $\IE^\ell x_j$ does not contain discontinuities for
$\IE^{-k}$ (if it did the
minimum could be made smaller). This implies that $\IE^{-k}$ acts as
an isometry on this interval so that $|x_i - \IE^{\ell - k}x_j| = \vre_n(\IE)$.
Similarly, if the minimum in \equ{eq: defn epsn alt} is obtained for
$i,j$ and $r=-k\in[-n,0]$ then the interval between $x_i$ and
$\IE^{-k} x_j$ has no discontinuities of $\IE^k$, so that the same
value is also obtained for $|\IE^k x_i - x_j|$. Hence the minimum in
\equ{eq: defn epsn alt} equals the minimum in \equ{eq: defn epsn}.

Let $\bq \in \pi^{-1}(q)$ and let $n_0\ge 1$. 
Suppose the return times to
$\gamma$ along the vertical foliation
are bounded below and above by $c_1$ and $c_2$,
respectively. 
Making $c_2$ larger we can also assume that the total
variation in the vertical direction along $\gamma$ is no more than
$c_2$.  
Write $n \vre_n(\IE) = n|x_i - \IE^rx_j| \le\zeta$ and let $t$ be as in
\equ{eq: first one}.
%% so that $e^{t/2} = \sqrt{\frac{2c_2}{ \zeta}} n.$
Let $\sigma_i$ and $\sigma_j$ be the singularities of $\bq$ lying
vertically above $x_i$ and $x_j$. 
Let $\alpha$ be the path moving vertically from $\sigma_j$ along the
forward trajectory of $x_j$
until $\IE^rx_j$, then along $\gamma$ to $x_i$, and vertically up to $\sigma_i$.
Then $|x(\bq,\alpha)| = \vre_n(\IE) \le
\zeta/n$ and $|y( \bq,\alpha)| \leq c_2r + c_2  \leq 2nc_2$ for $n
\geq n_0$. Therefore, since $e^{t/2} = n\sqrt{2c_2/\zeta}$, we have
\[
\begin{split}
|x(g_t\bq,\alpha)| & = e^{t/2} |x(\bq,\alpha)| \leq \sqrt{2c_2\zeta}, \\
|y(g_t\bq,\alpha)| & = e^{-t/2} |y( \bq,\alpha)| \leq \sqrt{2c_2\zeta},
\end{split}
\]
so $\ell(\alpha, g_{t}\bq) \leq \kappa \sqrt\zeta$, where $\kappa = \sqrt{2c_2}.$
A shortest representative for $\alpha$ with respect to $g_{t}\bq$
is a concatenation $\bar\alpha$ of saddle connections. Since $\alpha$ travels
monotonically along both horizontal and vertical foliations of $\bq$,
a Gauss-Bonnet argument tells us that $\bar\alpha$ does the same, so that
the coordinates of its saddle connections have consistent signs. 
Hence the same bound holds for each of those saddle
connections, giving \equ{eq: first one}.

\medskip

Now we establish (\ref{eq: second one}). 
Let $\alpha$ be a saddle connection minimizing
$\ell(\cdot,g_t\bq)$, and write
$x_t = x(g_t\bq,\alpha)$ and 
$y_t = y(g_t\bq,\alpha)$.  Without loss of generality (reversing the
orientation of $\alpha$ if necessary)
we may assume that $x_t \ge 0$.
Minimality means
$$\phi = \phi(g_{t}q) = \max (x_t,|y_t|).$$
In $\bq$, the coordinates of $\alpha$ satisfy
$$
x_0 = e^{-t/2}x_t \le  e^{-t/2}\phi
$$
and 
$$
|y_0| = e^{t/2}|y_t| \le  e^{t/2}\phi.
$$
\begin{figure}[htp] 
\includegraphics{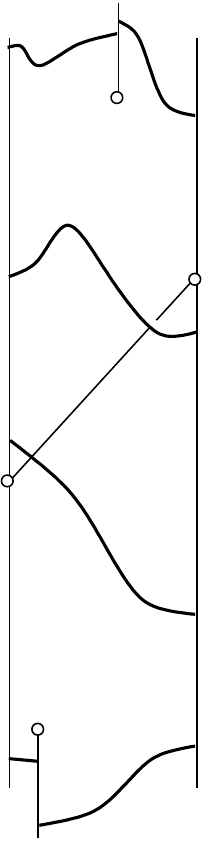}
\caption{The vertical strip $\UU$ minus rays $R_\sigma$ immerses
  isometrically in $S$.}
\name{figure: vstrip}
\end{figure}
Let $\UU$ be the strip $[0,x_0]\times\R$ in $\R^2$, and let $v\subset
\UU$ be the line segment connecting $v_-=(0,0)$ to
$v_+=(x_0,y_0)$. A neighborhood of $v$ in $\UU$ embeds in $S$ by a local
isometry that preserves horizontal and vertical directions. We can
extend this to an isometric immersion $\psi:\UU' \to
S$, where $\UU'$ has the following form: 
There is a discrete set $\hhat\Sigma \subset \UU \setminus int(v)$, and for
each $\sigma=(x,y)\in\hhat \Sigma$ a vertical ray $R_\sigma$ of the
form $\{x\}\times(y,\infty)$ (``upward pointing'') or 
$\{x\}\times(-\infty,y)$ (``downward pointing''), so that the rays are
pairwise disjoint,  disjoint from $v$, and $\UU' = \UU \setminus
\bigcup_{\sigma\in\hhat\Sigma} R_\sigma$ (see Figure \ref{figure: vstrip}).
The map $\psi$ takes
$\hhat\Sigma$ to $\Sigma$, and it is defined by extending the
embedding at each $p\in v$ maximally along the vertical line through
$p$ (in both directions) until the image encounters a singularity in $\Sigma$. 
(We include $v_-$ and $v_+$ in $\hat\Sigma$, and for these two points
delete both an upward and a downward pointing ray.) 

%% Note that all the
%% rays actually avoid the whole rectangle $(0,x_0)\times(0,y_0)$,
%% because otherwise a connection would exist that contradicts the
%% minimality of $\alpha$. \comyair{Actually don't use this last bit... strange}

Let $\hhat\gamma$ be the preimage $\psi^{-1}(\gamma)$. This is a union
of arcs properly embedded in $\UU'$, and transverse to the vertical
foliation in $\R^2$. By definition of $c_1$ and $c_2$, each vertical
line in $\UU'$ is cut by $\hhat\gamma$ into segments of length at
least $c_1$ and at most $c_2$. Moreover the total vertical extent of
each component of $\hhat\gamma$ is at most $c_2$. 

Consider $\gamma_1$ the component of $\hhat\gamma$ that meets the
downward ray based at $v_+$ at the highest point $\hat r$. The other
endpoint $\hat p$ of $\gamma_1$ lies on some other ray $R_\sigma$.

The width of $\gamma_1$ is at most $x_0$, so the image points $r =
\psi(\hat r) $ and $p = \psi(\hat p)$ satisfy $|p-r|\le x_0$, with
respect to the induced transverse measure on $\gamma$. We now check
that $p$ and $r$ are images of discontinuity points of $\IE$, by
controlled powers of $\IE$.

By choice of $\gamma_1$, the upward leaf emanating from $r$ encounters
the singularity $\psi(v_+)$ before it returns to $\gamma$, and hence
$r$ itself is a discontinuity point $x_i$. 

For $p$, Let us write $\sigma = (x,y)$ and $p=(x,y')$. Suppose first
that $y_0\ge 0$. There are now two cases. If $R_\sigma$ lies above $v$ (and hence
is upward pointing): we have $y'\ge y \ge 0$, and moreover (since the
vertical variation of $\gamma_1$ is bounded) $y' \le y_0+c_2$. 
The segment of
$R_\sigma$ between $\sigma $ and $p$ is cut by $\hhat \gamma$
(incident from the right) into at
most  $(y'-y)/c_1$ pieces, and this implies that there is some $k\ge 0$
bounded by
$$
k \le \frac{y_0+c_2}{c_1} \le \frac{e^{t/2}\phi + c_2}{c_1}
$$
such that $p= T^k x_j$ for some discontinuity $x_j$.

If $R_\sigma$ lies below $v$ and is downward pointing: we have $y' \le
y \le y_0$ and $y' \ge y_0-2c_2$, so that by the same logic there is $k\ge
0$ with 
$$
k \le \frac{2c_2}{c_1}
$$
such that $T^k p = x_j$ for some discontinuity $x_j$. 

Hence in either case we have 
\eq{eq: x0 bound}{
|T^m x_j - x_i| \le x_0 \le e^{-t/2}\phi
}
where $-2c_2/c_1 \le m \le (y_0+c_2)/c_1$. 

If $y_0 <0$ there is a similar analysis, yielding the bound 
\equ{eq: x0 bound} where now $(y_0-2c_2)/c_1 \le m \le c_2/c_1$.

Noting that $\phi < 1$ by
area considerations, 
if we take $n = \lfloor
\kappa e^{t/2}\rfloor$, where $\kappa  = 1+(1+2c_2)/c_1$, then we
guarantee  $|m| \le n$, and hence  get
$$
n\vre_n \le \kappa e^{t/2} e^{-t/2}\phi \le \kappa\phi.
$$

\medskip

\end{proof}

\ignore{

xxxxxxxxx

\begin{prop}
\name{lem: relating lengths and disconts}
Suppose $\gamma$ is judicious for $q$. Then there
are positive $\kappa, c_1, c_2, n_0$ such that for $\IE = \IE(q,
\gamma)$ we have
\eq{eq: first one}{
n\vre_n(\IE) < \zeta, \, n \geq n_0, \, t=2\log n +
\log \frac{2c_2}{\zeta} \ \implies \
\phi(g_tq) \leq \kappa \sqrt{\zeta}
%\phi(g_{t}q) \leq \kappa \sqrt{n\vre_n(\IE)} \ \ \mathrm{for}\ t=t(n)= 
%2\log n   - \log \vre_n(\IE), \, n\geq n_0 
}
and
\eq{eq: second one}{
n\vre_n(\IE) \leq \kappa \phi(g_{t}q) \ \ \mathrm{for} \ 
n=n(t)= %\phi (g_tq) 
e^{t/2}/c_1% \geq n_0
.
}
Moreover $\kappa, c_1, c_2, n_0$ may be taken to be
uniform for $q$ ranging over a compact subset of $\HH$ and $\gamma$
ranging over smooth curves of uniformly bounded length, with return
times to the curve bounded above and below. 
\end{prop}

\combarak{There is a small issue here which perhaps requires a better
explanation. What does it mean for these curves to have uniformly
bounded lengths? I think of these curves as sitting on some fixed
surface with a smooth structure. Then we can measure their lengths in
the different translation surface structures. The confusing point is
that there is an equivalence relation identifying translation
structures which differ by a homeo, and this homeo may distort the
length of the curves, so the quantity is not well-defined. Should I
say that we take a homeo for which the 
curve has small length?? Actually for the applications, we can assume
that $\gamma$ is the `same' curve (in some marking), whatever that means. } 
\begin{proof}
We first claim that $\vre_n(\A) = \min \{|x_i - \IE^r
x_j|: 1 \leq i,j \leq d-1, 0 \leq r \leq n, (j,r) \neq (i,0)\}.$ Indeed, if the minimum
in \equ{eq: defn epsn} is equal to $|\IE^kx_i - \IE^\ell x_j|$ with
$\ell \geq k \geq 1$, then the interval between
$\IE^kx_i$ and $\IE^\ell x_j$ does not contain discontinuities for
$\IE^{-k}$ (if it did the
minimum could be made smaller). This implies that $\IE^{-k}$ acts as
an isometry on this interval so that $|x_i - \IE^{\ell - k}x_j| = \vre_n(\IE)$.

Let $\bq \in \pi^{-1}(q)$ and let $n_0=3$. Suppose the return times to
$\gamma$ (i.e.
the length of a segment going from $\gamma$ back to $\gamma$ along
vertical leaves) is bounded below and above by $c_1$ and $c_2$
respectively. Making $c_2$ larger we can assume that the total
variation in the vertical direction along $\gamma$ is no more than
$c_2$.  
Write $n \vre_n(\IE) = |x_i - \IE^rx_j| <\zeta$ and $t$ as in
\equ{eq: first one}, so that 
$e^{t/2} = \sqrt{\frac{2c_2}{ \zeta}} n.$
Let $\alpha'$ be the path which moves vertically from $x_j$ until
$\IE^rx_j$ and then along $\gamma$ to $x_i$, and let $\alpha$ be a
path obtained by joining to the two endpoints of $\alpha'$ the
shortest vertical segments to singularities. Then $x(\alpha, \bq) <
\zeta/n$ and $y(\alpha, \bq) \leq c_2r + 3c_2  \leq 2nc_2$ for $n \geq n_0$. Therefore
\[
\begin{split}
x(\alpha, g_{t}\bq) & = e^{t/2} x(\alpha, \bq) < \sqrt{2c_2\zeta}, \\
y(\alpha, g_{t}\bq) & = e^{-t/2} y(\alpha, \bq) \leq \sqrt{2c_2\zeta},
\end{split}
\]
so $\ell(\alpha, g_{t}\bq) \leq \kappa \sqrt{\zeta}$, where $\kappa = \sqrt{2c_2}.$
A shortest representative for $\alpha$ with respect to $g_{t}\bq$
is a concatenation of saddle connections, which are not longer than
$\alpha$. This proves \equ{eq: first one}.

Now let $\alpha$ be the shortest saddle connection for $g_tq$, that is, 
$$\phi = \phi(g_{t}q) = \max \{|x|, |y|\},
\ \ \ \ \mathrm{where} \ x= x(\alpha, g_t \bq), \ y = y(\alpha, g_t\bq) $$
(if there are several $\alpha$ where this minimum is attained choose
one with minimal $x$-component, and if there are several of these,
choose the one with minimal $y$-component). 
By choosing an orientation on $\alpha$ we can assume that $y>0$, and
we assume first that $x>0$.
Set $c = c_1/2$ and $n=n(t)$ as in
\equ{eq: second one}, so 
that  
\[
\begin{split}
x & \leq \phi
e^{-t/2} = \frac{2\phi}{c_1n} \\\
y 
& \leq \phi
e^{t/2} = \frac{\phi c_1n}{2}.
\end{split}
\]
Make $n_0$ large enough so that $n_0 \geq 2c_2/c_1$. 
%and 
%\eq{eq: condition on n0}{
%x \leq \max \{x_1, x_d - x_{d-1} \}.
%}
Suppose $\alpha$ starts at singularity $\xi_1$ and ends at
singularity $\xi_2$. Let $L$ be a vertical leaf starting at
$\xi_1$ making an angle of no more than $\pi$ with
$\alpha$. 
Consider the rectangle $R = [0,x] \times [-y,y]$ in the
plane and consider an isometry
$\tau$ 
of a small neighborhood $R$ of the origin, mapping the origin to
$\xi_1$ 
and the vertical side to $L$. The only possible obstruction to
continuing $\tau$ to an embedding of $R$ in $S$ which is isometric with
respect to $q$, is that an
interior point of $R$ map to a singularity. However this is
impossible since it would imply that $S$ contains a saddle connection
shorter than $\alpha$ with respect to $g_t\bq$. Thus $R$ is embedded,
its left boundary segment lies 
on $L$, and $\alpha$ cuts across it. The only singularities on the
boundary of $R$ are $\xi_1 = \tau(0,0)$ and $\xi_2 = \tau(x,y)$. Now
consider the points $x_i$ (resp. $x_j, p$) obtained 
by continuing down along a vertical leaf on the side of $R$ from $\xi_1$
(resp. $\xi_2$, $\tau(0,y)$) 
until the first intersection with $\gamma$. By construction $x_i, x_j$ are
discontinuities of $\IE$. Since $x_i$ and $p$ lie on the
same leaf there is $r$ such that $\IE^r x_i = p$,
%. By \equ{eq: condition
%on n0}, 
$x=|p-x_j|=|\IE^r(x_i) - x_j|$. By construction the line
segment from $x_i$ to $p$ along $L$  
has length at least $rc_1$ so $y \geq rc_1 - c_2$, that is
$$rc_1 \leq y+c_2 \leq c_1n/2 + c_2 \leq c_1n$$
so that $r \leq n$. 
This implies 
$$\vre_n(\IE) \leq x \leq 2\phi/c_1n,
$$
so setting $\kappa = 2/c_1$ we obtain \equ{eq: second one}. 

For the case $x<0$, we choose $R= [x, 0] \times [-y,y]$, let $L$ be
the leaf going down from 
$\xi_2$ and making an angle of less than $\pi$ with $\alpha$, and
embed $R$ as before. We choose $p$ and $x_j$ as before. Then there is
a discontinuity $x_i$ which is the intersection of a vertical segment
from $\xi_1$ with $\gamma$, such that $\IE(x_i), \IE^2(x_i), \ldots,
\IE^r(x_i)=p$ lie on the right hand edge of $R$. With these choices of
$x_i, x_j, p$ the argument is the same as the one given above. 
\end{proof}

}

\section{Mahler's question for lines}\name{section:
inheritance lines}
In this section we will derive Theorem \ref{cor: inheritance, lines}
from Theorem \ref{thm: sullivan2} and earlier results
of \cite{KMS, Masur Duke, with Dima, bad}. 
We will need the following:
\begin{prop}
\name{lem: horocycles and circles}
For any $|\theta| < \pi/2$, there is a bounded subset $\Omega \subset G$
such that for any $t \geq 0$, and any $q \in \HH$, there is $w \in
\Omega$ such that $g_t h_{-\tan \theta} q = w g_t r_{\theta} q.$ 
\end{prop}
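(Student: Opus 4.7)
The plan is to reduce the statement to a direct computation in $G=\SL(2,\R)$, using the left $G$-action on $\HH$. Since $q$ is an arbitrary translation surface and both sides of $g_t h_{-\tan\theta} q = w\, g_t r_{\theta} q$ depend on $q$ only through the $G$-action, it suffices to exhibit a bounded set $\Omega\subset G$ such that
\[
w_{t,\theta}\; :=\; g_t\, h_{-\tan\theta}\, r_{-\theta}\, g_{-t} \;\in\; \Omega
\qquad\text{for every } t\ge 0.
\]
Once this is done, we take $w = w_{t,\theta}$ for each pair $(t,\theta)$.

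First, I will compute the product $M_\theta := h_{-\tan\theta}\, r_{-\theta}$ directly. A quick matrix multiplication gives
\[
M_\theta \;=\; \begin{pmatrix} 1 & -\tan\theta \\ 0 & 1 \end{pmatrix}
\begin{pmatrix} \cos\theta & \sin\theta \\ -\sin\theta & \cos\theta \end{pmatrix}
\;=\;
\begin{pmatrix} 1/\cos\theta & 0 \\ -\sin\theta & \cos\theta \end{pmatrix},
\]
i.e.\ $M_\theta$ is lower triangular with diagonal entries $(1/\cos\theta,\cos\theta)$ and lower-left entry $-\sin\theta$. (This is essentially the $\bar{N}A$-part of the $NA\bar{N}$ decomposition of $r_{-\theta}$.) Note that both diagonal entries are bounded away from $0$ and $\infty$ as long as $|\theta|<\pi/2$.

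Next, conjugation by $g_t$ acts on $2\times2$ matrices by scaling off-diagonal entries, so
\[
w_{t,\theta} \;=\; g_t M_\theta g_{-t}
\;=\;
\begin{pmatrix} 1/\cos\theta & 0 \\ -e^{-t}\sin\theta & \cos\theta \end{pmatrix}.
\]
For $t\ge 0$ the lower-left entry satisfies $|e^{-t}\sin\theta|\le|\sin\theta|\le 1$, while the diagonal entries are fixed constants depending only on $\theta$. Hence $w_{t,\theta}$ lies in the compact set
\[
\Omega \;=\; \left\{ \begin{pmatrix} 1/\cos\theta & 0 \\ s & \cos\theta \end{pmatrix} \,:\, |s|\le |\sin\theta|\right\}\subset G,
\]
which proves the claim. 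There is no serious obstacle here: the whole argument is a one-line matrix identity, and the only thing to notice is that the upper-triangular factor $h_{-\tan\theta}$ is precisely chosen so as to cancel the upper-right (expanding) entry of $r_{-\theta}$, leaving a lower-triangular residue that is contracted, not expanded, by conjugation with $g_t$ for $t\ge 0$.
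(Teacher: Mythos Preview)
Your proof is correct and essentially identical to the paper's: both define the lower-triangular matrix $x=M_\theta=h_{-\tan\theta}r_{-\theta}=\begin{pmatrix}1/\cos\theta & 0\\ -\sin\theta & \cos\theta\end{pmatrix}$ and take $\Omega=\{g_t x g_{-t}:t\ge 0\}$. You carry out the conjugation explicitly and write down the resulting compact set, whereas the paper just notes that $g_t x g_{-t}$ converges as $t\to\infty$, but the argument is the same.
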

\begin{proof}
Let
$$x = \left(\begin{matrix}1/\cos \theta & 0 \\ -\sin \theta & \cos
\theta \end{matrix} \right) \in G. 
$$
Then $g_t x g_{-t}$ converges in
$G$ as $t \to \infty$, and we set $\Omega = \{g_txg_{-t} : t \geq
0\}$. 
Since $xr_\theta = h_{-\tan \theta}$ and
$g_t h_{-\tan \theta} q = g_t x
r_\theta q = g_t x
g_{-t} g_t r_\theta q$, the claim follows.  
\end{proof}

\begin{proof}[Proof of Theorem \ref{cor: inheritance, lines}]
Let $(\A, \B)$ be positive, let $\UU$ be a neighborhood
of $(\A, \B)$ in $\R^d_+ \times \R^d$, let $\bq: \UU \to \HH$ as in
Theorem \ref{thm: sullivan2}, let $q = \pi \circ \bq$ where $\pi: \til
\HH \to \HH$ is the natural projection, and let $\vre_0>0$ so that
$\A(s)=\A+s\B \in \UU$ for all $s \in (-\vre_0, \vre_0)$. Making $\vre_0$
smaller if necessary, let $\gamma$
be a judicious curve for $q$ such that $\IE_\sigma(\A(s)) = \IE(h_sq,
\gamma)$ for all $s \in (-\vre_0, \vre_0)$. By Theorem \ref{thm:
sullivan2} and Proposition \ref{lem: relating 
lengths and disconts}, $\A(s)$ is of recurrence (resp. bounded) type
if and only if there is a compact subset $K \subset \HH$ such that
$\{t >0 : g_th_sq \in K\}$ is unbounded (resp., is equal to $(0,
\infty)$). The main result of \cite{KMS} is that for any $q$, for
Lebesgue-a.e. $\theta \in (-\pi, \pi)$ there is a compact $K
\subset 
\HH$ such that $\{t>0: g_tr_{\theta}q \in K\}$ is unbounded. Thus (a)
(with $\mu$ equal to Lebesgue measure)
follows via Proposition
\ref{lem: horocycles and circles}. For a general measure $\mu$, the
statement will follow from
Corollary \ref{cor: strengthening} below. 
Similarly (b) follows from \cite{with Dima}
for $\mu$ equal to 
Lebesgue measure, and from \cite{bad} for a general decaying Federer
measure, and (c) follows from \cite{Masur Duke}.
\end{proof}

\section{Quantitative nondivergence for horocycles}
\name{section: nondivergence}
In this section we will recall a quantitative nondivergence result for
the horocycle flow, which 
is a variant of results in \cite{with Yair}, and will be crucial for
us. The theorem was stated
without proof in \cite[Prop. 8.3]{bad}. At the end of the section we
will use it to obtain a strengthening 
of Theorem \ref{cor: inheritance, lines}(a).

Given positive constants $C, \alpha, D$, we say that a regular finite Borel measure $\mu$ on $\R$ is {\em
$(C, \alpha)$-decaying and $D$-Federer} if \equ{eq: decaying and
federer} holds for all $x \in \supp \, \mu$ and all $0 < \vre, r <1.$ 
For an interval
$J=B(x,r)$ and $c>0$ we write $cJ = B(x, cr)$. Let $\HH_1$ and $K_\vre$
be as in \S\ref{section: saddles}, and let $\til \HH_1 = \pi^{-1}(\HH_1)$.

\begin{thm}
\name{thm: nondivergence, general}
Given a stratum $\HH$ of translation surfaces\footnote{The 
result is also valid (with identical proof) in the more general setup
of quadratic differentials.}, there are positive
constants $C_1, \lambda, \rho_0$, such that for any 
%positive $C, \alpha, D$
%there is 
%a positive $C'$ such that the following holds for any
$(C,\alpha)$-decaying and $D$-Federer measure $\mu$ on
an interval $B\subset \R$, the following holds. Suppose $J \subset \R$
is an interval with $3J
\subset B$, 
$0<\rho
\leq \rho_0, \, \bq
\in \til{\HH}_1$, and suppose
\eq{eq: condn on J}{\forall \delta \in \LL_{\bq}, \ \sup_{s \in J}\,
\ell(\delta, h_s\bq) \geq \rho.}
Then for any $0<\vre<\rho$:
\begin{equation}
\label{eq - precise yet long}
\mu\left( \{s \in J: h_s \pi(\bq) \notin K_{\vre} \right \}) \leq
C' \left(\frac{\vre}{\rho}\right)^{\lambda \alpha}\mu(J),
\end{equation}
where $C' = C_1 2^\alpha C D$. 
\end{thm}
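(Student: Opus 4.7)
The plan is to adapt the quantitative nondivergence scheme of \cite{with Yair} (itself modeled on Kleinbock--Margulis) from Lebesgue measure to a general decaying Federer measure. The central ingredient is that for any saddle connection $\delta \in \LL_\bq$ with $\hol(\delta,\bq) = (x,y)$, the $G$-equivariance \equ{eq: G action} gives $\hol(\delta,h_s\bq) = (x+sy,\, y)$, so that $\ell(\delta,h_s\bq) = \max(|x+sy|,|y|)$ is piecewise polynomial in $s$ of degree at most one. Standard arguments then show that such a function is $(C_0,\alpha_0)$-good with respect to $\mu$, in the sense that for any subinterval $J' \subset J$ with $3J' \subset B$ and any $\eta > 0$,
$$\mu\bigl(\{s \in J' : \ell(\delta,h_s\bq) < \eta\}\bigr) \le C_0\left(\frac{\eta}{\sup_{s \in J'}\ell(\delta,h_s\bq)}\right)^{\alpha_0}\mu(J'),$$
with $C_0, \alpha_0$ depending only on $C,\alpha,D$.

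First I would observe that if a single $\delta \in \LL_\bq$ were responsible for escape from $K_\vre$ throughout $J$, combining this good property with the hypothesis \equ{eq: condn on J} (which provides some $s_0 \in J$ at which $\ell(\delta,h_{s_0}\bq) \ge \rho$) would immediately give the desired bound with $\lambda\alpha = \alpha_0$. The heart of the proof is handling the case where many saddle connections are simultaneously short on various subintervals of $J$. Following \cite{with Yair}, I would organize short saddle connections by the saturated sublattice $V \subset H_1(S\sm\Sigma;\Z)$ that their homology classes span, and associate to each $V$ a ``covolume''-type function $\psi_V(s)$ built multiplicatively from lengths of a basis of short saddle connections in $V$. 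These product functions are also good, with constants depending only on $C,\alpha,D$ and the rank of $V$, because each factor is good and products of boundedly many good functions remain good.

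The argument then proceeds by induction on the rank of $V$: at each scale one partitions $J$ according to which sublattice is currently ``critical,'' invokes the good property of $\psi_V$ on each piece, and uses the $D$-Federer hypothesis both to pass between the pieces and to enlarge from $J'$ back to $J$, producing the factor $2^\alpha CD$ in $C'$. A Masur--Smillie type finiteness statement ensures that only boundedly many sublattices are critical at any scale, and the hypothesis \equ{eq: condn on J} prevents the induction from degenerating at the top level, since otherwise some $\delta$ would force $\psi_{\langle \delta\rangle}$ to remain $< \rho$ throughout $J$. The main obstacle will be combinatorial rather than measure-theoretic: the $(C_0,\alpha_0)$-good property for Federer measures is a routine extension of the Lebesgue case, but one must verify that the inductive scheme of \cite{with Yair} survives the replacement of Lebesgue measure by $\mu$, carefully tracking how the constants combine at each step to yield the final exponent $\lambda\alpha$ and the constant $C_1 2^\alpha CD$ in \eqref{eq - precise yet long}.
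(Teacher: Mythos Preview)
Your proposal is correct and follows essentially the same route as the paper: show that each length function $\ell_\delta(s)=\max(|x+sy|,|y|)$ is $(C_0,\alpha_0)$-good with respect to $\mu$, then feed this into the inductive nondivergence machinery of \cite{with Yair}. The paper's presentation is slightly more economical in one respect: rather than invoking a general good-function framework, it observes directly (citing \cite[Lemma~4.4]{with Yair}) that the sublevel set $\{s\in J:\ell_\delta(s)<\vre\}$ is a single subinterval of length at most $\tfrac{2\vre}{\rho}|J|$, so the passage from length to $\mu$-measure is a one-line application of the decaying and Federer hypotheses, after which \cite[Thm.~6.3 and Prop.~3.4]{with Yair} are invoked as a black box.
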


\begin{proof}
The proof is similar
to that of \cite[Thm.\ 6.10]{with Yair}, but with the assumption that
$\mu$ is Federer
substituting for condition (36) of that paper.
To avoid repetition we give the proof 
making reference to \cite{with Yair} when necessary.

Let $\lambda, \rho_0, C_1$ substitute for $\gamma, \rho_0, C$ as in
\cite[Proof of Thm. 6.3]{with Yair}. 
For an interval $J \subset  \R$, let $|J|$ denote its length.
For a function $f : \R \to \R_+$ and $\vre>0$, let
$$J_{f, \vre} = \{x \in J: f(x)<\vre\} \ \ \mathrm{and } \ \|f\|_J =
\sup_{x \in J} f(x).$$ 
For $\delta \in \LL_\bq$ let $\ell_{\delta}$ be the function
$\ell_{\delta}(s) = \ell(\delta, h_s\bq)$.  
Suppose, for $\bq \in \til \HH, \, \delta \in \LL_{\bq}$ and an interval
$J$, that $\|\ell_{\delta} \|_{J} \geq \rho.$ An elementary computation
(see \cite[Lemma
4.4]{with Yair}) shows that  $J_{\vre} = J_{\ell_{\delta}, \vre}$ is a
subinterval of $J$ and
\begin{equation}
\label{eq: ratio of lengths}
|J_{\vre}| \leq \frac{2 \vre}{\rho} |J|.
\end{equation}

Suppose that $\mu$ is
$(C,\alpha)$-decaying and $D$-Federer, and $\supp \, \mu \cap
J_{\vre} \neq \varnothing$.
Let $x \in \supp \, \mu \cap J_{\vre}$.
Note that $J_{\vre} \subset B(x, |J_{\vre}|)$ and $B(x, |J|) \subset
3J.$
One has
\[
\begin{split}
\mu(J_{\vre}) & \leq \mu(B(x, |J_{\vre}|)) \\
& \stackrel{\mathrm{decay, \ } \equ{eq: ratio of
lengths}}{\leq} C \left(\frac{2
\vre}{\rho} \right)^{\alpha} \mu(B(x, |J|)) \\
& \leq 2^{\alpha} C \left(\frac{\vre}{\rho} \right)^{\alpha} \mu(3J)
%\\ &
\leq C'' \left(\frac{\vre}{\rho} \right)^{\alpha} \mu(J),
\end{split}
\]
where $C'' = 2^\alpha CD$. 
This shows that if $J$ is an interval, $\bq \in \til \HH$, and $\delta \in
\mathcal{L}_{\bq}$ is such that $\|\ell_{\delta}\|_J \geq \rho$, then for
any $0<\vre <\rho$,
\begin{equation*}
%\label{eq: for sparse cover}
\frac{\mu(J_{\ell_{\delta}, \vre})}{\mu(J)} \leq C''
\left( \frac{\vre}{\rho} \right)^{\alpha}.
\end{equation*}

Now to obtain \equ{eq - precise yet long}, define $F(x)=C''x^{\alpha}$
and repeat the proof of \cite[Theorem
6.3]{with Yair}, but using $\mu$ instead of Lebesgue measure on $\R$
and using
\cite[Prop. 3.4]{with Yair} in place of \cite[Prop. 3.2]{with
Yair}. 
\end{proof}

\begin{cor}
\name{cor: return to compacts}
For any stratum $\HH$ of translation surfaces and any $C, \alpha, D$
there is a compact $K \subset \HH_1$ such that for any $q \in \HH_1$, any
unbounded $\mathcal{T} \subset \R_+$ and
any $(C, \alpha)$-decaying and $D$-Federer measure $\mu$ on an interval
$J \subset \R$, for $\mu$-a.e. $s \in J$ there is a sequence $t_n \to
\infty, \, t_n \in \mathcal{T}$ such that $g_{t_n}h_sq \in K$. 
\end{cor}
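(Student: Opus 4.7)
The plan is to reduce the corollary to a single application of Theorem \ref{thm: nondivergence, general} at well-chosen times, combined with a density-point argument. I would first choose $\rho := \min(1,\rho_0)$, with $\rho_0$ from that theorem, and then pick $\vre > 0$ so small that $\eta := C'(\vre/\rho)^{\lambda\alpha} < 1/2$; the claim is that $K := K_\vre$ then works for every $q$. Writing $B_t := \{s \in J : g_t h_s q \notin K\}$, $D_N := \bigcap_{t \in \mathcal{T} \cap [N,\infty)} B_t$, and $D := \bigcup_N D_N$ (after replacing $\mathcal{T}$ with a countable cofinal subset to avoid any measurability worry), I want to show $\mu(D)=0$ by contradiction.

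The key step will be to apply Theorem \ref{thm: nondivergence, general} to $\bq_t := g_t \hat q$ on the rescaled horocycle interval $e^t J'$, where $\hat q$ is a lift of $q$ and $J' = B(s_0,r)$ is a small subinterval of $J$. Using the commutation $g_t h_s = h_{se^t} g_t$ together with the fact that being $(C,\alpha)$-decaying and $D$-Federer is invariant under affine rescaling, this will translate to a bound on $\mu(B_t \cap J')$. The hypothesis \equ{eq: condn on J} requires $\sup_{u \in e^t J'} \ell(\delta, h_u \bq_t) \ge \rho$ for every saddle connection $\delta$, and with $(x_0,y_0) := \hol(\delta,\hat q)$ a direct computation identifies this sup as $\max(e^{t/2} M_\delta,\, e^{-t/2}|y_0|)$, where $M_\delta := \sup_{s \in J'}|x_0 + s y_0|$.

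The hard part will be producing a uniform positive lower bound on $\inf_\delta M_\delta$. My plan is to use $M_\delta \ge |x_0 + s_0 y_0|$ (the value at the center) and $M_\delta \ge r|y_0|$ (half the variation across $J'$). Then if $M_\delta < r\phi(h_{s_0}q)/2$ and $r \le 1$, both coordinates of $\hol(\delta, h_{s_0}\hat q) = (x_0 + s_0 y_0,\, y_0)$ are strictly less than $\phi(h_{s_0}q)$, contradicting the definition of $\phi$ at $h_{s_0} q$. This yields $m_0 := r\phi(h_{s_0}q)/2 > 0$ as a lower bound for $\inf_\delta M_\delta$, so the hypothesis of the nondivergence theorem holds with the chosen fixed $\rho$ as soon as $t \ge T_0 := 2\log(\rho/m_0)$, giving $\mu(B_t \cap J') \le \eta\,\mu(J')$ for all such $t$.

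To conclude, assuming $\mu(D) > 0$ forces $\mu(D_N) > 0$ for some $N$, and by the Lebesgue differentiation theorem for doubling (Federer) measures, $\mu$-a.e.\ $s_0 \in D_N$ is a density point. I would choose such an $s_0$ in the interior of $J$ and then take $r$ so small that $3J' \subset J$ and $\mu(D_N \cap J')/\mu(J') > 1/2$. Picking any $t \in \mathcal{T} \cap [\max(N,T_0),\infty)$ (nonempty by unboundedness of $\mathcal{T}$) and using that $D_N \subseteq B_t$ for such $t$ gives $\mu(D_N \cap J') \le \eta\,\mu(J') < \tfrac{1}{2}\mu(J')$, contradicting the density estimate. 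Hence $\mu(D) = 0$, and the corollary follows.
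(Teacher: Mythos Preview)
Your proposal is correct and follows essentially the same route as the paper's proof: choose $\vre$ so the nondivergence bound is strictly less than $1$, assume the bad set has positive measure, pass to a density interval, and verify hypothesis \equ{eq: condn on J} for all large $t$ by bounding $\sup_{s \in J'} |x_0 + s y_0|$ from below via the shortest saddle connection, then derive a contradiction from Theorem \ref{thm: nondivergence, general}. The only cosmetic difference is that you anchor the lower bound at $\phi(h_{s_0}q)$ while the paper uses $\phi(q)$ directly; both work equally well.
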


\begin{proof}
Given $C, \alpha, D$, let $\lambda, \rho_0, C'$ be as in Theorem
\ref{thm: nondivergence, 
general}. Let $\vre$ be small enough so that 
$$C' \left (\frac{\vre}{\rho_0} \right)^{\lambda \alpha} <1,
$$
and let $K=K_{\vre}$. Suppose to the contrary that for some 
$(C,\alpha)$-decaying and $D$-Federer measure $\mu$ on some interval
$J_0$ we have  
$$\mu(A) > 0, \ \ \mathrm{where} \ A=\{s \in J_0 :
\exists t_0 \, \forall t \in \mathcal{T} \cap (t_0, \infty), \, g_th_sq \notin K\}.$$
Then there is $A_0 \subset A$ and $t_0>0$ such that $\mu(A_0)>0$ and
\eq{eq: property A0}{
s \in A_0, \ t \in \mathcal{T} \cap (t_0, \infty) \ \implies \ g_th_sq \notin K.}
By a general density theorem, see e.g. \cite[Cor. 2.14]{Mattila},
there is an interval $J$ with $3J \subset J_0$ such that 
\eq{eq: from density}{
\frac{\mu(A_0 \cap J)}{\mu(J)} > C' \left (\frac{\vre}{\rho_0} \right)^{\lambda 
\alpha}.}
We claim that by taking $t > t_0$ sufficiently large 
we can assume that for all $\delta \in \LL_{\bq}$ there is $s \in J$
such that 
$\ell(\delta, g_th_s\bq) 
\geq \rho_0.$ 
This will guarantee that \equ{eq: condn on J} holds for the horocycle
$s \mapsto g_th_sq = h_{e^ts}g_tq$, and conclude the proof since \equ{eq:
property A0} and \equ{eq: from 
density} contradict \equ{eq - precise yet long} (with $g_{t}q$ in
place of $q$).

It remains to prove the claim. Let $\zeta = \phi(q)$
so that for any $\delta \in \LL_{\bq}$, 
$$\ell(\delta, \bq) = \max \left
\{|x(\delta, \bq)|, |y(\delta, \bq)| \right\}
\geq \zeta.$$
If $|x(\delta, \bq )| \geq \zeta$, then 
$ |x(\delta, g_t\bq)| = e^{t/2}
|x(\delta, \bq)| \geq \zeta e^{t/2},$ and if 
$|y(\delta, \bq)| \geq \zeta$ then the function 
$$s \mapsto x(\delta, g_t h_s\bq) = e^{t/2}\left(x(\delta,
\bq)+sy(\delta, \bq)  \right)$$
has slope $|e^{t/2}y(\delta, \bq)| \geq e^{t/2} \zeta$, hence 
$\sup_{s \in J} 
|x(\delta, g_t h_s \bq)| \geq \zeta e^{t/2} 
\frac{|J|}{2}.$
Thus the claim holds when $\zeta e^{t/2} \geq  \max \left\{\rho_0, \rho_0
\frac{|J|}{2} \right \}.$ 
\end{proof}

This yields a strengthening of Theorem \ref{cor:
inheritance, lines}(a). 
\begin{cor}
\name{cor: strengthening}
Suppose $(\A, \B)$ is positive, and write $\A(s) = \A+s\B$. There is
$\vre_0>0$ such that 
given $C, \alpha, D$ there is $\zeta>0$ such that if $\mu$ is $(C,
\alpha)$-decaying and $D$-Federer, and $\supp \, \mu \subset (-\vre_0,
\vre_0)$, then for $\mu$-almost every $s$, $\limsup n
\vre_n(\A(s)) \geq \zeta.$

\end{cor}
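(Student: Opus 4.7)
The plan is to combine the lifting of the line $s \mapsto \A+s\B$ to a horocycle path via Theorem \ref{thm: sullivan2}, the uniform nondivergence of Corollary \ref{cor: return to compacts}, and the contrapositive direction of Proposition \ref{lem: relating lengths and disconts}. First I would apply Theorem \ref{thm: sullivan2} to $(\A,\B)$ to obtain a neighborhood $\UU \ni (\A,\B)$ and a local section $\bq : \UU \to \til \HH$; choose $\vre_0 > 0$ so that $(\A+s\B,\B) \in \UU$ for all $|s| < \vre_0$, normalize so $\bq(\A,\B) \in \til\HH_1$, and set $q = \pi(\bq(\A,\B))$. By Corollary \ref{cor: same gamma} a single judicious curve $\gamma$ represents all the interval exchanges $\IE_\sigma(\A(s))$ as return maps $\IE(h_s q, \gamma)$ with uniform bounds on return times and length, so the constants $\kappa, c_1, c_2, n_0$ appearing in Proposition \ref{lem: relating lengths and disconts} can be taken uniform in $s \in (-\vre_0, \vre_0)$.

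Next I would invoke Corollary \ref{cor: return to compacts} with the given constants $C, \alpha, D$ to produce a compact set $K \subset \HH_1$ with the property that for every $(C,\alpha)$-decaying and $D$-Federer measure $\mu$ supported in $(-\vre_0, \vre_0)$, for $\mu$-a.e. $s$ there exists an unbounded sequence $t_k \to \infty$ with $g_{t_k} h_s q \in K$. Because $K$ is compact and $\phi$ is positive on $\HH_1$ (in fact $\phi$ is continuous when restricted to a compact subset of $\HH_1$, as only finitely many saddle connections can be short there), we obtain $\rho := \inf_{q' \in K} \phi(q') > 0$. Set $\zeta := (\rho/(3\kappa))^2$; this will be the uniform lower bound claimed by the corollary, and it depends only on $C, \alpha, D$ and the stratum (equivalently only on $C, \alpha, D$ and $\sigma$).

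Finally, I would use the contrapositive form of \equ{eq: first one}: \emph{if} $\phi(g_t h_s q) > \kappa \sqrt{\zeta}$ at the specific time $t = 2\log n + \log(2c_2/\zeta)$, \emph{then} $n\vre_n(\IE_\sigma(\A(s))) > \zeta$. For each $t_k$ furnished by the recurrence statement, set $n_k := \lfloor e^{t_k/2}\sqrt{\zeta/(2c_2)}\rfloor$ and $t_k^\ast := 2\log n_k + \log(2c_2/\zeta)$. Since the discrete values $t_n^\ast - t_{n-1}^\ast \sim 2/n \to 0$, we have $|t_k - t_k^\ast| \to 0$; continuity of $g_t$ together with continuity of $\phi$ on a compact neighborhood of $K$ then yields $\phi(g_{t_k^\ast} h_s q) \ge \rho/2 > \kappa\sqrt{\zeta}$ for all large $k$. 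The contrapositive of \equ{eq: first one} then gives $n_k \vre_{n_k}(\A(s)) \ge \zeta$, and since $n_k \to \infty$ we conclude $\limsup_n n\vre_n(\A(s)) \ge \zeta$. The main technical obstacle is exactly this alignment step, i.e., passing from the continuous recurrence times $t_k$ to the discrete times $t_n^\ast$ required by Proposition \ref{lem: relating lengths and disconts}; it is handled by the vanishing of the gaps in $\{t_n^\ast\}$ together with the uniform continuity of $\phi \circ g_t$ near the compact set $K$.
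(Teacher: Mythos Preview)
Your plan is correct and follows the same route as the paper's (very terse) proof, which simply says to repeat the argument for Theorem \ref{cor: inheritance, lines} using Corollary \ref{cor: return to compacts} and Proposition \ref{lem: relating lengths and disconts}. One simplification: the alignment step you flag as the main technical obstacle is unnecessary, because Corollary \ref{cor: return to compacts} lets you choose the unbounded set $\mathcal{T} \subset \R_+$ freely. Take $\mathcal{T} = \{t_n^\ast : n \ge n_0\}$ where $e^{t_n^\ast/2} = n\sqrt{2c_2/\zeta}$; then the recurrence times furnished by the corollary are already of the exact form required by the contrapositive of \equ{eq: first one}, and no continuity argument for $\phi$ is needed.
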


\begin{proof}
Repeat the proof of Theorem \ref{cor: inheritance, lines}, using
Corollary \ref{cor: return to
compacts} and Proposition 
\ref{lem: relating lengths and disconts}
instead of \cite{KMS}. 
\end{proof}

\section{Mahler's question for curves}\name{section: mahler curves}
In this section we prove Theorems \ref{thm:
mahler curve} and \ref{cor: ue on curves} by deriving
them from a 
stronger statement.

\begin{thm}
\name{thm: curves}
Let $J \subset \R$ be a compact interval, let $\beta: J \to \R^d_+$ be a
$C^2$ curve, let $\mu$ be a decaying Federer measure on
$J$, and suppose that for every $s_1, s_2 \in J$, $(\beta(s_1),
\beta'(s_2))$ is a positive pair. Then there
is $\zeta>0$ such that for $\mu$-a.e. $s \in J, $ $
\limsup_{n \to \infty} n\vre_n(\beta(s)) \geq
\zeta$. 
\end{thm}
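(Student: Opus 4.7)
The plan is to adapt the strategy of Corollary \ref{cor: strengthening} from lines to curves, approximating $\beta$ on intervals of size of order $e^{-t/4}$ by its tangent lines and then applying the quantitative nondivergence estimate of Theorem \ref{thm: nondivergence, general} on each such interval. By Corollary \ref{cor: realizable open} and compactness of $J$, the positivity hypothesis holds on a uniform neighborhood of the image of $s\mapsto(\beta(s),\beta'(s))$ in $T\R^d_+$, so Theorem \ref{thm: sullivan2} together with Corollary \ref{cor: same gamma} furnish, after possibly subdividing $J$ into finitely many pieces, continuous lifts $\bq_{s_0}\in\til\HH$ with $h_u\bq_{s_0}=\bq(\beta(s_0)+u\beta'(s_0),\beta'(s_0))$ for small $u$, all sharing a single judicious transversal $\gamma$ of uniformly bounded return times. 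Write $\bq(s)$ for the lift of $\beta(s)$ so obtained.

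The $C^2$ hypothesis gives $\beta(s)-\beta(s_0)-(s-s_0)\beta'(s_0)=O((s-s_0)^2)$ uniformly for $s,s_0\in J$. Since $g_t$ scales horizontal $\hol$-coordinates by $e^{t/2}$ and $g_t h_u=h_{e^tu}g_t$, this yields
\[
\hol(g_t\bq(s))-\hol\bigl(h_{e^t(s-s_0)}g_t\bq_{s_0}\bigr)=O\bigl(e^{t/2}(s-s_0)^2\bigr),
\]
which is bounded in the $\hol$-metric as soon as $|s-s_0|\le c e^{-t/4}$ for a uniform $c>0$. For each large $t$ I cover $J$ by intervals $J_i=B(s_i,c e^{-t/4})$ of bounded overlap. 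The pushforward of $\mu|_{J_i}$ under $s\mapsto e^t(s-s_i)$ retains the $(C,\alpha)$-decaying and $D$-Federer constants of $\mu$; applying Theorem \ref{thm: nondivergence, general} to the horocycle through $g_t\bq_{s_i}$ and summing over $i$ gives, for some fixed $\vre_0>0$ and $\eta<1$,
\[
\mu\bigl(\{s\in J : g_t\bq(s)\notin K_{\vre_0}\}\bigr)\le \eta\,\mu(J),
\]
and the analogous bound on any fixed sub-interval of $J$. Here $K_{\vre_0}$ has been slightly enlarged to absorb the bounded approximation error, and condition \equ{eq: condn on J} is verified for $t$ sufficiently large exactly as in the proof of Corollary \ref{cor: return to compacts}.

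The remainder of the argument mirrors the contradiction in the proof of Corollary \ref{cor: return to compacts}: if the set $A$ of $s\in J$ whose orbit $\{g_t\bq(s)\}$ eventually leaves $K_{\vre_0}$ had positive $\mu$-measure, then at a Lebesgue density point of $A$ one could find an arbitrarily small sub-interval on which the density of $A$ exceeds $\eta$, contradicting the displayed bound applied on that sub-interval at any sufficiently large $t$. Hence for $\mu$-a.e. $s\in J$ there is an unbounded sequence $t_m$ with $g_{t_m}\bq(s)\in K_{\vre_0}$, and Proposition \ref{lem: relating lengths and disconts} (contrapositive of the first statement) converts this into a sequence $n_m\to\infty$ with $n_m\vre_{n_m}(\beta(s))\ge \zeta$, for a constant $\zeta>0$ depending only on $\vre_0$ and the transversal data attached to $\gamma$.

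The main obstacle will be coupling the tangent-line error with the nondivergence bound across the whole cover: the $O(e^{t/2}(s-s_0)^2)$ error must be absorbed into a uniform enlargement of $K_{\vre_0}$, and condition \equ{eq: condn on J} must hold on every $J_i$ with a single $\rho$. Both issues are resolved by exploiting the compactness of $J$, the uniform $C^2$ bound on $\beta$, the uniform transversal supplied by Corollary \ref{cor: same gamma}, and the observation used in Corollary \ref{cor: return to compacts} that $g_t$ for large $t$ expands any a priori short saddle connection in at least one of its coordinates, uniformly in the parameter space.
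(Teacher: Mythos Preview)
Your global plan—lift via Theorem \ref{thm: sullivan2} and Corollary \ref{cor: same gamma}, approximate $\beta$ by tangent lines, apply horocycle nondivergence on the linear pieces, and finish with a density-point argument and Proposition \ref{lem: relating lengths and disconts}—matches the paper's. The gap is the comparison step. You argue that $\hol(g_t\bq(s))$ and $\hol\bigl(h_{e^t(s-s_0)}g_t\bq_{s_0}\bigr)$ differ by $O(1)$ in the fixed $\R^d$-basis for $H^1(S,\Sigma;\R^2)$, and then assert that $K_{\vre_0}$ can be ``slightly enlarged'' to absorb this. But membership in $K_\vre$ is decided by the shortest saddle connection, and a saddle connection $\delta$ that is short on $g_t\bq(s)$ is, as a class in $H_1(S,\Sigma)$, represented on $\bq(s)$ by a nearly vertical segment of length $\sim e^{t/2}$, crossing the transversal $\gamma$ about $e^{t/2}$ times. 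Evaluating the $\hol$-difference on such $\delta$ gives, in the $x$-coordinate on $\bq(s)$, an error of order $e^{t/2}\cdot\|\A^{(s_0)}(s)-\beta(s)\|\sim e^{t/2}\cdot e^{-t/2}=O(1)$; after the $e^{t/2}$-dilation by $g_t$ this becomes $O(e^{t/2})$. So $\delta$ need be neither short nor a saddle connection on $g_t h_{s-s_0}\bq_{s_0}$, and bounded $\hol$-distance gives you no control on $\phi$.

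The paper avoids this by making the comparison at the interval-exchange level. It takes a density point $s_0$, works on $J_t=B(s_0,C_2e^{-t/2})$ (not $e^{-t/4}$), and proves the implication
\[
n_2\vre_{n_2}(\beta(s))<\zeta \ \Longrightarrow\ n_1\vre_{n_1}\bigl(\A^{(s_0)}(s)\bigr)<\zeta_1
\]
for related $n_1,n_2\sim e^{t/2}$ by a direct itinerary argument: while $\IE_\sigma(\beta(s))$ and $\IE_\sigma(\A^{(s_0)}(s))$ share itineraries, their $k$-th iterates of corresponding discontinuities differ by at most $kd^2\|\beta(s)-\A^{(s_0)}(s)\|\le kd^2\til C\,e^{-t}$, which for $k\le n_2\sim e^{t/2}$ is $O(e^{-t/2})$, the same scale as $\zeta_1/n_1$. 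The first implication of Proposition \ref{lem: relating lengths and disconts} then turns $n_1\vre_{n_1}(\A^{(s_0)}(s))<\zeta_1$ into $g_t\til q(s)\notin K_\vre$ for the horocycle $\til q(s)=h_{s-s_0}q(s_0)$, contradicting the nondivergence bound at the density point. You can repair your outline by replacing the surface-level $\hol$-comparison with this combinatorial one.
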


\begin{proof}[Derivation of Theorem \ref{cor: ue on curves} from
Theorem \ref{thm: curves}]
If Theorem \ref{cor: ue on curves} is false then there is $A \subset I$
with $\mu(A)>0$ such that for all $s \in A$, $\beta(s)$ is not of
recurrence type but 
$(\beta(s), \beta'(s))$ is positive. Let $s_0 \in A \cap \supp \, \mu$
so that $\mu(A \cap J) > 0$ for any open interval $J$
containing $s_0$. Since the set
of positive pairs is open (Corollary \ref{cor: realizable open}), there is
an open $J$ containing $s_0$ such that $(\beta(s_1), \beta'(s_2))$ is
positive for every $s_1, s_2 \in J$, so Theorem \ref{thm: curves}
implies that $\beta(s)$ is of recurrence type for almost every $s \in
J$, a contradiction.
\end{proof}

\begin{proof}[Proof of Theorem \ref{thm: mahler curve}]
Let $\A(x)$ be as in \equ{eq: mahler curve} and let $\|\cdot \|_1$ be
the 1-norm on $\R^d$. Since unique ergodicity is unaffected by
dilations, it is 
enough to verify the conditions of Theorem \ref{cor: ue on curves} for
the permutation $\sigma(i)=d+1-i$, a
decaying Federer measure $\mu$, and for 
$$\beta(s) = \frac{\A(s)}{\|\A(s)\|_1} = \frac{1}{s+\cdots +s^d}
\left(s, s^2, \ldots, s^d\right).$$ 
 For any connection $(i,j,m)$ the
set  
$\left \{\A \in \Delta: (i,j,m) \in \LL_\A \right\}$
is a proper affine subspace of $\R^d_+$ transversal to $\{(x_1,
\ldots, x_d): \sum x_i=1\}$, and since $\beta(s)$ is
analytic and not contained in any such affine subspace, the set 
$\{s \in I : \beta(s) \mathrm{\ has \ connections}\}$
is countable, so $\beta(s)$ is without connections for
$\mu$-a.e. $s$. 

Letting $R=R(s) = s+\cdots+s^d$, we have 
$$\beta'(s) = \frac{1}{R^2} \left(\gamma_1(s), \ldots, \gamma_d(s) 
\right), \ \ \ \mathrm{where} \ \ \gamma_i(s) = \sum_{\ell = i}^{i+d-1}
\left(2i-\ell-1 \right) s^{\ell}.
$$
Then for
$j=1, \ldots, d-1$ and $\ell =1, \ldots, j+d-1$, setting $a= \max\{1,
\ell+1-d\},\, b=\min\{j, \ell\}$ we find:
\[
\begin{split}
R^2y_j & = \sum_{i=1}^j \gamma_i(s) =\sum_{\ell=1}^{j+d-1} \left(\sum_{i=a}^b (2i-\ell-1) \right)
s^\ell \\ 
& = \sum_{\ell =1}^{j+d-1}\left[ b(b+1)-a(a-1) -
(b-a+1)(\ell+1)\right] s^{\ell}.
\end{split} 
\]
Considering separately the 3 cases $1 \leq \ell \leq j,\ j < \ell \leq
d, \ d<\ell$ one sees that in every
case $y_j < 0$. Our choice of $\sigma$ insures that for $j = 1,
\ldots, d-1$,
$y'_j = -y_{d-j}>0 $. This implies via \equ{eq: defn L} that $L<0$ on
$I$, thus 
%$-\beta'(s) \in \mathcal{C}$ as in \equ{defn of C}
%for all $s$. Hence, 
for all $s$ for which $\beta(s)$ is without
connections, $\left(\beta(s), -\beta'(s)\right)$ is positive. Define
$\hat{\beta}(s)=\beta(-s)$, so that $\left(\hat{\beta}(s), \hat{\beta}'(s)\right)
= \left(\beta(-s), -\beta'(-s)\right)$ is positive for a.e. $s<0$. Thus Theorem
\ref{cor: ue on curves} applies to $\hat{\beta}$, proving the claim. 
\end{proof}

\begin{proof}[Proof of Theorem \ref{thm: curves}]
Let $\HH$ be the stratum corresponding to $\sigma$. For a
$(C, \alpha)$-decaying and $D$-Federer measure $\mu$, let $C', \rho_0, \lambda$ be the
constants as in Theorem \ref{thm: nondivergence, general}. Choose  
$\vre>0$ small enough so that 
\eq{eq: choice of epsilon}{
B = C' \left(\frac{\vre}{\rho_0}
\right)^{\lambda \alpha} <1,}
and let $K=K_{\vre}.$

By making $J$ smaller if necessary, we can assume that for all $s_1,
s_2 \in J$, there is a translation surface $q(s_1,s_2) = \pi\circ \bq(\beta(s_1),
\beta'(s_2))$ corresponding to the positive pair $(\beta(s_1),  
\beta'(s_2))$ via Theorem \ref{thm: sullivan2}. That is
$Q=\{q(s_1, s_2): s_i \in J\}$ is a bounded subset of $\HH$ and
$q(s_1, s_2)$ depends continuously on $s_1, s_2$. By appealing to
Corollary \ref{cor: same gamma}, we can also assume that there is a
fixed curve $\gamma$ so that $\IE_\sigma(\beta(s_1)) = \IE(q(s_1,
s_2), \gamma),$ for $s_1, s_2 \in J$. Define $q(s) = q(s,s)$. 
By rescaling we may assume with no loss of
generality that $q(s) \in \HH_1$ for all $s$, and by
making $K$ larger let us assume that $Q \subset K.$ 
By continuity, 
the return times to $\gamma$ along vertical leaves for
$q(s_1, s_2)$
are uniformly bounded from above and below, and the length of $\gamma$
with respect to the flat structure given by $q(s_1, s_2)$ is uniformly
bounded. 

We claim that there is $C_1$,
depending only on $Q$, such that for any interval $J_0 \subset \R$
with $0 \in J_0$,
any $t>0$, any $q \in Q$, any $\bq \in
\pi^{-1}(q)$ and any $\delta \in
\LL_{\bq}$, we have
$$\sup_{s \in J_0} \ell(\delta, g_t h_{s} \bq) \geq C_1 |J_0| e^{t/2}.$$
Here $|J_0|$ is the length of $J_0$. 

Indeed, let 
$\theta = \inf\{\phi(q): q  \in Q\},
$
which is a positive number since $Q$ is bounded. 
Let $C_1 =  \min\left\{\frac{\theta}{|J_0|},
\frac{\theta}{2} \right\}$, and let $q \in Q, \, \bq \in \pi^{-1}(q),
\, \delta \in \LL_{\bq}$. Then 
$\max \left
\{|x(\delta, \bq)|, |y(\delta, \bq)| \right\} \geq \theta.$
Suppose
first that $|x(\delta, \bq )| \geq \theta$, then 
$$\sup_{s \in J_0}\ell(\delta, g_th_s\bq) \geq |x(\delta, g_t\bq)| = e^{t/2}
|x(\delta, \bq)| \geq \theta e^{t/2} \geq C_1 |J_0| e^{t/2}.$$
Now if $|y(\delta, \bq)| \geq \theta$ then the function 
$$s \mapsto x(\delta, g_t h_s\bq) = e^{t/2}\left(x(\delta,
\bq)+sy(\delta, \bq)  \right)$$
has slope $|e^{t/2}y(\delta, \bq)| \geq e^{t/2} \theta$, hence 
$$\sup_{s \in J_0} \ell(\delta, g_th_s\bq) \geq \sup_{s \in J_0}
|x(\delta, g_t h_s \bq)| \geq e^{t/2} 
\theta |J_0|/2 \geq C_1 |J_0| e^{t/2}.$$
This proves the claim. 

\medskip 

%Let $C_2 = \rho_0/C_1.$ For each $s_1, s_2 \in J$ let $\gamma =
%\gamma^{(s_1, s_2)}, q =q(s_1, s_2), \IE = \IE(q, \gamma)$ and choose a
%basis $\alpha_1, \ldots, \alpha_r$ of $H_1(S, \Sigma; \R)$ such that 
%\eq{eq: choice of basis}{
%x(\alpha_i, \bq) = \beta_i(s_1),
%}
% where $\bq \in \pi^{-1}(q)$ and  
%$\beta(s)=(\beta_1(s), \ldots, \beta_d(s))$. Note that the $\alpha_i$
%and $\bq$ depend on the parameters $s_1, s_2$ but by compactness we
%may make these choices so that only finitely
%many bases are involved. Thus by replacing $J$ with a subinterval, we
%may fix once and for all a basis $\alpha_1, \ldots, \alpha_r$ of
%$H_1(S, \Sigma ; \R)$ and a continuous choice of $\bq(s_1, s_2) \in
%\pi^{-1}(q(s_1, s_2))$ for which \equ{eq: choice of basis} holds. 

For each $s_0, s \in J$ let $\A^{(s_0)}(s) = 
\beta(s_0)+\beta'(s_0)(s-s_0)$ be the linear approximation to $\beta$
at $s_0$.
Using the fact that $\beta$ is a $C^2$-map, there is $\til C$ such
that 
\eq{eq: small distance}{\max_{s \in J_0} \| \beta(s) - \A^{(s_0)}(s)
\| 
< \til C | J_0|^2
}
whenever $J_0 \subset
J$ is a subinterval 
centered at $s_0$. 

Let $\kappa$ and $c_2$ be as in Proposition \ref{lem: relating
lengths and disconts}, let $\vre$ be as chosen in \equ{eq: choice of
epsilon}, let
$$
C_2 = \frac{\rho_0}{2C_1}, \ \ \ 
\zeta_1 <\left(\frac{\vre}{\kappa} \right)^2 \ \ \ \mathrm{and} \ \ 
\zeta = \zeta_1 \cdot \frac{c_2}{d^2\til C}.
$$

If the theorem is false then $\mu(A)>0$, where 
$$A = \{s \in J: \limsup n\vre_n(\beta(s)) < \zeta \}.$$
Moreover there is $N$ and
$A_0 \subset A$ such that $\mu(A_0)>0$ and 
\eq{eq: uniformly small}{
n \geq N, \, s \in A_0 \ \implies \ n\vre_n(\beta(s)) < \zeta.
}
Using
\cite[Cor. 2.14]{Mattila} let $s_0 \in A_0$ be a density point, so that
for any sufficiently small interval $J_0$ centered at $s_0$ we have 
\eq{eq: density argument}{
\mu(A_0 \cap J_0) >B \mu(J_0),
}
where $B$ is as in \equ{eq: choice of epsilon}. 

For $t>0$ we will write 
$$c(t) = C_2 e^{-t/2} \ \  \ \mathrm{and}\ \ \ J_{t} =
B(s_0,c(t)).$$
Let $\A(s) = \A^{(s_0)}(s)$ and let $\til q(s)=h_{s-s_0}q(s_0)$, which is the surface
$\pi \circ \bq (\A(s), \beta'(s_0))$. 
%In view of 
%\equ{eq: density argument}, we obtain that for all large enough $t$, 
%$$\mu\{s \in J_t: g_t \til q(s) \notin K\} \geq \mu(A_0 \cap J_t) \geq
%B \mu(J_t).
%$$
The trajectory $s
\mapsto g_t \til q(s) = g_t h_{s-s_0}q(s_0) = h_{e^ts} g_th_{-s_0}
q(s_0)$ is a horocycle path, which, by the claim and the choice of
$C_2$, satisfies \equ{eq: condn
on J} with $\rho = \rho_0$ and $J=J_t$ for all $t>0$. Therefore 
\eq{eq: measure estimate 2}{
\mu \left\{s \in J_t : g_t \til q(s) \notin K \right \} \leq B \mu (J_t).
}

Now for a large $t>0$ to be specified below, let 
\eq{eq: defn n2}{
 n_1 =
\sqrt{\frac{\zeta_1}{2c_2}}e^{t/2} \ \ \ \mathrm{and} \ \ n_2 = \frac{2c_2}{d^2
\til C} n_1.
}
By making $\til C$ larger we can assume that $n_2 < n_1$. 
For large enough $t$ we will have $n_2 > n_0$ (as in Proposition
\ref{lem: relating lengths and disconts}) and $n_2 > N$ (as in
\equ{eq: uniformly small}). 

%It follows from \equ{eq: eta1} that for
%all sufficiently large $t$ we have  
%\eq{eq: close}{
%$$
%\left\{s \in J_t: n\vre_{n}(\A(s)) < \zeta_1 \right \} \subset
%\left \{s \in J_t: g_tq_s
%\notin K \right \}.
%$$
%}
%In view of the claim above, using the change of variables 
%\eq{eq: change of vars}{
%$\bar{s}= \bar{s}(s) = e^t(s-s_0)$ and 
% \ \
%\mathrm{and} \ 
%$\til J=\{\bar{s}: s \in J_{s_0,t}\},$
%}
%we find that 
%\equ{eq: condn on J} holds for $\til J,
%\, g_tq(s_0)$ in place of $J, q$ respectively; thus by \equ{eq: measure estimate},
%\eq{eq: almost there}{
%\mu \left(\{s \in J_t: n\vre_{n}(\A(s)) < \zeta_1\} \right) < 
%m \mu(J_t).
%}

We now claim 
\eq{eq: remains to prove}{
s \in J_t, \ n_2 \vre_{n_2} (\beta(s))
< \zeta \ \implies \ n_1\vre_{n_1}(\A(s)) < \zeta_1.
}
Assuming this, note that by \equ{eq:
first one}, \equ{eq: defn n2} and the choice of $\zeta_1$, if $n_1
\vre_{n_1}(\A(s)) < \zeta_1$ then $g_t \til q(s) 
\notin K$. Combining \equ{eq: uniformly small} and \equ{eq: remains to
prove} we see that $A_0 \cap J_t \subset \{s \in J_t: g_t \til q(s)
\notin K\}$ for all large enough $t$. Combining this with \equ{eq:
density argument} we find a contradiction to \equ{eq: measure estimate
2}. 

\medskip

It remains to prove \equ{eq: remains to prove}. Let $r \leq n_2$, 
let $\IE= \IE_\sigma(\A(s)), \ \SSS= \IE_\sigma(\beta(s))$, let $x_i,
x_j$ be discontinuities of $\IE$ and let $x'_i, x'_j$ be the
corresponding discontinuities of $\SSS$. By choice of $\til C$ we have   
$$\| \beta(s) - \A(s)\| < \til C e^{-t},
$$
where $\| \cdot \|$ is the max norm on $\R^d$. 
Suppose first 
that $x_i$ and $x'_i$ have the same itinerary
under $\IE$ and $\SSS$ until the $r$th iteration; i.e. $\IE^kx_i$ is
in the $\ell$th interval of continuity of $\IE$ if and only if $\SSS^k
x'_i$ is in the $\ell$th interval of continuity of $\SSS$ for $k \leq
r$. Then one sees from \equ{eq: disc points2} and \equ{eq: defn iet}  that 
$$|\IE^r x_i - \SSS^r x'_i| \leq \sum_0^r d^2 \| \beta(s) - \A(s)\|.$$
Therefore, using \equ{eq: defn n2}, 
\[
\begin{split}
\left| x_i - \IE^rx_j - (x'_i - \SSS^r x'_j) \right | & \leq
\left| x_i - x'_i \right | + \left| \IE^rx_j - \SSS^r x'_j \right | \\
& \leq 2 \sum_1^r d^2 \| \beta(s) - \A(s) \| \\
& \leq 2 d^2 n_2 \til C e^{-t} = \frac{\zeta_1}{2n_1}.
\end{split}
\]
If $n_1\vre_{n_1}(\A(s)) \geq \zeta_1$ then by the triangle inequality
we find 
that $|x'_i - \SSS^rx'_j| < \zeta_1/2n_1>0$. In particular this shows
that for all $i$, $x_i$ and $x'_i$ do have the same itinerary under
$\IE$ and $\SSS$, and moreover
\[
\begin{split}
\vre_{n_2}( \beta(s)) & \geq \vre_{n_2}(\A(s)) - \frac{\zeta_1}{2n_1} \\
& \geq \vre_{n_1}(\A(s)) - \frac{\zeta_1}{2n_1} \\
&  > \frac{\zeta_1}{2n_1} = \zeta_1 \frac{n_2}{2n_1} \cdot \frac{1}{n_2} = \frac{\zeta}{n_2},
\end{split}
\]
as required. 
\end{proof}
\ignore{
\section{The fiber of the developing map}\name{section: fiber singleton}
It is 
well-known that the map  $\dev: \til \HH \to H^1(S, \Sigma;
\R^2)$ is not injective. For example, precomposing with a
homeomorphism which acts trivially on homology may change a marked
translation surface structure but does not change its image in $H^1(S, \Sigma;
\R^2)$; see \cite{McMullen American J} for more examples. However we haven
the following useful statement: 

\begin{thm}
\name{thm: fiber singleton}
Let $\til \HH$ be a stratum of marked translation surfaces of type
$(S, \Sigma)$. Fix a singular foliation $\FF$ on $(S, \Sigma)$, and let $\B \in
H^1(S, \Sigma; \R)$. Then there is at most one $\bq \in \til \HH$ with
horizontal foliation $\FF$, and vertical foliation representing $\B$.
\end{thm}

\begin{proof}
Suppose that $\bq_1$ and $\bq_2$ are two marked translation surfaces,
such that the horizontal measured foliation of both is $\FF$, and the
vertical measured foliations $\GG_1, \GG_2$ both represent $\B$. We
need to show that $\bq_1=\bq_2$. Let $\gamma$ be a special transverse
system to $\FF$; recall that it has one cylinder edge going across
each cylinder of periodic leaves for $\FF$, and its other segments
start at singularities and go into minimal components of $\FF$ without
crossing saddle connections in $\FF$. Make $\gamma$ smaller by
deleting segments, so it has only one segment going into each minimal
component. Since $\FF$ and $\GG_1$ are
transverse, we may take $\gamma$ to be contained in leaves of
$\GG_1$. 

We claim that, after making $\gamma$ smaller if necessary, we may
precompose $\bq_2$ with an isotopy rel 
$\Sigma$, in order to assume that $\gamma$ is also contained in leaves
of $\GG_2$, i.e. that $\GG_1$ and $\GG_2$ coincide along
$\gamma$. Indeed, if $\alpha$ is a sufficiently short arc in $\gamma$
going from a singularity $x$ into a minimal set $M$, we may isotope
along $\FF$ in $M$ 
until the leaf of $\GG_2$ starting at $x$ reaches $\alpha$. Since $M$
contains no other segments of $\gamma$ we may perform this operation
independently on each minimal component. Now suppose $\alpha$ is a
cylinder edge in $\gamma$, so that $\alpha$ connects two
singularities on opposite sides of a cylinder $P$. In particular
$\int_\alpha \GG_1 = \B(\alpha) = \int_\alpha \GG_2$, so we may
perform an isotopy inside $P$ until $\alpha$ is also contained in a
leaf of $\GG_2$, as claimed. 

Now consider the cell decomposition $\BB=\BB(\gamma)$, and let $\beta_i =
[\GG_i]$ be the 1-cocycle on $\BB$ obtained by integrating
$\GG_i$. For a transverse edge $e$ we have $\beta_1(e)=\beta_2(e)=0$
since $e$ is a leaf for both foliations. If $e$ is contained in a leaf
of $\FF$, we may take transverse edges $d$ and $f$ so that the path
$\delta$ starting along $d$ and continuing with $e$ and then $f$
connects two singularities 
($d$ or $f$ may be absent if $e$ begins or ends in $\Sigma$). 
That is, $\delta$ represents an
element of $H_1(S, \Sigma). $ Then, since
$\beta_i(d)=\beta_i(f)=0$ we have 
$\beta_i(e) = \beta_i(\delta) = \B(\delta).$
This implies that $\beta_1 = \beta_2$ on $\BB$. Recall that 
$\bq_i$ may be obtained explicitly by gluing together the rectangles
of $\BB$, where the geometry of the rectangles is given by $\FF$ and
$\beta_i$. Therefore $\bq_1 = \bq_2$. 
\end{proof}
}
\section{Real REL}
This section contains our results concerning the real REL
foliation, whose definition we now recall. Let $\bq$ be a 
marked translation surface of type $\br$, where $k$ (the number of
singularities) is at least 2. Recall that $\bq$ determines a 
cohomology class $\hol(\bq)$ in $H^1(S, \Sigma ; \R^2)$, where for a relative
cycle $\gamma \in H_1(S, \Sigma),$ the value $\bq$ takes on
$\gamma$ is $\hol(\gamma, \bq)$. Also recall that there are open sets
$\til \HH_\tau$ in $\til \HH(\br)$, corresponding to a given triangulation of
$(S, \Sigma)$, such that the map hol restricted to $\til \HH_\tau$ endows
$\til\HH$ with a linear manifold structure. Now recall the map Res as in 
\equ{eq: defn Res}, let $V_1$ be the first summand in the splitting \equ{eq:
splitting}, and let $W = V_1 \cap \ker \,\Res, $ so that $\dim W =
k-1,$ where $k = |\Sigma|$. The REL foliation is modeled on $\ker \,
\Res$, the real foliation is modeled on $V_1$, and the real REL
foliation is modeled on $W$. 
That is, a ball $\mathcal{U} \subset \til \HH_\tau$ provides a product
neighborhood for these foliations, where $\bq, \bq_1 \in \mathcal{U}$
belong to the same plaque for REL, real, or real REL, if
$\hol(\bq)-\hol(\bq_1)$ belongs respectively to $\ker \Res, V_1,$ or
$W$. 

Recall that $\HH$ is an orbifold and the orbifold cover $\pi: \til \HH
\to \HH$ is defined by taking a quotient by the $\Mod(S,
\Sigma)$-action. 
Since hol is equivariant with respect to the action of the group $\Mod(S,
\Sigma)$ on $\til \HH$ and $H^1(S, \Sigma; \R^2)$, and (by naturality
of the splitting \equ{eq:
  splitting} and the sequence \equ{eq: defn Res}) the subspaces $V_1, W,$ 
and $\ker \, \Res$ are $\Mod(S, \Sigma)$-invariant, the foliations
defined by these subspaces on $\til \HH$ descend naturally to $\HH$. 
More precisely leaves in $\til \HH$ descend to `orbifold leaves' on
$\HH$, i.e. leaves in $\til \HH$ map to immersed sub-orbifolds in
$\HH$. In order to avoid dealing with orbifold foliations, we 
pass to a finite cover $\hat{\HH}  
\to \HH$ which is a manifold, as explained in \S \ref{subsection: strata}. 

\begin{prop}
\name{prop: independent of marking}
The REL and real REL
leaves on both $\til \HH$ and $\hat \HH$ have a well-defined translation structure. 

\end{prop}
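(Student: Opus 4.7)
The plan is to build the translation structure on each leaf directly from the global affine map hol, and then verify that this structure is invariant under the deck group of the cover $\til\HH\to\hat\HH$.

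First I would fix a leaf $L$ of the REL (resp.\ real REL) foliation on $\til\HH$ and a basepoint $\bq_0\in L$. By definition of the foliation, the plaque of $L$ through any $\bq\in L$ lying in a chart domain $\til\HH_\tau$ is carried homeomorphically by hol onto an open subset of the affine subspace $\hol(\bq_0)+\ker\,\Res$ (resp.\ $\hol(\bq_0)+W$). Defining $\psi_{\bq_0}(\bq)=\hol(\bq)-\hol(\bq_0)$ yields a chart into the vector space $\ker\,\Res$ (resp.\ $W$). Because hol is a single globally defined map, the restrictions of hol to overlapping plaques, possibly arising from different triangulations $\tau$, agree; consequently two such charts with different basepoints $\bq_0,\bq_0'\in L$ differ on their overlap by the constant $\hol(\bq_0')-\hol(\bq_0)\in\ker\,\Res$ (resp.\ $\in W$). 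Since all transitions are translations, this atlas defines a translation structure on $L$ modeled on $\ker\,\Res$ (resp.\ $W$).

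Next I would show this structure descends to $\hat\HH$. Write $\hat\HH=\til\HH/\Gamma_0$ for a finite-index torsion-free subgroup $\Gamma_0\subset\Mod(S,\Sigma)$ acting freely, as in \S\ref{subsection: strata}. It suffices to show that every $\varphi\in\Gamma_0$ with $\varphi L=L$ acts on the chart $\psi_{\bq_0}$ by a translation in $\ker\,\Res$ (resp.\ $W$). The key observation is that $\Mod(S,\Sigma)$ acts trivially on $\ker\,\Res$: by convention every $\varphi\in\Mod(S,\Sigma)$ fixes $\Sigma$ pointwise, so the induced action on $H^0(\Sigma;\R^2)$ is trivial, and by naturality of the exact sequence \equ{eq: defn Res}, $\ker\,\Res$ is identified with the $\Mod(S,\Sigma)$-module $H^0(\Sigma;\R^2)/H^0(S;\R^2)$. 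The same triviality holds on the $\Mod(S,\Sigma)$-invariant subspace $W\subset\ker\,\Res$. Given this, equivariance of hol gives
\[
\psi_{\bq_0}(\varphi\bq)=\varphi_*\hol(\bq)-\hol(\bq_0)=\varphi_*\psi_{\bq_0}(\bq)+\varphi_*\hol(\bq_0)-\hol(\bq_0)=\psi_{\bq_0}(\bq)+c_\varphi,
\]
where $c_\varphi=\varphi_*\hol(\bq_0)-\hol(\bq_0)$ automatically lies in $\ker\,\Res$ (resp.\ $W$) because $\varphi L=L$ forces $c_\varphi$ into the subspace modeling $L$. Thus $\varphi$ acts on $L$ by a translation, and the translation atlas descends.

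The only substantive ingredient is the identification of the $\Mod(S,\Sigma)$-action on $\ker\,\Res$ with the trivial action, via the naturality of \equ{eq: defn Res} and the convention that elements of $\Mod(S,\Sigma)$ fix $\Sigma$ pointwise; everything else follows directly from the fact that hol is a globally defined affine map on $\til\HH$.
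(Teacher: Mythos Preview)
Your proof is correct and follows essentially the same approach as the paper: define charts on leaves via $\bq\mapsto\hol(\bq)-\hol(\bq_0)$, and verify descent to $\hat\HH$ by showing that $\Mod(S,\Sigma)$ acts trivially on $\ker\,\Res$, using the convention that $\Sigma$ is fixed pointwise together with naturality of \equ{eq: defn Res}. The paper phrases descent as the equivariance $\hol(\varphi\bq)-\hol(\varphi\bq_0)=\hol(\bq)-\hol(\bq_0)$ (for all $\varphi$, not just those with $\varphi L=L$) rather than as deck transformations acting by translations, but the content is identical.
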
  

\begin{proof}
 A translation structure on a leaf $\mathcal{L}$
amounts to saying that there is a fixed vector space $V$ and an 
atlas of charts on $\mathcal{L}$ taking values in $V$, so that transition
maps are translations. We take $V = \ker \, \Res$ for the REL leaves,
and $V=W$ for the real REL leaves. For each 
$\bq_0 \in \til \HH$, the atlas is obtained by taking 
the chart $\bq \mapsto \hol(\bq)-\hol(\bq_0)$. By the definition of
the corresponding foliations, these are homeomorphisms in a
sufficiently small neighborhood of $\bq_0$, and the fact that
transition maps are translations is immediate. In order to check that
this descends to $\hat \HH$, let $\Lambda \subset \Mod (S, \Sigma)$ be
the finite-index torsion free subgroup so that $\hat \HH = \til \HH
/\Lambda$, and let $\varphi \in \Lambda$. 
We need to show that if  $\hat{\bq} = \bq \circ \varphi, \ \hat{\bq}_0 =
\bq_0 \circ \varphi,$  then $\hol(\bq)- \hol(\bq_0) = \hol(\hat{\bq}) -
\hol(\hat{\bq}_0)$. Since $\hol$ is $\Mod(S, \Sigma)$-equivariant,
this amounts to checking that $\varphi$ acts trivially on $\ker \,
\Res$. 
Invoking \equ{eq: defn Res}, this follows from our convention that any $\varphi \in \Mod(S,
\Sigma)$ fixes each point of $\Sigma$, so acts trivially on
$H^0(\Sigma)$.  
\end{proof}

%Proposition \ref{prop: independent of marking}
%shows that for two translation surfaces $q, \, q_1 \in \hat{\HH}$ in the
%same real REL leaf, the
%element $\hol(q_1)-\hol(q) \in W$ is well-defined. 
It is an interesting question to understand the geometry of individual
leaves. For the REL foliation this is a challenging problem, but for
the real REL foliation, our main theorems give a complete answer. 
%Our next goal is to discuss the gomet
%is to determine, given $\bq \in \til{\HH}$ and $\cc \in  W$,
%whether there is $\bq_1$ in the real REL leaf of $\bq$ such that
%$\hol(\bq_1) = \hol(\bq) +\cc$. 
Given a 
marked translation surface $\bq$ we say that a saddle connection $\delta$ for
$\bq$ is {\em horizontal} if $y(\bq, \delta)=0$. Note that for a generic
$\bq$ there are no horizontal saddle connections, and in any stratum
there is a uniform upper bound on their number. 

\begin{thm}\name{thm: real rel main}
Let $\bq \in \til \HH$ and let $\mathcal{V} \subset W$ such that for
any $\cc \in \mathcal{V}$ there is a 
path $\left\{\cc_t\right\}_{t \in [0,1]}$ in $\mathcal{V}$ from 0 to $\cc$
such that for any horizontal saddle connection
for $\bq$ and any $t \in [0,1]$, $\hol(\bq, \delta) + \cc_t (\delta)
\neq 0$. 
Then there is a continuous map $\psi: \mathcal{V} \to \til \HH$ such
that for any $\cc \in \mathcal{V}$, 
%\begin{enumerate}
%\item
\eq{eq: desired}{\hol(\psi(\cc)) = \hol(\bq) + (\cc,0),}
%\item
and the horizontal foliations of $\psi(\cc)$ is the same as that of
$\bq$. Moreover the image of $\psi$ is contained in the REL leaf of
$\bq$. 
%\end{enumerate}

%\begin{itemize}
%\item[(a)]
%$\mathcal{V}$ is convex and contains 0.
%\item[(b)] $\mathcal{U}$ is contractible.  
%\item[(c)]
%For any $(\bq, \cc) \in \mathcal{U} \times \mathcal{V}, $ for each
%horizontal saddle connection $\delta$ for $\bq$, $x(\bq, \delta) +
%\cc(\delta) \neq 0$. 

%\end{itemize}
%Then there is a continuous map $\mathcal{U} \times \mathcal{V} \to
%\til \HH$, which we denote by $(\bq, \cc) \mapsto \bq_{\cc}$, such
%that 
%\eq{eq: desired}{\hol(\bq_{\cc}) = \hol(\bq)+\cc.}
\end{thm}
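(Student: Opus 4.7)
The plan is to construct $\psi$ directly by inverting the holonomy map, via Theorem~\ref{thm: hol homeo}. Let $\GG$ denote the horizontal measured foliation of $\bq$. By the obvious symmetry between horizontal and vertical (which can be seen concretely by applying Theorem~\ref{thm: hol homeo} to $r_{-\pi/2}\bq$), $\hol$ restricts to a homeomorphism from the set $\til\HH(\GG)_{\mathrm{hor}}$ of marked translation surfaces whose horizontal foliation is topologically equivalent to $\GG$, onto a product $\BBB'(\GG) \times \AAA(\GG) \subset H^1(S,\Sigma;\R) \oplus H^1(S,\Sigma;\R)$. Here $\AAA(\GG)$ parametrizes non-atomic transverse measures on $\GG$ (the candidate $y$-classes) and $\BBB'(\GG)$ is the set of $x$-classes pairing with consistent sign against every element of $H^\GG_+$. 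I would then define
\[
\psi(\cc) \;=\; \hol^{-1}\!\bigl(\hol(\bq) + (\cc,0)\bigr),
\]
provided the target $\hol(\bq) + (\cc,0)$ lies in $\BBB'(\GG) \times \AAA(\GG)$.

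The $y$-component of the target is unchanged and equals $[\GG] \in \AAA(\GG)$, so the substantive task is to verify $x(\bq) + \cc \in \BBB'(\GG)$. Here I would argue by continuity along the path $\cc_t$ provided by the hypothesis. At $t=0$ the class $x(\bq)$ pairs positively with every element of $H^\GG_+$ by the easy direction of Theorem~\ref{thm: sullivan1}. Now, $H^\GG_+$ is the convex cone generated by two kinds of classes: asymptotic cycles and core curves of horizontal cylinders, which live in $H_1(S) \subset H_1(S,\Sigma)$; and positively oriented horizontal saddle connections, which are genuine relative classes. On classes of the first kind, any $\cc_t \in W \subset \ker\Res$ evaluates to $0$, so the pairing with $x(\bq) + \cc_t$ equals $x(\bq)(\alpha) > 0$ for all $t$. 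On a horizontal saddle connection $\delta$, the hypothesis $\hol(\bq,\delta) + \cc_t(\delta) \neq 0$ translates, since $y(\bq,\delta) = 0$, to $x(\bq,\delta) + \cc_t(\delta) \neq 0$ for all $t$; by continuity the sign is preserved along the path, so the pairing stays positive. Since positivity on the extreme rays implies positivity on the whole cone, $x(\bq) + \cc_t \in \BBB'(\GG)$ for all $t \in [0,1]$, and in particular for $t=1$.

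Granted this, continuity of $\psi$ is immediate since $\hol$ is a homeomorphism on $\til\HH(\GG)_{\mathrm{hor}}$. Equation \equ{eq: desired} is automatic, and the horizontal foliation of $\psi(\cc)$ equals $\GG$ as a measured foliation by the uniqueness part of Theorem~\ref{thm: hol homeo} (equivalently, Lemma~\ref{lem: fiber singleton} with horizontal and vertical swapped). Finally, to see that $\psi(\cc)$ lies in the REL leaf of $\bq$, note that $t \mapsto \psi(\cc_t)$ is a continuous path in $\til\HH$ along which $\hol(\psi(\cc_t)) - \hol(\bq) = (\cc_t,0) \in \ker\Res$, so the absolute periods are constant in $t$. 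The principal obstacle in the argument is the sign-consistency check for $x(\bq) + \cc_t$ on $H^\GG_+$, specifically for horizontal saddle connections: the contribution of absolute cycles is automatic because $\cc_t \in \ker\Res$, while the saddle-connection case is exactly what the collision-avoidance hypothesis on $\mathcal{V}$ was designed to control.
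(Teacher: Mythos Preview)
Your proposal is correct and follows essentially the same approach as the paper: invert $\hol$ via Theorem~\ref{thm: hol homeo} applied to the horizontal foliation $\GG$, and verify membership in $\BBB(\GG)$ by splitting $H^\GG_+$ into absolute cycles (where $\cc_t \in \ker\Res$ contributes nothing) and horizontal saddle connections (where the hypothesis plus continuity along the path preserves the sign). The paper's argument for REL-leaf containment is phrased slightly differently---it observes that in local $\hol$-charts the path $\psi(\cc_t)$ moves along REL plaques---but this is the same content as your observation that absolute periods are constant along the continuous path.
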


\begin{proof}
%Note that the assumption on $q$ is independent of the choice of $\bq
%\in \pi^{-1}(q)$, so 
%and by Proposition \ref{prop: independent of
%marking} 
%it suffices to prove the statement for $\bq$. 
Let $\FF$ and $\GG$ be the vertical and horizontal foliations of
$\bq$ respectively. We will apply Theorem \ref{thm: hol homeo}, reversing the roles
of the horizontal and vertical foliations of $\bq$; that is we use
$\GG$ in place of $\FF$. To do this, we will check that the map which
sends $\cc \in \mathcal{V}$ to  
$(x(\bq)+\cc, y(\bq)) \in H^1(S, \Sigma)^2
$
has its image in 
$\BBB(\GG) \times \AAA(\GG)$ (notation as in Theorem \ref{thm: hol
  homeo}), and thus 
$$\psi(\cc) = \hol^{-1}(x(\bq)+\cc, y(\bq))$$
is continuous and satisfies \equ{eq: desired}. 

Clearly $y(\bq)$, the cohomology class represented by $\GG$, is in
$\AAA(\GG)$. To check that $x(\bq)+\cc \in \BBB(\GG)$, we need to show
that for any element $\delta \in H^\GG_+$, $x(\bq, \delta) +
\cc(\delta)>0$. To see this, we treat separately the cases when
$\delta$ is a horizontal saddle connection, and when 
$\delta$ is represented by a foliation cycle, i.e. an
element of $H_1(S, \Sigma)$ which is in the image of $H_1(S)$ (and
belongs to the convex cone spanned by the asymptotic cycles). 
If $\delta$ is a foliation cycle, since
$\cc \in W \subset \ker \, \mathrm{Res}$, the easy direction in
Theorem \ref{thm: sullivan1} implies
$$
x(\bq, \delta) + \cc(\delta) = x(\bq, \delta)>0.
$$
If $\delta$ is represented by a saddle connection, let $\{\cc_t\}$ be
a path from 0 to $\cc$ in $\mathcal{V}$. Again, by the easy direction
in Theorem \ref{thm: sullivan1}, $x(\bq, \delta)>0$, and the function 
$x(\bq, \delta)+\cc_t(\delta)$ is a continuous function of $t$, which
does not vanish by hypothesis. This implies again that $x(\bq, \delta)
+\cc(\delta)>0$.

To check $\psi(\cc)$ is contained in the real
REL leaf of $\bq$, it suffices to note that according to \equ{eq:
  desired}, in any local chart provided
by hol, $\psi(\cc_t)$ moves along plaques of the foliation. 
\end{proof}

Since the leaves are modelled on $W$, taking $\mathcal{V}=W$ we obtain:
\begin{cor}
If $\bq$ has no horizontal saddle connections, then there is a
homeomorphism $\psi: W \to \til \HH$ 
onto the leaf of $\bq$. 
\end{cor}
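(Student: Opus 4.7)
The plan is to apply Theorem \ref{thm: real rel main} directly with $\mathcal{V} = W$. Because $\bq$ has no horizontal saddle connections, the path-existence hypothesis of the theorem is vacuously satisfied for every $\cc \in W$: any straight-line path $\cc_t = t\cc$ works since there are no horizontal saddle connections $\delta$ against which to check the condition $\hol(\bq, \delta) + \cc_t(\delta) \neq 0$. This produces a continuous map $\psi : W \to \til \HH$ with $\hol(\psi(\cc)) = \hol(\bq) + (\cc, 0)$ and image contained in the real REL leaf $\mathcal{L}$ of $\bq$.

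To see that $\psi$ is a homeomorphism onto $\mathcal{L}$, first note that $\psi$ is injective: if $\psi(\cc_1) = \psi(\cc_2)$, then equality of their hol images forces $(\cc_1,0) = (\cc_2,0)$, hence $\cc_1 = \cc_2$. Moreover, by Proposition \ref{prop: independent of marking}, the leaf $\mathcal{L}$ carries a translation structure modeled on $W$, with local charts given by $\bq' \mapsto \hol(\bq') - \hol(\bq)$. In any such chart $\psi$ is simply the inclusion of $W$ into $H^1(S,\Sigma;\R^2)$ as the real summand of $\ker \Res$, so $\psi$ is a local homeomorphism onto $\mathcal{L}$, and in particular $\psi(W)$ is open in $\mathcal{L}$.

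For surjectivity, I will show $\psi(W)$ is also closed in $\mathcal{L}$, and then invoke connectedness of the leaf. Suppose $\psi(\cc_n) \to \bq^* \in \mathcal{L}$; by continuity of hol, the vectors $(\cc_n, 0) = \hol(\psi(\cc_n)) - \hol(\bq)$ converge in $H^1(S,\Sigma;\R^2)$, so $\cc_n$ converges to some $\cc^* \in W$ (which is closed as a linear subspace). Continuity of $\psi$ then gives $\psi(\cc^*) = \bq^*$. Hence $\psi(W)$ is clopen and nonempty in the connected set $\mathcal{L}$, so $\psi(W) = \mathcal{L}$, and since $\psi$ is a continuous bijective local homeomorphism, it is a homeomorphism. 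The only subtle point is the verification of closedness, but this is immediate from continuity of hol and the fact that $W$ is finite-dimensional; no genuine obstacle arises because the hard analytic work has already been absorbed into Theorem \ref{thm: real rel main}.
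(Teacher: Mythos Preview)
Your argument is correct and follows the same approach as the paper: apply Theorem \ref{thm: real rel main} with $\mathcal{V}=W$, noting that the saddle-connection hypothesis is vacuous. The paper's proof is essentially the single sentence ``Since the leaves are modelled on $W$, taking $\mathcal{V}=W$ we obtain'' preceding the statement, leaving the verification that $\psi$ is a homeomorphism onto the leaf implicit; you have spelled this out via injectivity (from \equ{eq: desired}), the local-homeomorphism property in the translation charts of Proposition \ref{prop: independent of marking}, and a clopen argument for surjectivity.
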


Moreover the above maps are compatible with the transverse 
structure for the real REL foliation. Namely, let $\psi_\bq$ be the
map in Theorem \ref{thm: real rel main}. We have:

\begin{cor}\name{cor: foliation compatible}
For any $\bq_0 \in \til \HH$ there is a neighborhood
$\mathcal{V}$ of 0 in $W$, and a submanifold $\mathcal{U}' \subset
\til \HH$ everywhere transverse to real-REL leaves and containing
$\bq_0$ such that:
\begin{enumerate}
\item
For any $\bq \in \mathcal{U}'$, $\psi_{\bq}$ is defined on $\mathcal{V}$. 
\item
The map $\mathcal{U}' \times \mathcal{V}$ defined by $(\bq, \cc)
\mapsto \psi_{\bq}(\cc)$ is a homeomorphism onto its image, which is
a neighborhood $\mathcal{U}$ of $\bq_0$. 
\item
Each plaque of the real REL foliation in $\mathcal{U}$ is of the form
$\psi_{\bq} (\mathcal{V})$. 

\end{enumerate}

\end{cor}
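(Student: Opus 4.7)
The plan is to realize the map $(\bq, \cc) \mapsto \psi_\bq(\cc)$ as the composition of a linear translation with the inverse of a $\hol$-chart, exploiting the fact that real-REL plaques are affine in any such chart.

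First I would fix a geometric triangulation $\tau$ of $\bq_0$ and a neighborhood $\mathcal{U}_0$ of $\bq_0$ in $\til \HH_\tau$ on which $\hol$ is a homeomorphism onto an open set $\Omega \subset H^1(S, \Sigma; \R^2)$. Choose any linear complement $W' \subset H^1(S, \Sigma; \R^2)$ to $W$, and set
\[
\mathcal{U}' \;=\; \hol^{-1}\bigl((\hol(\bq_0) + W') \cap \Omega\bigr).
\]
Since $W \oplus W' = H^1(S, \Sigma; \R^2)$ and real-REL plaques in $\mathcal{U}_0$ are the $\hol$-preimages of affine translates of $W$, the submanifold $\mathcal{U}'$ is transverse to real REL and contains $\bq_0$. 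Next, pick a convex open neighborhood $\mathcal{V}$ of $0$ in $W$, and shrink $\mathcal{U}'$ if needed, so that the affine map
\[
T(\bq, \cc) \;=\; \hol(\bq) + (\cc, 0)
\]
sends $\mathcal{U}' \times \mathcal{V}$ into $\Omega$. Set $F(\bq, \cc) = \hol^{-1} \circ T(\bq, \cc)$.

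The crux is to identify $F$ with $\psi$. For this I would verify two points. First, that $\psi_\bq$ is defined on all of $\mathcal{V}$ for every $\bq \in \mathcal{U}'$, by applying Theorem~\ref{thm: real rel main} with the straight-line paths $\cc_t = t\cc$. The non-collapse condition reduces to $x(\bq,\delta) + t\cc(\delta)\neq 0$ for $t\in[0,1]$ and every horizontal saddle connection $\delta$ of $\bq$; after shrinking, this holds uniformly because horizontal saddle connections of $\bq$ vary continuously with $\bq$ and their horizontal holonomies are bounded below near $\bq_0$. Second, once $\psi_\bq(\cc)$ exists, it lies in $\mathcal{U}_0$ (by continuity of the construction) and satisfies $\hol(\psi_\bq(\cc)) = \hol(\bq) + (\cc,0) = \hol(F(\bq,\cc))$; injectivity of $\hol$ on $\mathcal{U}_0$ then forces $\psi_\bq(\cc) = F(\bq, \cc)$.

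With the identification $F = \psi$ established, the three conclusions follow from linear algebra in the chart. Property (i) is built into the construction of $\mathcal{V}$. For (ii), $T$ is the restriction to a product of open sets of the linear isomorphism $W' \oplus W \to H^1(S, \Sigma; \R^2)$, hence a homeomorphism onto an open neighborhood of $\hol(\bq_0)$ in $\Omega$; composing with $\hol^{-1}$ gives a homeomorphism onto a neighborhood $\mathcal{U}$ of $\bq_0$. For (iii), plaques of real REL in $\mathcal{U}_0$ are the $\hol$-preimages of $W$-translates, and $T^{-1}$ carries these to the fibers $\{\bq\} \times \mathcal{V}$, which map under $F$ onto $\psi_\bq(\mathcal{V})$. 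The main obstacle is the uniformity in the first step: finding a single $\mathcal{V}$ that works simultaneously for all $\bq \in \mathcal{U}'$. Horizontal saddle connections of $\bq$ need not coincide with those of $\bq_0$, and new ones may appear. The remedy is to bound from below the horizontal holonomies of all horizontal saddle connections of bounded length uniformly over a small $\mathcal{U}'$ using continuity of $\hol$, and to handle longer ones via the uniform boundedness of the functional $\cc \in \mathcal{V}$ on any fixed basis of $H_1(S,\Sigma;\R)$.
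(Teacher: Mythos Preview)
Your approach is essentially the paper's: pick a $\hol$-chart, take $\mathcal{U}'$ to be the preimage of an affine complement to $W$, and identify $\psi_\bq(\cc)$ with the chart-inverse of $\hol(\bq)+(\cc,0)$. The one place your write-up stumbles is the uniformity step. Your proposed remedy for ``longer'' horizontal saddle connections—bounding $\cc$ on a fixed basis of $H_1(S,\Sigma;\R)$—does not control $\cc(\delta)$, since a long saddle connection can have arbitrarily large coefficients in that basis. The correct observation (implicit in the paper, which just takes $\mathcal{V}$ to be a ball of radius $r/2$ where $r$ is a uniform lower bound on saddle-connection lengths over a bounded $\mathcal{U}_0$) is that $\cc\in W\subset\ker\Res$ vanishes on absolute cycles, so $\cc(\delta)$ depends only on the endpoints of $\delta$ in $\Sigma$; hence $|\cc(\delta)|$ is bounded by a fixed linear function of $\|\cc\|$, uniformly over \emph{all} horizontal saddle connections of \emph{all} $\bq$. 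With that fix your argument goes through and matches the paper's.
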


\begin{proof}
Let $\mathcal{U}_0$ be a bounded neighborhood of $\bq_0$ on which hol is
a local homeomorphism and let $\mathcal{U}' \subset \mathcal{U}_0$ be
any submanifold everywhere transverse to the real REL leaves and of
complementary dimension. For example we can take $\mathcal{U}'$ to be
the pre-image under hol of a small ball around $\hol(\bq_0)$ in an
affine subspace of $H^1(S, \Sigma; \R^2)$ which is complementary to
$W$. 

Since $\mathcal{U}_0$ is bounded there is a
uniform lower bound $r$ on the lengths of saddle connections for $\bq \in
\mathcal{U}_0$. If we let $\mathcal{V}_0$ be the ball of radius $r/2$
around 0 in
$W$, then the conditions of Theorem \ref{thm: real rel main} are 
satisfied for $\mathcal{V}_0$ and any $\bq \in
\mathcal{U}'$. Thus (1) holds for any $\mathcal{V} \subset
\mathcal{V}_0$. Taking for $\mathcal{V}$ a small ball around 0 we can assume that
$\Psi_{\bq}(\mathcal{V}) \subset \mathcal{U}_0$ for any
$\bq$. From \equ{eq: desired} and the choice of $\mathcal{U}'$ it
follows that $\{\psi_{\bq}(\cc): \bq \in \mathcal{U}', \cc \in
\mathcal{V}\}$ is a neighborhood of $\hol(\bq_0)$ in $H^1(S, \Sigma;
\R^2)$. Assertions (2),(3) now follow from the fact that
$\hol|_{\mathcal{U}_0}$ is a homeomorphism. 
\end{proof}

\ignore{

Suppose $\mathcal{K} \times \mathcal{V} \subset 
\mathcal{U} \times \mathcal{V}$ is compact. Since $\hol: \til \HH \to
H^1(S, \Sigma; \R^2)$ is continuous, the image $\hol(\mathcal{K})$ is
compact, so there are triangulations 
$\tau_1, \ldots, \tau_k$ of $(S, \Sigma)$ such that
$$\hol(\mathcal{U}_0 \times \mathcal{V}_0) \subset \bigcup_1^k \hol
(\til \HH_{\tau_i}).$$
Since $\hol|_{\til \HH_{\tau_i}}$ is a homeomorphism for each $i$, we
can choose

 $\bq_1$ is in the leaf of $\bq$. For this, we
repeat the above proof to construct, for any $s \in [0,1]$, a
translation surface $\bq(s)$ such that $\hol(\bq(s))-\hol(\bq)=s\cc$,
and such that the horizontal foliation of each $\bq(s)$ is $\GG$. For
each $s_0$ there is a neighborhood of $\bq(s_0)$ in $\til \HH$ in
which the real REL leaf is well-defined, and in particular there is
an open interval $I_{s_0}$ containing $s_0$ so that for $s \in [0,1] \cap
I_{s_0}$, there is $\bq'(s)$ in the same leaf as $\bq(s_0)$ such
that $\hol(\bq'(s)) - \hol(\bq(s_0)) = (s-s_0)\cc$. By Lemma \ref{lem: fiber
singleton}, we must 
have $\bq(s) = \bq'(s)$. Taking a finite subcover from the cover
$\{I_s\}$ of $[0,1]$ we see
that along the entire path, the surfaces $\bq(s)$ belong to the same
real REL leaf, as required. 
}

\begin{proof}[Proof of Theorem \ref{thm: real rel action}]
Let $\hat{\HH}, \, \til \HH$ be as above, let $\pi: \til \HH \to \hat{\HH}$ be the
projection, let $\til Q$ denote the subset of translation surfaces in $\til{\HH}$
without horizontal saddle connections, and let $Q =
\pi(\til Q)$. Note that $Q$ and $\til Q$ are $B$-invariant, where $B$
is the subgroup of upper triangular matrices in $G$, acting on $\hat{\HH}, \,
\til \HH$ in the usual way. We will extend the $B$-action to an action
of $F = B \ltimes W$. 

The action of $W$ is defined as follows. For each $\bq \in \til Q$,
the conditions of Theorem 
\ref{thm: real rel main} are vacuously satisfied for $\mathcal{V} =
W$, and we define $\cc \bq =
\psi_{\bq}(\cc)$. 
%and this furnishes
%us with $\bq_1$ in the real REL leaf of $\bq$, for which 
%$\hol(\bq_1)-\hol(\bq)=\cc$. Moreover, since $\bq$ and $\bq_1$ share
%the same horizontal measured foliation, $\bq_1$ is uniquely
%determined by Lemma \ref{lem: fiber singleton}. 
We first prove the group action law
\eq{eq: group law}{
(\cc_1+\cc_2)\bq = \cc_1(\cc_2
\bq)
} for the action of $W$. This follows from \equ{eq: desired}, associativity of
addition in $H^1(S, \Sigma; \R^2)$, and the uniqueness in Lemma
\ref{lem: fiber singleton}. 
Thus 
we have defined an
action of $W$ on $\til Q$, and by Proposition \ref{prop: independent of
marking} this descends to a well-defined action on $Q$.
% For each
%individual $\bq$, the orbit map 
%$W \to \til Q, \ \cc \mapsto \cc \bq$
%is continuous by
%Theorem \ref{thm: real rel main}. 
To see that the action map $W \times
\til Q \to \til Q
%, \ \ \ (\cc, \bq) \mapsto \cc \bq
$
is continuous, 
%we repeatedly use Corollary \ref{cor:
%  foliation compatible}. Namely for 
we take $w_n \to w$ in $W$, $\bq^{(n)} \to
\bq$ in $\til Q$, and need to show that 
\eq{eq: induction}{
w_n \bq^{(n)} \to  w\bq.
} 
%To
%this end let $\bq_t = (tw)\bq$ for $t \in [0,1]$. 
Corollary \ref{cor:
  foliation compatible} implies that for any $t$ there are
neighborhoods $\mathcal{U}$ of $(tw)\bq$ and $\mathcal{V}$ of 0 in $W$
such that the map 
$$
\mathcal{U} \times \mathcal{V} \to \til \HH, \ \ (\bq, \cc) \mapsto
\cc \bq
$$
is continuous. By compactness we can find 
$\mathcal{U}_i, i=1, \ldots, k$, a partition $0 = t_0 < \cdots <
t_k = 1$, and a fixed open $\mathcal{V} \subset W$ such that
\begin{itemize}
\item
$\{(tw)\bq : t \in [t_{i-1}, t_i]\} \subset \mathcal{U}_i$.
\item
$(t_i -t_{i-1})w \in \mathcal{V}$. 
\end{itemize}

%For each $n$ let %$\left\{tw^{(n)}: t \in [0,1]\right\}$ be the
%line segment in $W$ from 0 to $w_n$ and 
%$\bq^{(n)}_t = (tw)\bq^{(n)}.$
It now follows by induction on $i$ that 
%for all
%sufficiently large $n$, $t_i w^{(n)} \bq^{(n)} \in \mathcal{U}_i$ and
$t_i w^{(n)} \bq_n \to (t_iw) \bq$ for each $i$, and putting $i=k$ we
get \equ{eq: induction}.

The action is
affine and measure preserving since in the
local charts given by $H^1(S, \Sigma; \R^2)$, it is defined by vector
addition. Since the area of a surface can be computed in terms of its
absolute periods alone, this action preserves the subset of area-one
surfaces. A simple 
calculation using \equ{eq: G action} shows that for $\bq \in \til Q, \,
\cc \in W$ and  
$$g = \left(\begin{matrix} a & b \\ 0 & a^{-1} \end{matrix} \right) \in
B, 
$$
we have
$g \cc \bq = (a \cc) g \bq$. That is, the actions of $B$ and $W$
respect the commutation relation defining $F$, so that we have defined
an $F$-action on $\til Q$. To check continuity of the action, let $f_n
\to f$ in $F$ and $\bq_n \to \bq \in \til Q$. Since $F$ is a
semi-direct product we can write $f_n = w_n b_n$, where $w_n \to w$
in $W$ and $b_n \to b$ in $B$. Since the $B$-action is
continuous, $b_n \bq_n \to b\bq$, and since (as verified above) the
$W$-action on $\til Q$ is continuous, 
$$f_n \bq_n = w_n (b_n \bq_n) \to
w (b \bq) = f\bq.$$ 
\end{proof}

\section{Cones of good directions}
\name{section: REL}
Suppose $\sigma$ is an irreducible and admissible permutation, and let
\eq{eq: defn cone}{
\CC_{\A} = \{\B \in \R^d: (\A, \B) \mathrm{\ is \ positive } \}
\subset T_{\A} \R^d_+.
}
As we showed in Corollary \ref{cor: same gamma}, this is the set of
good directions at the tangent space of $\A$, 
i.e. the directions of lines which may be lifted to horocycles. 
In this section we will relate the cones $\CC_{\A}$ 
with the bilinear form $Q$ as in \equ{eq: defn Q1}, and show there are
`universally good' directions for $\sigma$, i.e. specify
certain $\B$ such that $\B \in \CC_\A$ for  
all $\A$ which are without connections. We will also find `universally
bad' directions, i.e. directions which do not belong to $\CC_{\A}$ for
any $\A$; these will be seen to be related to real REL. 

Set
$\CC_{\A}^+ = \{\B \in \R^d: \mathrm{\equ{eq: positivity} \ holds} \}$, so
that $\CC_{\A} \subset \CC_{\A}^+$ for all $\A$, and $\CC_{\A}^+ = \CC_{\A}$ when
$\A$ has no connections. Now let
\eq{defn of C}{
\CC = \left\{\B \in \R^d: \forall i, \, Q(\E_i, \B)>0 \right \}. 
}
We have:
\begin{prop}
\name{prop: characterization of cone}
$\CC$ is a nonempty open convex cone, and 
%$\CC \subset \CC'
%\subset \cl{\CC}$. In particular 
$\CC = \bigcap_{\A} \CC_{\A}^+.$  

\end{prop}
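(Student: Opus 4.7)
The plan is to decompose the claim into three items: (a) $\CC$ is an open convex cone, (b) $\CC$ is nonempty, and (c) $\CC = \bigcap_\A \CC_\A^+$. Items (a) and (b) are bookkeeping. For (a), each map $\B \mapsto Q(\E_i, \B)$ is a linear functional on $\R^d$, so $\CC$ is the intersection of $d$ open half-spaces defined by strict homogeneous inequalities and is therefore an open convex cone. For (b) I would invoke Masur's construction (Figure \ref{figure: Masur}): the explicit choice $b_i := \sigma(i)-i$, combined with irreducibility of $\sigma$, yields heights $y_i>0$ and $y'_i<0$ for $1\leq i \leq d-1$, and a direct computation matching the intersection form of \S \ref{subsection: intersection} against $Q$ shows that these polygon-positivity conditions are equivalent to $Q(\E_i,\B)>0$ for every $i$.

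For the easy inclusion $\CC \subseteq \bigcap_\A \CC_\A^+$, equation \equ{eq: relation L Q} reads $L(x) = Q(\E_i, \B)$ for $x \in I_i$, so $\B \in \CC$ makes $L$ strictly positive everywhere on $I_\A$. Integrating against any $\mu \in \MM_\A$ yields $\int L \, d\mu > 0$, whence $\B \in \CC_\A^+$ for every $\A \in \R^d_+$.

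The substantive direction is $\bigcap_\A \CC_\A^+ \subseteq \CC$, which I would prove contrapositively. Given $\B$ with some $Q(\E_{i_0}, \B) \leq 0$, the goal is to exhibit $\A$ and $\mu \in \MM_\A$ with $\int L\,d\mu \leq 0$. The natural test is normalized Lebesgue $\lambda$ on $I_\A$, which is $\IE_\sigma(\A)$-invariant and non-atomic, hence in $\MM_\A$ for every $\A \in \R^d_+$. A direct computation gives
\[
\int L \, d\lambda \;=\; \frac{1}{\sum_k a_k}\sum_k a_k\,Q(\E_k,\B),
\]
so taking $\A$ with $a_{i_0}$ dominant and the other $a_j$ small drives this quantity toward $Q(\E_{i_0},\B)\leq 0$; when $Q(\E_{i_0},\B)<0$ strictly, a sufficiently concentrated $\A$ produces $\int L \, d\lambda < 0$ and witnesses $\B \notin \CC_\A^+$.

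The hard part will be the boundary case $Q(\E_{i_0},\B)=0$ with all other $Q(\E_k,\B)$ strictly positive, for which the Lebesgue integral above remains strictly positive on every $\A \in \R^d_+$. To close this gap I plan to locate $\A$ on the codimension-one relation $x'_{\sigma(i_0)-1}=x_{i_0-1}$ inside $\R^d_+$, which is precisely the condition that $\IE_\sigma(\A)(I_{i_0}) = I_{i_0}$ setwise; when it holds, $\IE_\sigma(\A)$ acts as the identity on $I_{i_0}$, so normalized Lebesgue restricted to $I_{i_0}$ is a non-atomic invariant probability measure in $\MM_\A$. Irreducibility of $\sigma$ ensures both sides of this linear relation involve positive terms, so the relation meets $\R^d_+$ nontrivially. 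For such an $\A$ and this $\mu$, one computes $\int L\,d\mu = Q(\E_{i_0},\B) = 0$, which violates the strict inequality \equ{eq: positivity}; so $\B \notin \CC_\A^+$, completing the contrapositive.
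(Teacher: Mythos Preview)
Your handling of (a), (b), and the inclusion $\CC \subseteq \bigcap_\A \CC_\A^+$ is correct and matches the paper. For the reverse inclusion you are more careful than the paper: the paper's argument for this direction is a one-line passage via the measure-vector $\A'=(\mu(I_j))_j$, which as written assumes $\B\notin\CC_\A^+$ and deduces $\B\notin\CC$---i.e.\ it re-establishes the easy inclusion rather than the stated one. Your strict case (some $Q(\E_{i_0},\B)<0$, Lebesgue on an $\A$ near $\E_{i_0}$) is fine.

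The boundary-case construction, however, has a real gap. The relation $x'_{\sigma(i_0)-1}=x_{i_0-1}$ rewrites, after cancelling common terms, as
\[
\sum_{j<i_0,\ \sigma(j)>\sigma(i_0)} a_j \;=\; \sum_{k>i_0,\ \sigma(k)<\sigma(i_0)} a_k,
\]
and irreducibility does \emph{not} force both index sets to be nonempty. For $i_0=1$ the left sum is vacuous while irreducibility gives $\sigma(1)>1$, so the right sum is strictly positive and the relation is unsolvable in $\R^d_+$; the case $i_0=d$ is symmetric, and interior $i_0$ can fail too (e.g.\ $d=4$, $\sigma=(2,4,1,3)$, $i_0=2$). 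Worse, the inclusion itself appears to fail on $\partial\CC$: for $\sigma=(d,d-1,\ldots,1)$ and $\B=\E_1$ one checks $Q(\E_1,\B)=0$ and $Q(\E_j,\B)=1$ for $j>1$, so $\B\notin\CC$; yet for every $\A\in\R^d_+$ and every $\mu\in\MM_\A$ one has $\int L\,d\mu = 1-\mu(I_1)$, and no non-atomic invariant measure can have $\mu(I_1)=1$ because $\IE|_{I_1}$ is translation by $a_2+\cdots+a_d>0$, so $\bigcap_{n\ge 0}\IE^{-n}(I_1)=\varnothing$. Thus $\B\in\bigcap_\A\CC_\A^+\setminus\CC$, and no argument along your lines can close the boundary case.
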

\begin{proof}
It is clear that $\CC$ is open and convex. It follows from 
\equ{eq: relation L Q} that
$$\CC
= \left \{ \B \in \R^d: \forall x \in 
I, \, \forall \A \in \R^d_+, \, L_{\A,\B}(x)>0 \right \}
.
$$
The irreducibility of 
$\sigma$ implies that $\B_0=(b_1, \ldots, b_d)$ defined by 
$b_i = \sigma(i) - i$ (as in \cite{Masur-Keane})
satisfies $y_i(\B_0) > 0 > y'_i(\B_0)$ for all $i$, so 
by \equ{eq: defn L}, $L_{\A, \B_0}$
is everywhere positive irrespective of $\A$. 
This shows that $\B_0$ belongs to $\CC$, and moreover that $\CC$ is
contained in $\CC^+_{\A}$ for any $\A$. 

For the inclusion $\bigcap_{\A} \CC^+_{\A} \subset \CC$, 
suppose $\B \notin \CC_{\A}^+$, 
so that for some $\mu \in \MM_{\A}$ we have $\int L \, d\mu \leq 0.$
Writing $\A' = (a'_1, \ldots, a'_d)$, where $a'_j = \mu(I_j)$, we have
$$
Q(\A', \B) =\sum a'_i Q(\E_i, \B) =\int L \, d\mu \leq 0,$$
so that $\B
\notin \CC.$ 
\end{proof}

Note that in the course of the proof of Theorem \ref{thm: mahler
  curve}, we actually showed that
$\A'(s) \in -\CC$ for all $s>0$. Indeed, given a curve $\{\alpha(s)\}
\subset \R^d_+$, the easiest way to show that $\alpha(s)$ is uniquely
ergodic for a.e. $s$, is to show that $\alpha'(s) \in \pm \CC$
for a.e. $s$.

\medskip

Let $\mathbf{R}$ denote the null-space of $Q$, that is
$$\mathbf{R} = \{\B \in \R^d : Q(\cdot, \B) \equiv 0\} 
.$$

\begin{prop}
\name{prop: description REL}
$\mathbf{R} = \R^d \sm \bigcup_{\A} \pm \CC_{\A}.$
\end{prop}

\begin{proof}
By \equ{eq: relation L Q}, $\mathbf{R} = \{\B \in \R^d: L_{\A,\B} (x)
\equiv 0\},$ so that
containment $\subset$ is clear. Now suppose $\B \notin \mathbf{R}$, that
is $Q(\E_i, \B) \neq 0$ for some $i$, and by continuity there is an open
subset $\mathcal{U}$ of $\R^d_+$ such that $Q(\A, \B) \neq 0$ for $\A
\in \mathcal{U}$. Now taking $\A \in \mathcal{U}$ which is uniquely
ergodic and without connections we have $\B \in \pm \CC_{\A}.$ 
\end{proof}

Consider the map $(\A, \B) \mapsto \bq(\A, \B)$ 
defined in Theorem \ref{thm: sullivan2}. It is easy to see that the
image of an open subset of $\R^d_+ \times \{\B\}$ is a plaque for the real foliation. 
Additionally, recalling that $Q(\A, \B)$ records the intersection
pairing on $H_1(S, \Sigma) \times H_1(S \sm \Sigma)$, and that the
intersection pairing gives the duality $H_1(S\sm \Sigma) \cong H^1(S,
\Sigma)$, one finds that the image of $\mathbf{R} \times \{\B\}$ is a
plaque for the real REL foliation. That is, 
Proposition \ref{prop: description REL} says that the tangent
directions in $T\R^d_+$ which can never be realized as horocycle
directions, are precisely the directions in the real REL leaf. 

\combarak{Check that you agree. Along the way I had to use the
symmetry of $Q$ one more time, i.e. $\mathbf{R} = \{\A: Q(\A, \cdot)
\equiv 0\}$ and I am not too happy about it. This is related to my
question at the end of \S 2.4}

\ignore{
\section{Stable foliation for geodesics, horocycles, and real REL} 
This section contains some observations regarding the objects we have
considered, and some open questions. 

Recall that the action of $\{g_t\}$ on $\HH$ is called the {\em
geodesic flow}. Dynamically, the real foliation defined via 
\equ{eq: splitting} may be regarded as the unstable
foliation for this flow (see \cite{Veech geodesic flow}). This
foliation contains two sub-foliations

UNFINISHED

Let $\mathbf{R}$ denote the null-space of $Q$, that is
$$\mathbf{R} = \left\{\B \in \R^d : Q(\cdot, \B) \equiv 0 \right\} 
.$$

\begin{prop}
\name{prop: description REL}
$\mathbf{R} = \R^d \sm \bigcup_{\A} \pm \CC_{\A}.$
\end{prop}

\begin{proof}
We have $\mathbf{R} = \{\B \in \R^d: L_{\A,\B} (x) \equiv 0\}$
(as before the condition for $L_{\A, \B} \equiv 0$ does not depend on
$\A$), so that
containment $\subset$ is clear. Now suppose $\B \notin \mathbf{R}$, that
is $Q(\E_i, \B) \neq 0$ for some $i$, and by continuity there is an open
subset $\mathcal{U}$ of $\R^d_+$ such that $Q(\A, \B) \neq 0$ for $\A
\in \mathcal{U}$. Now taking $\A \in \mathcal{U}$ which is uniquely
ergodic and without connections we have $\B \in \pm \CC_{\A}.$ 
\end{proof}
}

\end{document}